\numberwithin{equation}{section}
\theoremstyle{plain}
\newtheorem{thm}{Theorem}[section]
\newtheorem{lemm}[thm]{Lemma}
\newtheorem{prop}[thm]{Proposition}
\newtheorem{customthm}{Theorem}
\theoremstyle{definition}
\newtheorem{defn}[thm]{Definition}
\newtheorem{cons}[thm]{Construction}
\newtheorem{remk}[thm]{Remarks}
\newcommand{\ol}[1]{\overline{#1}}
\newcommand{\wh}[1]{\widehat{#1}}
\newcommand{\Aut}{\operatorname{Aut}}
\newcommand{\Br}{\operatorname{Br}}
\newcommand{\Coker}{\operatorname{Coker}}
\renewcommand{\div}{\operatorname{div}}
\newcommand{\Div}{\operatorname{Div}}
\newcommand{\dtype}{\mathbf{dtype}}
\newcommand{\DType}{\operatorname{DType}}
\newcommand{\et}{\textnormal{\'et}}
\newcommand{\ev}{\operatorname{ev}}
\newcommand{\Ext}{\operatorname{Ext}}
\newcommand{\Gal}{\operatorname{Gal}}
\newcommand{\gr}{\operatorname{gr}}
\newcommand{\Hom}{\operatorname{Hom}}
\newcommand{\hor}{\operatorname{hor}}
\newcommand{\id}{\operatorname{id}}
\newcommand{\Img}{\operatorname{Img}}
\newcommand{\inn}{\operatorname{inn}}
\newcommand{\Inn}{\operatorname{Inn}}
\newcommand{\Ker}{\operatorname{Ker}}
\newcommand{\lien}{\mathbf{lien}}
\newcommand{\Mor}{\operatorname{Mor}}
\newcommand{\Out}{\operatorname{Out}}
\newcommand{\Pic}{\operatorname{Pic}}
\newcommand{\pr}{\operatorname{pr}}
\newcommand{\SAut}{\operatorname{SAut}}
\newcommand{\Spec}{\operatorname{Spec}}
\newcommand{\SOut}{\operatorname{SOut}}
\newcommand{\trans}{\mathbf{trans}}
\newcommand{\triv}{\operatorname{triv}}
\newcommand{\type}{\mathbf{type}}
\newcommand{\Unit}{\operatorname{U}}
\newcommand{\UPic}{\operatorname{UPic}}
\newcommand{\ver}{\operatorname{ver}}
\newcommand{\xtype}{\mathbf{xtype}}
\newcommand{\iDiv}{\mathscr{D}\kern -.5pt iv}
\newcommand{\iExt}{\mathscr{E}\kern -.5pt xt}
\newcommand{\iHom}{\mathscr{H}\kern -.5pt om}
\newcommand{\Gbb}{\mathbb{G}}
\newcommand{\Rbb}{\mathbb{R}}
\newcommand{\Dcal}{\mathcal{D}}
\newcommand{\Gcal}{\mathcal{G}}
\newcommand{\Fcal}{\mathcal{F}}
\renewcommand{\H}{\mathrm{H}}
\newcommand{\Z}{\mathrm{Z}}
\newcommand{\Hscr}{\mathscr{H}}
\DeclareSymbolFont{cyrletters}{OT2}{wncyr}{m}{n}
\DeclareMathSymbol{\Be}{\mathalpha}{cyrletters}{"42}
\DeclareMathSymbol{\Sha}{\mathalpha}{cyrletters}{"58}
\newcommand{\sqdot}{\mathbin{\vcenter{\hbox{\rule{.3ex}{.3ex}}}}}
\title{Non-abelian descent types}
\author{Nguy$\tilde{\hat{\text{e}}}$n M$\d{\text{a}}$nh Linh}
\date{\today}
\begin{document}
\maketitle

\begin{abstract}
    We present the notion of non-abelian descent type, which classifies torsors up to twisting by a Galois cocycle. This relies on the previous construction of kernels and non-abelian Galois 2-cohomology due to Springer and Borovoi. The necessity of descent types arises in the context of the descent theory where no torsors are given {\em a priori}, for example, when we wish to study the arithmetic properties such as the Brauer--Manin obstruction to the Hasse principle on homogeneous spaces without rational points. This new definition also unifies the types by Colliot-Th\'el\`ene--Sansuc, the extended types by Harari--Skorobogatov, and the finite descent type by Harpaz--Wittenberg.
\end{abstract}

{\em Keywords.} Galois cohomology, torsors, kernels, non-abelian cohomology.

\tableofcontents

\section{Introduction} \label{sec:Introduction}

\subsection{Context of the problem} \label{subsec:Context}

Let $X$ be a (smooth and geometrically integral) variety defined over a number field $k$. A fundamental question in arithmetic geometry is to ask whether $X(k) \neq \varnothing$. An obvious necessary condition is that the set $X(k_\Omega):=\prod_{v \in \Omega}X(k_v)$ is non-empty, where $\Omega$ denotes the set of places of $k$. If this condition is also sufficient (that is, if either $X(k_\Omega) = \varnothing$ or $X(k) \neq \varnothing$), we say that $X$ satisfies the {\em Hasse principle}. This is the case, for example, when $X$ is a projective quadric hypersurface (Hasse--Minkowski) or a principal homogeneous space of a simply connected semisimple linear algebraic group (Kneser--Harder--Chernousov). In general, the Hasse principle does not always hold; counterexamples exist already within the class of projective cubic surface or that of principal homogeneous spaces of a torus.

In 1971, Manin \cite{Manin1971Brauer} introduced a pairing between $X(k_\Omega)$ and the Brauer--Grothendieck group $\Br(X):=\H^2_{\et}(X,\Gbb_m)$. By the global reciprocity law (Albert--Brauer--Hasse--Noether), this pairing cuts out a subset $X(k_\Omega)^{\Br}$ of $X(k_\Omega)$ containing (the diagonal image) of $X(k)$. The emptiness of $X(k_\Omega)^{\Br}$ is often referred as the {\em Brauer--Manin obstruction to the Hasse principle}. It is natural to ask if this obstruction is the only one ({\em i.e.}, if the non-vacuity of the Brauer--Manin set $X(k_\Omega)^{\Br}$ is sufficient for the existence of $k$-rational points on $X$). This is still too much to expect. Indeed, Skorobogatov constructed a bielliptic surface which is a counterexample to the Hasse principle not explained by the Brauer--Manin obstruction \cite{Skorobogatov1999Beyond}. 

A conjecture of Colliot-Th\'el\`ene predicts that the Brauer--Manin obstruction is the only for smooth proper {\em rationally connected} (a notion due to Campana, Koll\'ar, Miyaoka, and Mori) varieties (see \cite{CT2003Rational}). Examples of rationally connected varieties are (geometrically) unirational varieties such as homogeneous spaces of connected linear algebraic groups. A method to attack Colliot-Th\'el\`ene conjecture is the {\em descent theory}. Its idea is to reduce an arithmetic problem (such as the Brauer--Manin obstruction) on $X$ to the same problem on its {\em descent varieties}. More precisely, let $f: Y \to X$ be a {\em torsor} under a linear algebraic group $G$, that is, $Y$ is equipped with a right action $Y \times_k G \to Y$ of $G$ such that the morphism $f$ is fppf and $G$ acts simply transitively on its fibres. For each Galois cocycle $\sigma: \Gal(\bar{k}/k) \to G(\bar{k})$, we may perform an operation called {\em twisting} to obtain a torsor $f^{\sigma}: Y^{\sigma} \to X$ under an inner $k$-form $G^{\sigma}$ of $G$ (see paragraph \ref{subsec:Notation} below). The set of $k$-rational points of $X$ has the following partition (see {\em e.g.} \cite[p. 22]{Skorobogatov2001Torsors}):
    \begin{equation*}
        X(k) = \bigsqcup_{[\sigma] \in \H^1(k,G)} f^{\sigma}(Y^{\sigma}(k)),
    \end{equation*}
where the notation $[\sigma] \in \H^1(k,G)$ means $[\sigma]$ is a cohomology class represented by a Galois cocycle $\sigma: \Gal(\bar{k}/k) \to G(\bar{k})$. If we could show a similar identity for the Brauer--Manin sets, namely
    \begin{equation*}
        X(k_\Omega)^{\Br} = \ol{\bigcup_{[\sigma] \in \H^1(k,G)} f^{\sigma}(Y^{\sigma}(k_\Omega)^{\Br})},
    \end{equation*}
in the topological space $X(k_\Omega)$ (equipped with the product of $v$-adic topologies), then the Brauer--Manin obstruction to the Hasse principle is the only obstruction for $X$, as soon as it is the case for the twisted torsors $Y^{\sigma}$. The problem with this approach is that sometimes we need descent theory when no torsors are given {\em a priori}. In other words, the {\em existence} of torsors satisfying certain conditions has to be taken into account. For example, if $X$ is a homogeneous space of a connected linear algebraic group $G'$, then we do not have a torsor $G' \to X$ unless $X$ already has a $k$-rational point! Worse, the stabilizer $\bar{G}$ of a geometric point $x \in X(\bar{k})$ needs not be defined over $k$.

The above difficulty has motivated the author to define a notion weaker than that of torsor, the {\em descent type}, which is the main subject of study in this article. {\em Grosso modo}, a descent type $\lambda$ on $X$ consists of a Galois equivariant torsor $f: \bar{Y} \to X \times_k \bar{k}$ under an algebraic $\bar{k}$-group $\bar{G}$ and a class $\eta$ in a non-abelian 2-cohomology set. We want our definition of descent type to entail the following features.
\begin{itemize}
    \item The class $\eta$ is the obstruction to lifting $f$ to a torsor $Y \to X$ under a $k$-form $G$ of $\bar{G}$.

    \item Two $X$-torsors of the same descent type $\lambda$ differ by twisting by a Galois cocycle.

    \item If $X(k) \neq \varnothing$, then any descent type can be lifted to a torsor.
\end{itemize}

In the original descent theory of Colliot-Th\'el\`ene--Sansuc \cite{CTS1987Descente}, the object having the above properties is the {\em type}. There, they consider torsors under a torus $T$, with the assumption $\bar{k}[X]^\times = \bar{k}^\times$ on $X$. A type is then a Galois equivariant homomorphism from the character module of $T$ to the geometric Picard group $X$, or equivalently, an element of $\H^0(k,\H^1(X \times_k \bar{k}, T  \times_k \bar{k}))$. Later in \cite{HS2013Descent}, Harari and Skorobogatov developped an ``open descent theory'' for torsors under a group of multiplicative type, without any assumption on the invertible functions on $X$. There, they introduced the notion of {\em extended type}, which is a morphism in the derived category of Galois modules. Very recently in \cite{HW2022Supersolvable}, Harpaz and Wittenberg deployed the notion of {\em finite descent type} for their work on supersolvable descent, which is the first non-abelian definition. The descent type constructed in this paper unifies these three definitions.

\subsection{Notations and conventions} \label{subsec:Notation}

The followings conventions shall be deployed throughout the article.

{\em Fields.} For most of the cases, we shall consider a perfect field $k$ and a fixed algebraic closure $\bar{k}$ of it. We denote by $\Gamma_k$ the absolute Galois group $\Gal(\bar{k}/k)$. By convention, a {\em $k$-variety} a separated $k$-scheme of finite type. When $U$ is a $k$-scheme, we denote by $X_U$ the base change $X \times_k U$. We usually write $\bar{X}:=X \times_k \bar{k}$ (but the notation $\bar{X}$ does not always mean $X$ is the base change to $\bar{k}$ of a $k$-variety). For each $s \in \Gamma_k$, we have an $X$-automorphism $\id_X \times_k \Spec(s^{-1}): \bar{X} \to \bar{X}$, which is usually denoted by $s_{\#}$. It induces a bijection on the set $\bar{X}(S)$ for any $\bar{k}$-scheme $S$, denoted by $x \mapsto \tensor[^s]{x}{}$.

{\em Cohomology.} Unless stated otherwise, every (hyper-)cohomology groups (and sets) shall be {\em \'etale} or {\em Galois}. By abuse of notation, any object of an abelian category is identified to a 1-term complex concentrated in degree $0$. When $k$ is a field, every sheaf of abelian groups $\Fcal$ on the small \'etale site $\Spec(k)_{\et}$ shall be abusively identified to the $\Gamma_k$-module $\Fcal(\bar{k}):=\varinjlim_{K} \Fcal(K)$, where $K$ runs through the finite (or equivalently, finite Galois) extensions of $k$. We also use $\Dcal^+(k)$ to denote the derived category of bounded below $\Gamma_k$-modules.

{\em Torsors.} Let $G$ be a {\em smooth} algebraic group over a perfect field $k$. If $X$ is a non-empty $k$-variety, a (right) $X$-torsor under $G$ is a $k$-variety $Y$ equipped with an action $Y \times_k G \to Y$ as well as an fppf and $G$-invariant morphism $f: Y \to X$, such that $G(\bar{k})$ acts simply transitively on the fibres $f^{-1}(x)$ for all $x \in X(\bar{k})$. Such a torsor gives rise to a class $[Y] \in \H^1(X,G)$ in the pointed $\check{\textrm{C}}$ech cohomology set for the \'etale topology (see \cite[Chapter III, \S 4]{Milne1980Etale} or \cite[p. 18]{Skorobogatov2001Torsors}), and $[Y] = [Y']$ if and only if $Y$ and $Y'$ are ($G$-equivariantly) isomorphic. For $g \in G(\bar{k})$, we usually denote by $\rho_g: \bar{Y} \to \bar{Y}$ the $\bar{X}$-automorphism $y \mapsto y \cdot g$. Note that this automorphism is {\em not} necessarily $\bar{G}$-equivariant unless $G$ is commutative. For $X = \Spec(k)$, the set $\H^1(k,G)$ can be described in terms of cocycles as follows. Equip $G(\bar{k})$ with the discrete topology. A cocycle $\sigma \in \Z^1(k,G)$ with coefficients in $G$ is a continuous ({\em i.e.}, locally constant) map $\sigma: \Gamma_k \to G(\bar{k})$ such that $\sigma_{st} = \sigma_s \tensor[^s]{\sigma}{_t}$ for all $s,t \in \Gamma_k$. Two such cocycles $\sigma$ and $\tau$ are cohomologous ({\em i.e.}, define the same element of $\H^1(k,G)$) if and only if there exists $g \in G(\bar{k})$ such that $\tau_s = g^{-1} \sigma_s \tensor[^s]{g}{}$ for all $s \in \Gamma_k$.

{\em Twisted forms.} Let $k$ and $G$ be as above. For each cocycle $\sigma \in \Z^1(k,G)$, we may {\em twist} $G$ to obtain an inner $k$-form $G^{\sigma}$ as follows. As an algebraic $\bar{k}$-group, $\bar{G}^{\sigma}:=\bar{G}$, but the twisted action $\cdot_\sigma$ of $\Gamma_k$ on $\bar{G}^{\sigma}$ is given by $s \mapsto \inn(\sigma_s) \circ s_{\#}$ (here, $\inn(g)$ denotes the conjugation by $g$, for each $g \in G(\bar{k})$), that is
    \begin{equation*}
        \forall s \in \Gamma_k,\forall g \in \bar{G}, \quad s \cdot_\sigma g := \sigma_s \tensor[^s]{g}{} \sigma_s^{-1}.
    \end{equation*} 
If $f: Y \to X$ is a torsor under $G$, we define the twisted torsor $f^{\sigma}: Y^{\sigma} \to X$ as follows. As a $\bar{k}$-variety, $\bar{Y}^{\sigma}:=\bar{Y}$, and as a $\bar{k}$-morphism, $f^{\sigma} \times_k \bar{k} = f \times_k \bar{k}$. The Galois action $\cdot_\sigma$ on $\bar{Y}^{\sigma}$ is given by $s \mapsto \rho_{\sigma_s}^{-1} \circ s_{\#}$, that is,
    \begin{equation*}
        \forall s \in \Gamma_k,\forall y \in \bar{Y}, \quad s \cdot_\sigma y := \tensor[^s]{y}{} \cdot \sigma_s^{-1}.
    \end{equation*}
If $G$ is commutative, then $G^{\sigma} = G$, and $[Y^\sigma] = [Y] - p^\ast [\sigma]$ in $\H^1(X,G)$, where $p: X \to \Spec(k)$ denotes the structure morphism.

{\em Groups of multiplicative type.} Let $k$ be a field. A {\em group of multiplicative type} $M$ over $k$ is a $k$-form of $\Gbb_{m}^n \times \prod_{i=1}^m \mu_{n_i}$, where $n \ge 0$ and $n_i \ge 1$. A $k$-{\em torus} is a connected group of multiplicative type, that is, a $k$-form of $\Gbb_{m}^n$ for some $n \ge 0$. The {\em character module} of $M$ is $\wh{M}:=\iHom_k(M,\Gbb_m)$, that is, $\wh{M} = \Hom_{\bar{k}}(\bar{M},\Gbb_m)$ as a (finitely generated) abelian group, equipped with the action of $\Gamma_k$ given by the formula
    \begin{equation*}
        \forall s \in \Gamma_k,\forall \chi \in \wh{M},\forall m \in \bar{M}, \quad (\tensor[^s]{\chi}{})(m):=\tensor[^s]{\chi}{}(\tensor[^{s^{-1}}]{m}{}).
    \end{equation*}
The group $M$ is smooth precisely when the integers $n_i$ are not divisible by the characteristic of $k$. If we denote by $\Delta: \bar{k}[M] \to \bar{k}[M] \otimes_{\bar{k}} \bar{k}[M]$ the comultiplication of the Hopf algebra $\bar{k}[M]$, then the $\bar{k}$-vector space $\bar{k}[M]$ has a basis consisting of the invertible functions $f \in \bar{k}[M]^{\times}$ satisfying $\Delta(f) = f \otimes f$ (the {\em group-like} elements). The $\Gamma_k$-module of group-like elements of $\bar{k}[M]$ is isomorphic to $\wh{M}$. Finally, we recall that the construction $M \mapsto \wh{M}$ establishes an exact anti-equivalence of categories, called {\em Cartier duality}, between the $k$-groups of multiplicative type and the discrete $\Gamma_k$-modules which are finitely generated as abelian groups.


\subsection{Statements of the main results} 

The article is organized as follows. 

In \S \ref{sec:Definition}, we give the definition of descent type. Paragraph \ref{subsec:H2} recalls the notion of kernel and non-abelian Galois $2$-cohomology, which is crucial to our construction. The definition of descent type and its three fundamental properties shall be presented in paragraph \ref{subsec:Type}. In paragraph \ref{subsec:Functoriality}, we show that the set of descent types on a base variety $X$ bound by a kernel $L$ is contravariantly functorial in $X$ and partially convariantly functoriality in $L$. Unlike torsors, descent types also behave nicely with extension of algebraic groups; this property shall be mentioned in paragraph \ref{subsec:Devissage}.

The \S \ref{sec:Abelian} of this article is devoted to the study of descent types bound by a commutative algebraic group. Our main result here is

\begin{customthm} [Theorem \ref{thm:Abelian}] \label{customthm:Abelian}
    Let $k$ be a perfect field, $X$ a non-empty $k$-variety, and $G$ a smooth commutative algebraic $k$-group. There exists a structure of abelian group on the set $\DType(X,G)$ of descent types on $X$ bound by $\lien(G)$, which is functorial in $X$ and $G$ (see Definition \ref{defn:DescentType} \ref{defn:DescentType3}). Furthermore, we have a commutative diagram 
	\begin{equation} \label{eq:DescentAbelianExactSequence}
		\xymatrix@C-1pc{
			& \H^1(k,G) \ar[r] \ar[d] & \H^1(X,G) \ar[r] \ar@{=}[d] & \DType(X,G) \ar[r] \ar[d] & \H^2(k,G) \ar[r] \ar[d] & \H^2(X,G) \ar@{=}[d] \\
			0 \ar[r] & \H^1(k,G(\bar{X})) \ar[r] & \H^1(X,G) \ar[r] & \H^0(k,\H^1(\bar{X},\bar{G})) \ar[r] &\H^2(k,G(\bar{X})) \ar[r] & \H^2(X,G),
		}
	\end{equation}
    where the bottom row is the exact sequence issued from the Hochschild--Serre spectral sequence 
	\begin{equation*}
		\H^p(k,\H^q(\bar{X},\bar{G})) \Rightarrow \H^{p+q}(X,G).
	\end{equation*}
    The top row of \eqref{eq:DescentAbelianExactSequence} is a complex, which is exact whenever $X$ is quasi-projective, reduced, and geometrically connected.
\end{customthm}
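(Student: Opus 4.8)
The plan is to reduce everything to the Hochschild--Serre spectral sequence $\H^p(k,\H^q(\bar X,\bar G)) \Rightarrow \H^{p+q}(X,G)$ of the structure morphism $p\colon X\to\Spec k$. Since $G$ is commutative, the band $\lien(G)$ is just the $\Gamma_k$-group $G$ and the non-abelian $2$-cohomology set occurring in the definition of a descent type is the ordinary abelian group $\H^2(k,G)$. One equips $\DType(X,G)$ with a group law by a contracted-product construction on the underlying (structured) $\bar X$-torsors, which on the two invariants --- the class $\pi(\lambda)\in\H^1(\bar X,\bar G)$ of the underlying $\bar X$-torsor and the recorded obstruction $\eta(\lambda)\in\H^2(k,G)$ --- induces respectively the group laws of $\H^1(\bar X,\bar G)$ and of $\H^2(k,G)$; the neutral element is the descent type of the trivial torsor and inverses come from inverse torsors. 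The axioms are then routine, and functoriality in $X$ (pullback) and in $G$ (pushforward) is inherited from that of $\H^1(\bar X,\bar G)$ and $\H^2(k,G)$, agreeing with the general functoriality of paragraph~\ref{subsec:Functoriality}.

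Next I assemble the diagram. The bottom row of \eqref{eq:DescentAbelianExactSequence} is the five-term exact sequence of the spectral sequence; exactness at its penultimate spot $\H^2(k,\H^0(\bar X,\bar G))$ is the standard fact that in a first-quadrant spectral sequence $E_\infty^{2,0}=E_3^{2,0}=\Coker(d_2)$ is the bottom graded piece of the filtration on $\H^2$, so that the edge map $E_2^{2,0}\to\H^2(X,G)$ has kernel $\Img(d_2)$. The vertical maps are: the map induced by $G(\bar k)\hookrightarrow G(\bar X)$ on the terms $\H^i(k,G)$; the identity on $\H^1(X,G)$ and $\H^2(X,G)$; and on $\DType(X,G)$ the map $\pi$. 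The outer two squares commute because $p^\ast$ factors through the edge maps $\H^i(k,\H^0(\bar X,\bar G))\to\H^i(X,G)$, and the square at $\H^1(X,G)$ commutes by the definitions of $\pi$ and of the map ``descent type of a torsor''. The only square with genuine content is the one at $\H^2(k,G)$: it asserts that for a descent type $\lambda$ the image of the Springer--Borovoi obstruction $\eta(\lambda)$ in $\H^2(k,G(\bar X))$ equals the transgression $d_2(\pi(\lambda))$. This I would check with cochains: $\eta(\lambda)$ records, in the band $\lien(G)$, the failure of the chosen semilinear lifts of $\bar Y$ to satisfy the Galois cocycle identity, which is exactly the $2$-cocycle that $d_2$ produces once its values are pushed from the automorphism sheaf $G(\bar X)$ into the band.

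Granting the diagram, the top row is a complex. The composite $\H^1(k,G)\to\H^1(X,G)\to\DType(X,G)$ vanishes because, for a $k$-torsor $Z$, the pullback $p^\ast Z$ is the twist of the trivial $X$-torsor by a Galois cocycle representing $\pm[Z]$ --- using $[Y^\sigma]=[Y]-p^\ast[\sigma]$ from paragraph~\ref{subsec:Notation} --- and descent types are by construction invariant under twisting; the composite $\H^1(X,G)\to\DType(X,G)\to\H^2(k,G)$ vanishes because a torsor lifts its own descent type, so its obstruction is zero; and the composite $\DType(X,G)\to\H^2(k,G)\to\H^2(X,G)$ vanishes because the image of $\eta(\lambda)$ in $\H^2(k,G(\bar X))$ is $d_2(\pi(\lambda))$, annihilated by the edge map to $\H^2(X,G)$. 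Exactness at $\H^1(X,G)$ and at $\DType(X,G)$ then follows from the fundamental properties of descent types: two torsors of the same descent type differ by twisting, so $\Ker(\H^1(X,G)\to\DType(X,G))=\Img\bigl(p^\ast\colon\H^1(k,G)\to\H^1(X,G)\bigr)$; and a descent type with vanishing obstruction lifts to a torsor, so $\Ker(\DType(X,G)\to\H^2(k,G))=\Img(\H^1(X,G)\to\DType(X,G))$ --- and it is this sufficiency (effectivity of fppf descent) that forces some geometric hypothesis on $X$, delicate precisely when $G$ is not affine.

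The last point, exactness of the top row at $\H^2(k,G)$, is the hardest and where ``quasi-projective, reduced, and geometrically connected'' is really needed. One must realize every $\theta\in\H^2(k,G)$ with $p^\ast\theta=0$ as the obstruction $\eta(\lambda)$ of a descent type. Factoring $p^\ast$ as $\H^2(k,G)\xrightarrow{\,p^\ast_G\,}\H^2(k,G(\bar X))\to\H^2(X,G)$ and invoking exactness of the bottom row gives $p^\ast_G(\theta)=d_2(\mu)$ for some $\mu\in\H^0(k,\H^1(\bar X,\bar G))$; using the commutative diagram, the remaining obstruction to producing the desired $\lambda$ lies in $\Ker\bigl(p^\ast_G\colon\H^2(k,G)\to\H^2(k,G(\bar X))\bigr)$, the image of the connecting map out of $\H^1(k,W)$ with $W:=G(\bar X)/G(\bar k)$. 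The geometric hypotheses control $W$: for $G=\Gbb_m$, Rosenlicht's theorem says $W=\bar k[\bar X]^\times/\bar k^\times$ is a finitely generated free $\Gamma_k$-module, and together with Hilbert~90 and effectivity of descent this lets one build the required descent types; the general case follows by the dévissage of paragraph~\ref{subsec:Devissage}, the finite, additive and abelian-variety building blocks being either cohomologically trivial or governed by the analogous structure of $G(\bar X)$. I expect the heart of the difficulty to be pinning down the Springer--Borovoi obstruction against the transgression $d_2$ precisely enough to get this surjectivity; the rest is formal diagram chasing or appeal to the already-established properties of descent types.
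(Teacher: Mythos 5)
Your overall skeleton (group law via a product construction, the five-term Hochschild--Serre sequence on the bottom, exactness at $\H^1(X,G)$ and at $\DType(X,G)$ via Proposition \ref{prop:DescentType}) matches the paper, but there is a genuine gap at the step you yourself single out as the hardest: exactness at $\H^2(k,G)$. Your plan is to pick $\mu\in\H^0(k,\H^1(\bar X,\bar G))$ with $d_2(\mu)$ equal to the image of $\theta$, and then to treat the discrepancy as an element of $\Ker\bigl(\H^2(k,G)\to\H^2(k,G(\bar X))\bigr)$ to be absorbed by analysing $W=G(\bar X)/G(\bar k)$ via Rosenlicht, Hilbert~90 and a d\'evissage on $G$. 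This does not close the argument: first, you never explain why \emph{any} descent type with underlying torsor class $\mu$ exists (a subgroup of $\SAut_G(\bar Y/X)$ that is an extension of $\Gamma_k$ by $G(\bar k)$ need not exist a priori --- its existence is exactly equivalent to $\trans([\bar Y])$ lying in the image of $\H^2(k,G)\to\H^2(k,G(\bar X))$, i.e.\ to the very hypothesis you are trying to exploit); second, even granting such a $\lambda$, you never say how ``controlling $W$'' modifies it so that its obstruction becomes $\theta$ itself rather than some other class with the same image, and the kernel of $\H^2(k,G)\to\H^2(k,G(\bar X))$ is not zero in general, so no structural statement about $W$ alone can finish the job. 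The paper's argument is instead purely formal and needs none of this machinery: represent $\theta$ by a topological extension $E$ of $\Gamma_k$ by $G(\bar k)$; the equality of its pushforward along $G(\bar k)\hookrightarrow G(\bar X)$ with $\trans([\bar Y])$, identified (Proposition \ref{prop:H2Transgression}) with the class of $1\to G(\bar X)\to\SAut_G(\bar Y/X)\to\Gamma_k\to 1$, yields a morphism of extensions $E\to\SAut_G(\bar Y/X)$ over $\Gamma_k$, which is automatically injective; the pair $(\bar Y,E)$ is then a descent datum whose obstruction is literally $\theta$. Your detour through Rosenlicht and the d\'evissage of \S\ref{subsec:Devissage} is both unnecessary and, as sketched, not a proof (note also that $W$ depends on $X$, not just on the building blocks of $G$, so ``cohomologically trivial blocks'' does not reduce anything).

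A secondary, smaller issue: the commutativity of the square relating the Borovoi--Springer class of $\lambda$ to the transgression $d_2(\pi(\lambda))$ is asserted to be a cochain check, but this identification of the Hochschild--Serre transgression with the class of the extension $\SAut_G(\bar Y/X)$ is a real ingredient (the paper derives it from the gerbe description of the transgression and Breen's cocycle computation, and your phrasing about pushing values ``from $G(\bar X)$ into the band'' goes in the wrong direction --- the comparison pushes $G(\bar k)$-valued cocycles into $G(\bar X)$). Since the formal resolution of the exactness at $\H^2(k,G)$ rests entirely on this identification, it needs to be established, not just announced.
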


The top row of \eqref{eq:DescentAbelianExactSequence} is similar to the fundamental exact sequence of Colliot-Th\'el\`ene--Sansuc (see \cite[Theorem 2.3.6, Corollary 2.3.9]{Skorobogatov2001Torsors}), or more generally, the exact sequence for the extended types {\em \`a la} Harari--Skorobogatov \cite[Proposition 8.1]{HS2013Descent}. In fact, we shall show in \S \ref{sec:Comparison} that they are equivalent. In doing so, we also give an explicit and accessible description of the extended type in paragraph \ref{subsec:ExtendedType}.

\begin{customthm} [Theorem \ref{thm:Comparison}] \label{customthm:Comparison}
    For torsors under groups of multiplicative types, the notions of descent type and extended type \cite[Definition 8.2]{HS2013Descent} are equivalent.
\end{customthm}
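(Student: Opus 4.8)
The plan is to compare both invariants with the cohomology class in $\H^1(X,M)$ that each of them refines, and then to conclude by a diagram chase. First I would recall the extended type of \cite[Definition 8.2]{HS2013Descent}: for a $k$-group of multiplicative type $M$ with character module $\wh M$ and an $X$-torsor $Y$ under $M$, its extended type is the image of $[Y]\in\H^1(X,M)$ under the canonical morphism to $\H^1\!\big(k,R\iHom_{\mathbb Z}(\wh M,\UPic\,\bar X)\big)$ induced by the complex $\UPic\,\bar X=\operatorname{cone}\!\big(\Gbb_m\to\tau_{\le 1}R\Gamma(\bar X,\Gbb_m)\big)$, whose cohomology is $\Unit(\bar X)$ in degree $0$ and $\Pic\,\bar X$ in degree $1$; and the group $\H^1\!\big(k,R\iHom(\wh M,\UPic\,\bar X)\big)$ sits in the exact sequence of \cite[Proposition 8.1]{HS2013Descent}, whose four outer terms $\H^1(k,M)$, $\H^1(X,M)$, $\H^2(k,M)$, $\H^2(X,M)$ and the two pullback maps between them coincide with those of the top row of \eqref{eq:DescentAbelianExactSequence}. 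By Theorem~\ref{thm:Abelian} the group $\DType(X,M)$ sits in that top row, so it suffices to construct a canonical isomorphism of the two middle terms compatible with the maps from $\H^1(X,M)$ and with the maps to $\H^2(k,M)$.

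For this I would use two identifications in $\Dcal^+$. Since $M$ is smooth, multiplication by $n$ is an epimorphism of \'etale sheaves on $\Gbb_m$ for every $n$ dividing the order of the finite part of $M$, so Cartier duality upgrades to a quasi-isomorphism $\bar M\simeq R\iHom_{\bar X}(\wh M,\Gbb_m)$; applying $R\Gamma(\bar X,-)$ together with the adjunction that identifies $R\Gamma\!\big(\bar X,R\iHom_{\bar X}(\wh M,\Gbb_m)\big)$ with $R\iHom_{\mathbb Z}\!\big(\wh M,R\Gamma(\bar X,\Gbb_m)\big)$ gives $R\Gamma(\bar X,\bar M)\simeq R\iHom_{\mathbb Z}(\wh M,R\Gamma(\bar X,\Gbb_m))$ in $\Dcal^+(k)$, so that $\H^1(\bar X,\bar M)$ and its Galois-equivariant refinements are computed by applying $R\iHom_{\mathbb Z}(\wh M,-)$ to $R\Gamma(\bar X,\Gbb_m)$. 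Feeding in the distinguished triangle $\Gbb_m\to\tau_{\le 1}R\Gamma(\bar X,\Gbb_m)\to\UPic\,\bar X\to\Gbb_m[1]$ and taking $R\Gamma(k,-)$ then rewrites $\H^1(k,R\iHom(\wh M,\UPic\,\bar X))$ as the middle term of a five-term exact sequence matching the top row of \eqref{eq:DescentAbelianExactSequence} term by term. To make the resulting isomorphism canonical rather than merely abstract, I would go through \S\ref{subsec:ExtendedType}, where I intend to give a $\check{\textrm{C}}$ech/cocyclic description of the extended type directly from a torsor $Y$, a cover trivializing it, and a choice of trivializing group-like functions --- exactly the combinatorial data out of which the descent type of $Y$ is built --- so that the two prescriptions literally agree after the above translation; in particular the comparison isomorphism sends the descent type of each torsor to its extended type. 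Compatibility with the boundary maps to $\H^2(k,M)$ is then automatic, since in both theories that boundary is the obstruction to descending $\bar Y$ to a $k$-torsor, represented by the same cocycle. Once the map is in place, the five lemma applied to the two exact sequences --- both exact when $X$ is quasi-projective, reduced, and geometrically connected --- yields bijectivity.

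The step I expect to be the main obstacle is the derived-category bookkeeping when $M$ is not a torus. Then $\wh M$ has torsion, so $R\iHom_{\mathbb Z}(\wh M,-)$ contributes $\iExt^1$-terms built from $\bar k[X]^\times$ and $\Pic\,\bar X$, and one must check that the filtration these induce on $\H^1(k,R\iHom(\wh M,\UPic\,\bar X))$ coincides with the one that $\DType(X,M)$ inherits from the Hochschild--Serre spectral sequence $\H^p(k,\H^q(\bar X,\bar G))\Rightarrow\H^{p+q}(X,G)$ of \eqref{eq:DescentAbelianExactSequence}, and that the comparison map respects the splittings produced by pulling back along a rational point. A secondary technical point is to match the hypotheses on $X$ under which the extended type of \cite{HS2013Descent} lives in an exact sequence with those under which the top row of \eqref{eq:DescentAbelianExactSequence} is exact, so that no spurious hypothesis enters; outside that common range the comparison morphism is still defined, and I expect it to remain an isomorphism of abelian groups even though the surrounding sequences need not be exact.
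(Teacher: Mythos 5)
There is a genuine gap, and it sits at the heart of the theorem. Your construction only ever produces a comparison on the image of $\H^1(X,G)$: the derived-category identifications ($\bar{G}\simeq \Rbb\iHom_{\bar X}(\wh G,\Gbb_m)$, adjunction, the triangle defining $\UPic(\bar X)$) relate Harari--Skorobogatov's group $\Hom_k(\wh G,\UPic(\bar X))$ to other \emph{cohomological} objects, and your \v{C}ech/cocyclic description of the extended type takes as input an honest torsor $Y\to X$ together with a trivializing cover. But an element of $\DType(X,G)$ is an isomorphism class of pairs $(\bar Y,E)$ with $E\subseteq\SAut(\bar Y/X)$, and the interesting ones are precisely those whose class $[E]\in\H^2(k,G)$ is non-neutral, so that \emph{no} torsor $Y\to X$ realizes them; for these your recipe produces nothing. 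Having two five-term exact sequences whose four outer terms agree does not furnish a map between the middle terms, so the five lemma cannot even be set up. What is missing is the paper's key mechanism (Lemma \ref{lemm:ActionOfSAut} and Construction \ref{cons:Comparison}): the subgroup $E$ acts on $\bar k(\bar Y)^\times_{\bar G}$ by $\alpha\cdot a=q(\alpha)_{\#}\circ a\circ\alpha^{-1}$, the central $G(\bar k)$ acts trivially modulo constants, and one thereby gets a genuine $\Gamma_k$-module structure on $\bar k(\bar Y)^\times_{\bar G}/\bar k^\times$ and hence an extension class in $\Hom_k(\wh G,\UPic(\bar X))$ attached to an arbitrary descent datum, compatible (by Proposition \ref{prop:Comparison1}) with $\xtype$ on torsors. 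Without some construction of this kind your map $\Phi$ is undefined exactly where the statement has content.

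A second, lesser but still real, problem is the claim that compatibility with the boundary maps to $\H^2(k,G)$ is ``automatic'' because both classes are ``the obstruction to descending $\bar Y$.'' The Harari--Skorobogatov boundary $\partial$ is a Yoneda product with the class of the extension $0\to\bar k^\times\to[\bar k(X)^\times\to\Div(\bar X)]\to\UPic(\bar X)[-1]\to 0$ in $\Dcal^+(k)$, while $[E]$ is the class of a topological group extension of $\Gamma_k$ by $G(\bar k)$; identifying them (and only up to sign!) is precisely the hardest computation in the paper's \S\ref{subsec:FundamentalComparison}, requiring a localization, the auxiliary groups $T$ and $M$ with $\wh T=\Unit(\bar X)$ and $\wh M=\bar k[\bar Y]^\times_{\bar G}/\bar k^\times$, the morphisms $\bar Y\to\bar M$, $\bar X\to\bar T$, an explicit cocycle comparison, and the characterization of $E$ inside $\SAut_G(\bar Y/X)$ (Lemma \ref{lemm:CharacterizationofE}). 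That step cannot be waved through, and the sign it produces is invisible in your argument. (Your final expectation that $\Phi$ stays an isomorphism beyond the quasi-projective case is also unsupported; the paper only claims it there.)
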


{\em Acknowledgements.} This work is a part of the author's Ph. D. thesis. I would like to thank the Laboratoire de Math\'ematiques d'Orsay, Universit\'e Paris-Saclay for the excellent working condition, as well as the ``Contrat doctorant sp\'ecifique normalien'' from \'Ecole normale sup\'erieure which funded my research. I am especially grateful to my Ph. D. advisor, David Harari, for his support during the work and the valuable comments on the manuscript. The existence of this article was inspired by a personal communication with Olivier Wittenberg, whom I warmly thank. I also appreciate Giancarlo Lucchini Arteche for the discussions on kernels and non-abelian $2$-cohomology.
\section{Definition and basic properties}  \label{sec:Definition}

\subsection{Kernels and non-abelian 2-cohomology} \label{subsec:H2}

In the last 30 years, the notion of kernel\footnote{or band, or lien.} and non-abelian Galois $2$-cohomology has been systematically being used for the study of homogeneous spaces without rational points. We recall their definitions and essential properties in this \S. The typical references for this subject are \cite[\S 1 and \S 2]{Borovoi1993H2}, \cite[\S 1]{FSS1998H2}, or \cite[\S 2.2]{DLA2019Reduction}.

Let $k$ be a {\em perfect} field. We shall deploy the notion of $k$-kernel in the sense of Borovoi--Springer. Let $p: \bar{X} \to \Spec(\bar{k})$ be a non-empty $\bar{k}$-variety. For $s \in \Gamma_k$, we shall say that a $k$-automorphism $\phi: \bar{X} \to \bar{X}$ is {\em $s$-semilinear} if $p \circ \phi = \Spec(s^{-1}) \circ p$. A $1$-semilinear $k$-automorphism is simply a $\bar{k}$-automorphism. In general, to give an $s$-semilinear automorphism of $\bar{X}$ is to give a $\bar{k}$-isomorphism $(\Spec(s))^{\ast} \bar{X} \to \bar{X}$. For example, if $X$ is a $k$-form of $\bar{X}$, then the action of $s$ on $\bar{X} = X \times_k \bar{k}$, {\em i.e.}, $s_{\#} = \id_X \times_k \Spec(s^{-1})$, is $s$-semilinear. We say that a $k$-automorphism of $\bar{X}$ is {\em semilinear} if it is $s$-semilinear for some $s \in \Gamma_k$; such an $s$ is necessarily unique. Thus, if $\SAut(\bar{X}/k)$ denotes the group of semilinear $k$-automorphisms of $\bar{X}$, then we have a homomorphism $\SAut(\bar{X}/k) \to \Gamma_k$ with kernel $\Aut(\bar{X}/\bar{k})$.

Let $\bar{G}$ be a {\em smooth} algebraic group over $\bar{k}$. Let $\SAut^{\gr}(\bar{G}/k)$ denote the subgroup of $\SAut(\bar{G}/k)$ consisting of semilinear automorphisms $\phi: \bar{G} \to \bar{G}$ that correspond to an isomorphism 
    \begin{equation*}
        (\Spec(s))^\ast \bar{G} \to \bar{G}
    \end{equation*}
of algebraic $\bar{k}$-groups for some (unique) $s \in \Gamma_k$. Equivalently, an element $\phi \in \SAut(\bar{G}/k)$ belongs to $\SAut^{\gr}(\bar{G}/k)$ if and only if it induces an isomorphism on the abstract group $\bar{G}(\bar{k})$. It is clear that $\SAut^{\gr}(\bar{G}/k) \cap \Aut(\bar{G}/\bar{k}) = \Aut^{\gr}(\bar{G}/\bar{k})$, the group of automorphism of the algebraic $\bar{k}$-group $\bar{G}$. Thus, we have an exact sequence
    \begin{equation} \label{eq:H2KernelAut}
        1 \to \Aut^{\gr}(\bar{G}/\bar{k}) \to \SAut^{\gr}(\bar{G}/k) \to \Gamma_k.
    \end{equation}
Let $\Inn(\bar{G}) := \{\inn(g): g \in \bar{G}(\bar{k})\}$, and put $\Out(\bar{G}) := \Aut^{\gr}(\bar{G}/\bar{k})/\Inn(\bar{G})$ and $\SOut(\bar{G}/k) := \SAut^{\gr}(\bar{G}/k)/\Inn(\bar{G})$, then the sequence
    \begin{equation} \label{eq:H2KernelOut}
        1 \to \Out(\bar{G}) \to \SOut(\bar{G}/k) \to \Gamma_k
    \end{equation}
is exact. Equip the set $\SAut^{\gr}(\bar{G}/k)$ with the initial topology defined by the evaluation maps
    \begin{equation*}
        \ev_g: \SAut^{\gr}(\bar{G}/k) \to G(\bar{k}), \quad \phi \mapsto \phi(g)
    \end{equation*}
for $g \in \bar{G}(\bar{k})$, where $\bar{G}(\bar{k})$ is given the discrete topology. A {\em $k$-kernel on $\bar{G}$} is a homomorphic section $\kappa: \Gamma_k \to \SOut(\bar{G}/k)$ of \eqref{eq:H2KernelOut} ({\em i.e.}, a semilinear outer action of $\Gamma_k$ on $\bar{G}$), which lifts to a continuous (set-theoretic) section $\Gamma_k \to \SAut^{\gr}(\bar{G}/k)$ of \eqref{eq:H2KernelAut}. 

We also call the pair $L = (\bar{G},\kappa)$ a $k$-kernel. A $k$-form $G$ of $\bar{G}$ defines a $k$-kernel, denoted by $\lien(G)$. We shall say $G$ is a {\em $k$-form of $L$} if $L = \lien(G)$. By Galois descent, to give a $k$-form of $G$ of $L$ is to give a continuous homomorphic section $\Gamma_k \to \SAut^{\gr}(\bar{G}/k)$ of \eqref{eq:H2KernelAut} ({\em i.e.}, a semilinear action of $\Gamma_k$ on $\bar{G}$) lifting $\kappa$. If $G'$ is another $k$-form of $\bar{G}$, then $G'$ is a $k$-form of $\lien(G)$ if and only if $G'$ is an inner form of $G$. When $\bar{G}$ is commutative, any $k$-kernel on $\bar{G}$ has a {\em unique} $k$-form.

There exist at least three equivalent definitions of the set $\H^2(k,L)$ of non-abelian Galois $2$-cohomology with coefficients in $L$. We shall mainly use the one in terms of group extensions. A topological extension of $\Gamma_k$ by $\bar{G}(\bar{k})$ is an exact sequence of topological groups
    \begin{equation*}
        1 \to \bar{G}(\bar{k}) \to E \xrightarrow{q} \Gamma_k \to 1,
    \end{equation*}
where $\bar{G}(\bar{k})$ is discrete in $E$ and where the map $q$ is open (and continuous). It naturally induces an outer action $\Gamma_k \to \Out(\bar{G}(\bar{k})) = \Aut(\bar{G}(\bar{k})) / \Inn(\bar{G}(\bar{k}))$ on the abstract group $\bar{G}(\bar{k})$. We say that such an extension is {\em bound by $L$} if this outer action coincides with the natural outer action induced by $\kappa$. Two such extensions $E$ and $E'$ are said to be {\em equivalent} if there is an isomorphism $E \xrightarrow{\cong} E'$ of topological groups fitting in a commutative diagram
    \begin{equation*}
        \xymatrix{
            1 \ar[r] & \bar{G}(\bar{k}) \ar@{=}[d] \ar[r] & E \ar[d]^{\cong} \ar[r] & \Gamma_k  \ar[r] \ar@{=}[d] & 1 \\
            1 \ar[r] & \bar{G}(\bar{k}) \ar[r] & E' \ar[r] & \Gamma_k  \ar[r] & 1.
        }
    \end{equation*}
    
Let $\H^2(k,L)$ be the set of equivalent classes of topological extensions of $\Gamma_k$ by $\bar{G}(\bar{k})$ which are bound by $L$. It might happen that the set $\H^2(k,L)$ is empty \cite[\S 1.20 and \S 1.21]{FSS1998H2}. The class $[E] \in \H^2(k,L)$ of an extension $E$ is said to be {\em neutral} if it is split, {\em i.e.}, $E \to \Gamma_k$ has a continuous homomorphic section. Even if $\H^2(k,L)$ is non-empty, it needs not have a neutral element; and if a neutral element exists, it needs not be unique. When $G$ is a commutative algebraic $k$-group, $\H^2(k,G):=\H^2(k,\lien(G))$ is the usual Galois $2$-cohomology group with coefficients in $G$, and $0$ is its unique neutral element. \label{NeutralClassInH2}

If $L = (\bar{G},\kappa)$ and $L' = (\bar{G}',\kappa')$ are $k$-kernels, a {\em morphism of $k$-kernels} $L \to L'$ is a morphism $\varphi: \bar{G} \to \bar{G}'$ of algebraic $\bar{k}$-groups such that there exist respective continuous set-theoretic sections $\phi: \Gamma_k \to \SAut^{\gr}(\bar{G}/k)$ and $\phi': \Gamma_k \to \SAut^{\gr}(\bar{G}'/k)$ lifting $\kappa$ and $\kappa'$, such that $\varphi \circ \phi_s = \phi_s' \circ \varphi$ for all $s \in \Gamma_k$. The morphism $\varphi$ does not necessarily induce a map $\H^2(k,L) \to \H^2(k,L')$. Nevertheless, there is always a {\em relation}
    \begin{equation*}
        \varphi_\ast: \H^2(k,L) \multimap \H^2(k,L'),
    \end{equation*}
defined as follows (see \cite[\S 2.2.3]{DLA2019Reduction}). A class $\eta' \in \H^2(k,L')$ is in relation $\varphi_\ast$ with another class $\eta \in \H^2(k,L)$ if and only if there exist topological extensions $E$ and $E'$ representing $\eta$ and $\eta'$ respectively, and a continuous homomorphism $E \to E'$ fitting in a commutative diagram
        \begin{equation*}
            \xymatrix{
                1 \ar[r] & \bar{G}(\bar{k}) \ar[d]^{\varphi} \ar[r] & E \ar[d] \ar[r] & \Gamma_k \ar[r] \ar@{=}[d] & 1 \\
                1 \ar[r] & \bar{G}'(\bar{k}) \ar[r] & E' \ar[r] & \Gamma_k \ar[r] & 1.
            }
        \end{equation*}
If either the underlying morphism $\varphi: \bar{G} \to \bar{G}'$ of algebraic $\bar{k}$-groups is surjective or $\bar{G}'$ is commutative, then the relation $\varphi_\ast$ is in fact a {\em map}. We also note that, if $K$ is any field extension of $k$ which is perfect, then $L$ restricts to a $K$-kernel $L_K$, and we have a restriction map 
    \begin{equation*}
        \H^2(k,L) \to \H^2(K,L) := \H^2(K, L_K).
    \end{equation*}

\subsection{Non-abelian descent types} \label{subsec:Type}

We consider a perfect field $k$ and a non-empty variety $p: X \to \Spec(k)$. For most of the cases, $X$ will be quasi-projective (a condition which allows Galois descent). 

If $\bar{Y}$ is a $\bar{k}$ variety equipped with a morphism $f: \bar{Y} \to \bar{X} := X_{\bar{k}}$, we set $\SAut(\bar{Y}/X) := \Aut(\bar{Y}/X) \cap \SAut(\bar{Y}/k)$. This group is equipped with the {\em weak topology}, which, by definition, is the initial topology defined by the maps
	\begin{equation*}
		\ev_y: \SAut(\bar{Y}/X) \to \bar{Y}(\bar{k}), \quad \alpha \mapsto \alpha(y),
	\end{equation*}
for $y \in \bar{Y}(\bar{k})$. One can then show that the restriction $q: \SAut(\bar{Y}/X) \to \Gamma_k$ of the natural homomorphism  $\SAut(\bar{Y}/k) \to \Gamma_k$ is open. Nevertheless, $q$ needs not be continuous. For technical reasons, we shall work with a finer topology on $\SAut(\bar{Y}/X)$, called the {\em modified weak topology}. It is, by definition, the initial topology defined by the maps $\ev_y$ and $q$. This makes $q$ continuous and open without changing the subspace topology on $\Aut(\bar{Y}/\bar{X})$ (which equals $\Ker(q)$). Indeed, to show that $q$ is open, consider a basic open subset $U \subseteq \SAut(\bar{Y}/X)$, which has the form 
	\begin{equation*}
            U = \left(\bigcap_{i=1}^n \ev_{y_i}^{-1}(\Sigma_i)\right)\cap q^{-1}(s_0\Gamma_K) = \{\alpha \in q^{-1}(s_0\Gamma_K): \forall i \in \{1,\ldots,n\}, \alpha(y_i) \in \Sigma_i\},
	\end{equation*}
where $n \ge 0$, $y_1,\ldots,y_n \in \bar{Y}(\bar{k})$, $\Sigma_1,\ldots,\Sigma_n \subseteq \bar{Y}(\bar{k})$, $s_0 \in \Gamma_k$, and $K/k$ is a finite extension. Take a sufficiently large finite extension $K'/K$ such that $\bar{Y}$ is the base change to $\bar{k}$ of a torsor $Y \to X_{K'}$ under a $K'$-form $G'$ of $\bar{G}$, and $y_1,\ldots,y_n \in Y(K')$. Let $\alpha \in U$. For each $t \in \Gamma_{K'}$, let $t_{\#}: \bar{Y} \to \bar{Y}$ denote the action of $t$ on $\bar{Y}$ defined {\em via} its $K'$-form $Y$. Since $t_{\#} \in \Gamma_{K'} \subseteq \Gamma_K$ stabilizes $y_1,\ldots,y_n$, one has $\alpha \circ t_{\#} \in U$. It follows that $q(\alpha) t = q(\alpha \circ t_{\#}) \in q(U)$, hence $q(U) \subseteq \Gamma_k$ contains the open neighborhood $q(\alpha)\Gamma_{K'}$ of $q(\alpha)$. Since $\alpha$ is arbitrary, we conclude that $q(U)$ is open. Furthermore, a set-theoretic section $\Gamma_k \to \SAut(\bar{Y}/X)$ is continuous with respect to this modified topology if and only if it is continuous with respect to the weak topology.

To sum up, we have an exact sequence of topological groups
	\begin{equation} \label{eq:DescentTypeExactSequenceSAut}
		1 \to \Aut(\bar{Y}/\bar{X}) \to \SAut(\bar{Y}/X) \xrightarrow{q} \Gamma_k.
	\end{equation}
If $f: \bar{Y} \to \bar{X}$ is a torsor under a smooth algebraic group $\bar{G}$ over $\bar{k}$, then we have an inclusion
    \begin{equation*}
        \bar{G}(\bar{X}) \hookrightarrow \Aut(\bar{Y}/\bar{X}), \quad \sigma \mapsto \rho_{\sigma}^{-1},
    \end{equation*}
where, for each $\bar{k}$-morphism $\sigma: \bar{X} \to \bar{G}$, the $\bar{X}$-isomorphism $\rho_\sigma: \bar{Y} \to \bar{Y}$ is defined by $y \mapsto y \cdot \sigma(f(y))$. In particular, we have an inclusion 
    \begin{equation*}
        \bar{G}(\bar{k}) \hookrightarrow \Aut(\bar{Y}/\bar{X}), \quad g \mapsto \rho_{g}^{-1},
    \end{equation*}
Note that the topology on $\SAut(\bar{Y}/X)$ induces the discrete topology on $\bar{G}(\bar{k})$. Indeed, fix a point $y \in \bar{Y}(\bar{k})$, then its stabilizer $U \subseteq \SAut(\bar{Y}/X)$ is an open subset and $U \cap \bar{G}(\bar{k}) = \{1\}$ by the very definition of a torsor.

\begin{defn} \label{defn:DescentType}
    Let $L = (\bar{G},\kappa)$ be a $k$-kernel.
	\begin{enumerate}
	    \item \label{defn:DescentType1} A {\em descent datum on $X$ bound by $L$} is a pair $(\bar{Y},E)$, where $\bar{Y} \to \bar{X}$ is a torsor under $\bar{G}$ and $E \subseteq \SAut(\bar{Y}/X)$ is a subgroup fitting in an exact sequence
		\begin{equation*}
			\xymatrix{
				1 \ar[r] & \bar{G}(\bar{k}) \ar[r] \ar@{_{(}->}[d]^{g \mapsto \rho_g^{-1}} & E \ar[r] \ar@{_{(}->}[d] & \Gamma_k \ar[r] \ar@{=}[d] & 1 \\
				1 \ar[r] & \Aut(\bar{Y}/\bar{X}) \ar[r] & \SAut(\bar{Y}/X) \ar[r]^-{q} & \Gamma_k
			}
		\end{equation*} 
		of topological groups, where the top row is bound by $L$ and the bottom row is \eqref{eq:DescentTypeExactSequenceSAut}.
		
	   \item \label{defn:DescentType2} Let $L' = (\bar{G}',\kappa')$ be a second $k$-kernel, $\varphi: L \to L'$ a morphism, and $(\bar{Y}', E')$ a descent datum on $X$ bound by $L'$. A {\em morphism of descent data $(\bar{Y}, E) \to (\bar{Y}', E')$ compatible with $\varphi$} is a pair $(\iota,\varpi)$, where $\iota: \bar{Y} \to \bar{Y}'$ is an $\varphi$-equivariant $\bar{X}$-morphism and $\varpi: E \to E'$ is a continuous homomorphism fitting in a commutative diagram
		\begin{equation*}
			\xymatrix{
				1 \ar[r] & \bar{G}(\bar{k}) \ar[r] \ar[d]^{\varphi} & E \ar[r] \ar[d]^{\varpi} & \Gamma_k \ar@{=}[d] \ar[r] & 1 \\
				1 \ar[r] & \bar{G}'(\bar{k}) \ar[r] & E' \ar[r] & \Gamma_k \ar[r] & 1
			}
		\end{equation*} 
	such that the diagram
		\begin{equation*}
			\xymatrix{
				\bar{Y} \ar[r]^{\alpha} \ar[d]^{\iota} & \bar{Y} \ar[d]^{\iota} \\
				\bar{Y}' \ar[r]^{\varpi_\alpha} & \bar{Y}'
			}
		\end{equation*}
	commutes for all $\alpha \in E$.
		
	\item \label{defn:DescentType3} An isomorphism of descent data on $X$ bound by $L$ is an isomorphism compatible with $\id_L$. The isomorphism class of such a descent datum $(\bar{Y},E)$ is denoted by $[\bar{Y},E]$ and is called a {\em descent type on $X$ bound by $L$}. Denote by $\DType(X,L)$ the set of such isomorphism classes.
	\end{enumerate}
\end{defn} 

It is easy to see that an isomorphism $(\bar{Y},E) \xrightarrow{\cong} (\bar{Y}',E')$ of descent data on $X$ bound by $L$ is given by a $\bar{G}$-equivariant $\bar{X}$-isomorphism $\iota: \bar{Y} \to \bar{Y}'$ such that $\iota \circ \alpha \circ \iota^{-1} \in E'$ for all $\alpha \in E$.

\begin{remk} \label{remk:DescentType}
    Let $L = (\bar{G},\kappa)$ be a $k$-kernel.
    \begin{enumerate}
        \item \label{remk:DescentType1} For each $s$-semilinear automorphism $\phi \in \SAut^{\gr}(\bar{G}/k)$, where $s \in \Gamma_k$, one defines the {\em conjugate torsor} $\phi\bar{Y}$ over $\bar{X}$ as follows. Let $\phi\bar{Y}$ be the $\bar{X}$-scheme $\bar{Y}$ equipped with the ``twisted'' structure morphism $\bar{Y} \to \bar{X} \xrightarrow{s_{\#}} \bar{X}$. Then the action
        \begin{equation*}
            \bar{Y} \times \bar{G} \to \bar{Y}, \quad (y,g) \mapsto y \cdot \phi^{-1}(g)
        \end{equation*}
        makes $\phi \bar{Y}$ an $\bar{X}$-torsor under $\bar{G}$. This construction yields action of $\SAut^{\gr}(\bar{G}/k)$ on the set $\H^1(\bar{X},\bar{G})$. If $\phi = \inn(g)$ for some $g \in \bar{G}(\bar{k})$, then the map $\rho_g$ defines a $\bar{G}$-equivariant $\bar{X}$-isomorphism $\bar{Y} \xrightarrow{\cong} \phi\bar{Y}$, that is, $[\bar{Y}] = [\phi\bar{Y}]$ in $\H^1(\bar{X},\bar{G})$. Thus, we obtain an action of $\SOut(\bar{G}/k)$ on $\H^1(\bar{X},\bar{G})$. In particular, the map $\kappa$ naturally induces an action of $\Gamma_k$ on $\H^1(\bar{X},\bar{G})$. If $(\bar{Y},E)$ is descent datum on $X$ bound by $L$, then the surjectivity of $E \to \Gamma_k$ implies $[\bar{Y}] \in \H^0(k,\H^1(\bar{X},\bar{G}))$. Thus, we obtain two maps
		\begin{equation*}
			\DType(X,L) \to  \H^0(k,\H^1(\bar{X},\bar{G})), \quad [\bar{Y}, E] \mapsto [\bar{Y}] 
		\end{equation*}
	and
		\begin{equation*}
			\DType(X,L) \to \H^2(k,L), \quad [\bar{Y}, E] \mapsto [E]. 
		\end{equation*}

    \item \label{remk:DescentType2} Suppose that $(\bar{Y},E)$ is a descent datum on $X$ bound by $L$ with $\bar{Y}$ connected and $\bar{G}(\bar{k})$ finite. Then $\Aut(\bar{Y}/\bar{X}) = \bar{G}(\bar{k})$, hence $E = \Aut(\bar{Y}/X)$. Thus, the descent type $[\bar{Y},E]$ is completely determined by the $\bar{X}$-torsor $\bar{Y}$ under $\bar{G}$, which is a finite \'etale cover of $\bar{X}$ such that the composite $\bar{Y} \to \bar{X} \to X$ is Galois (that is, the field extension $\bar{k}(\bar{Y})/k(X)$ is Galois). We recover the notion of {\em finite descent type} by Harpaz--Wittenberg \cite[Definition 2.1]{HW2022Supersolvable}.
    \end{enumerate}
\end{remk}

A descent type $[\bar{Y},E] \in \DType(X,L)$ gives rise to not only an $\bar{X}$-torsor $\bar{Y}$, but also a cohomology class $[E] \in \H^2(k,L)$. Let us show that the non-neutrality this class is precisely the obstruction to lifting $\bar{Y}$ to an $X$-torsor $Y$ and that two such liftings differ by twisting by a Galois cocycle.

Let $Y \to X$ be a torsor under an algebraic $k$-group $G$. Then we have a semilinear action of $\Gamma_k$ of $Y$, that is, a continuous homomorphic section
	\begin{equation*}
		\varsigma_Y: \Gamma_k \to \SAut(\bar{Y}/X), \quad s \mapsto s_{\#}.
	\end{equation*} 
It is easily checked that 
	\begin{equation} \label{eq:DescentType1}
		\forall s \in \Gamma_k,\forall g \in G(\bar{k}), \quad \rho_{\tensor[^s]{g}{}} = s_{\#} \circ \rho_g \circ s_{\#}^{-1}.
	\end{equation}
Let $E_Y \subseteq \SAut(\bar{Y}/X)$ denote the semidirect product of $G(\bar{k})$ and $\varsigma_Y(\Gamma_k)$. Then the pair $(\bar{Y},E_Y)$ is a descent datum on $X$ bound by $\lien(G)$ and the cohomology class $[E_Y] \in \H^2(k,\lien(G))$ is neutral. The converse also holds under mild conditions.

\begin{defn} \label{def:DescentTypeOfTorsor}
    The {\em descent type} of the torsor $Y \to X$ is
        \begin{equation*}
            \dtype(Y):=[\bar{Y},E_Y] \in \DType(X,\lien(G)).
        \end{equation*}
\end{defn}

\begin{prop} \label{prop:DescentType}
    Suppose that $X$ is quasi-projective, reduced, and geometrically connected. Let $L = (\bar{G},\kappa)$ be a $k$-kernel and $\lambda=[\bar{Y},E] \in \DType(X,L)$ a descent type on $X$ bound by $L$ (in particular, $\bar{Y}$ is quasi-projective).
	\begin{enumerate}
		\item \label{prop:DescentType1} There is a canonical bijection between the continuous homomorphic sections $\Gamma_k \to E$ and the equivalence classes of pair $(Y,G)$, where $G$ is a $k$-form of $L$ (that is, $\lien(G) = L$) and $Y \to X$ is a torsor under $G$ of descent type $\dtype(Y) = \lambda$. Here, we impose that two pairs $(Y,G)$ and $(Y',G')$ to be equivalent if $G' = G$ and the $X$-torsors $Y$ and $Y'$ under $G$ are isomorphic. In particular, $X$-torsors of descent type $\lambda$ exist if and only if the class $[E] \in \H^2(k,L)$ is neutral.
		
		\item \label{prop:DescentType2} If $X(k) \neq \varnothing$, then the class $[E] \in \H^2(k,L)$ is neutral.
		
		\item \label{prop:DescentType3} Let $Y \to X$ be a torsor under a $k$-form $G$ of $L$, of descent type $\dtype(Y) = \lambda$. If $Y' \to X$ is a torsor under a $k$-form $G'$ of $L$, then $\dtype(Y') = \lambda$ if and only if there exists a Galois cocycle $\sigma \in \Z^1(k,G)$ such that $G' = G^{\sigma}$ and the $X$-torsors $Y'$ and $Y^{\sigma}$ under $G^{\sigma}$ are isomorphic.
	\end{enumerate}
\end{prop}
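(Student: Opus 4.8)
The plan is to organize the argument around the single structure that governs all three parts: a descent datum $(\bar Y, E)$ is a topological group extension of $\Gamma_k$ by $\bar G(\bar k)$ sitting inside $\SAut(\bar Y/X)$, and a continuous homomorphic section $\Gamma_k \to E$ is exactly the data needed to run Galois descent on $\bar Y$. For part \ref{prop:DescentType1}, I would start from a continuous homomorphic section $\varsigma \colon \Gamma_k \to E$. Composing with $E \hookrightarrow \SAut(\bar Y/X)$ gives a semilinear action of $\Gamma_k$ on $\bar Y$; since $E$ is bound by $\kappa$, the induced outer action on $\bar G(\bar k)$ is $\kappa$, so the induced genuine semilinear action of $\Gamma_k$ on $\bar G$ (by transport through the torsor structure, using that $E$ normalizes $\bar G(\bar k)$ inside $\SAut(\bar Y/X)$) is a lift of $\kappa$ to a section of \eqref{eq:H2KernelAut}, i.e. defines a $k$-form $G$ with $\lien(G) = L$, and $\bar Y$ with the $\varsigma$-action is $G$-equivariant. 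Here is where quasi-projectivity and reducedness/geometric connectedness enter: by Galois descent (effectivity of descent data for quasi-projective schemes) the semilinear $\Gamma_k$-action on $\bar Y$ descends to a $k$-variety $Y$ with $\bar Y = Y_{\bar k}$, and the $G$-action and the map to $X$ descend likewise, producing a torsor $Y \to X$ under $G$. One checks $E_Y = E$ as subgroups of $\SAut(\bar Y/X)$ — the semidirect product of $\bar G(\bar k)$ with $\varsigma(\Gamma_k)$ is $E$ because $\varsigma$ is a homomorphic section and $\bar G(\bar k)$ is normal of the right index — so $\dtype(Y) = \lambda$. Conversely, a torsor $Y \to X$ under a $k$-form $G$ of $L$ with $\dtype(Y) = \lambda$ yields, after choosing an isomorphism $(\bar Y, E_Y) \cong (\bar Y, E)$ of descent data, the section $\varsigma_Y$ transported into $E$. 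I would then verify that these two constructions are mutually inverse up to the stated equivalence, and that two sections giving the ``same'' $Y$ (same $G$, isomorphic torsors) correspond under an isomorphism of descent data fixing $E$, i.e. differ by the way $\SAut(\bar Y/\bar X)$-conjugation acts — this bookkeeping is the main place to be careful. The final clause of \ref{prop:DescentType1} is then immediate: $X$-torsors of descent type $\lambda$ exist iff a continuous homomorphic section $\Gamma_k \to E$ exists iff $[E] \in \H^2(k,L)$ is neutral, by the definition of neutrality recalled on page \pageref{NeutralClassInH2}.

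For part \ref{prop:DescentType2}, suppose $x_0 \in X(k)$. Pulling back the torsor structure along $x_0$ is not available since we have no torsor yet, but pulling back the \emph{descent datum} is: the fibre $\bar Y_{x_0} := \bar Y \times_{\bar X, \bar x_0} \Spec \bar k$ is a $\bar k$-torsor under $\bar G$, hence a $\bar G(\bar k)$-set that is a principal homogeneous space, and the subgroup $E \subseteq \SAut(\bar Y/X)$ acts on it compatibly with $q \colon E \to \Gamma_k$. Concretely, restriction along $x_0$ gives a homomorphism $E \to \SAut(\bar Y_{x_0}/k)$ over $\Gamma_k$, and because $\bar G(\bar k)$ acts simply transitively on $\bar Y_{x_0}(\bar k)$, choosing any point $y_0 \in \bar Y_{x_0}(\bar k)$ identifies $\SAut(\bar Y_{x_0}/k)$ with the semidirect product $\bar G(\bar k) \rtimes_\kappa \Gamma_k$ in a way that splits the sequence. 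Pulling this splitting back to $E$, or rather observing that the composite $\Gamma_k \to \SAut(\bar Y_{x_0}/k) \xrightarrow{\text{ev}_{y_0}} \bar Y_{x_0}(\bar k)$ furnishes, via the free transitive action, a continuous section of $E \to \Gamma_k$: the map $s \mapsto$ (the unique $\alpha \in E$ over $s$ with $\alpha \cdot y_0 = {}^s y_0$) is a homomorphic section once one checks it lands in $E$ and is continuous in the modified weak topology (continuity follows since $\text{ev}_{y_0}$ and $q$ generate that topology). Hence $[E]$ is neutral, and one may additionally remark that $x_0$ then produces an actual lift $Y \to X$ by part \ref{prop:DescentType1}.

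For part \ref{prop:DescentType3}, fix the torsor $Y \to X$ under $G$ with $\dtype(Y) = \lambda$; this identifies $(\bar Y, E)$ with $(\bar Y, E_Y)$ and gives the reference section $\varsigma_Y \colon \Gamma_k \to E$. Any other torsor $Y' \to X$ under a $k$-form $G'$ of $L$ with $\dtype(Y') = \lambda$ provides, after an isomorphism of descent data $(\bar Y', E_{Y'}) \cong (\bar Y, E)$, a second continuous homomorphic section $\varsigma' \colon \Gamma_k \to E$. Two sections of the same extension by a (not-necessarily-abelian) group $\bar G(\bar k)$ differ by a $1$-cocycle: there is a continuous map $\sigma \colon \Gamma_k \to \bar G(\bar k)$ with $\varsigma'_s = \rho_{\sigma_s}^{-1} \circ \varsigma_{Y,s}$ (using the embedding $\bar G(\bar k) \hookrightarrow \Aut(\bar Y/\bar X)$, $g \mapsto \rho_g^{-1}$), and the homomorphism condition on both sections combined with \eqref{eq:DescentType1} forces $\sigma_{st} = \sigma_s \, {}^s\sigma_t$, so $\sigma \in \Z^1(k,G)$. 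Transporting the $\Gamma_k$-action $\varsigma'$ through the torsor structure, this is precisely the twisted action $s \mapsto \rho_{\sigma_s}^{-1} \circ s_\#$ defining $G^\sigma$ and $Y^\sigma$ as in the Notations: thus $G' = G^\sigma$ and the $k$-descent of $\bar Y$ along $\varsigma'$, which is $Y'$, is isomorphic as an $X$-torsor under $G^\sigma$ to $Y^\sigma$. The converse is a direct computation: if $Y' = Y^\sigma$ and $G' = G^\sigma$ for $\sigma \in \Z^1(k,G)$, then $E_{Y^\sigma}$ is the semidirect product of $\bar G(\bar k)$ with $\{\rho_{\sigma_s}^{-1} \circ s_\#\}_{s}$, which equals $E_Y$ because $\rho_{\sigma_s}^{-1} \in \bar G(\bar k) \subseteq E_Y$ and $s_\# \in E_Y$; hence $\dtype(Y^\sigma) = [\bar Y, E_Y] = \lambda$. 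I expect the main obstacle throughout to be the topological bookkeeping — verifying that the sections one writes down are continuous for the modified weak topology and that the various identifications of $\SAut$-subgroups are compatible — rather than any conceptual difficulty; the underlying mechanism is just Galois descent plus the standard ``sections of an extension form a torsor under $1$-cocycles'' principle, specialized to the topological non-abelian setting.
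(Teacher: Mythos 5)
Your proposal is correct and, for parts \ref{prop:DescentType1} and \ref{prop:DescentType3}, follows essentially the same route as the paper: the paper also obtains the section-to-torsor direction of \ref{prop:DescentType1} by Galois descent (delegating the details to \cite[Proposition 2.2 (3)]{HS2002Nonabelian}, which your sketch inlines, quasi-projectivity entering exactly where you say), and its proof of \ref{prop:DescentType3} is precisely your computation: extract $\sigma_s$ by comparing the two semilinear actions through a $\bar{G}$-equivariant isomorphism $\iota$, verify the cocycle condition via \eqref{eq:DescentType1}, identify $G'$ with $G^{\sigma}$, and descend $\iota$ to an isomorphism $Y^{\sigma} \cong Y'$; the forward direction ($E_{Y^{\sigma}} = E_Y$) is verbatim. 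The genuine divergence is part \ref{prop:DescentType2}: the paper simply cites \cite[Theorem 2.5]{HS2002Nonabelian}, whereas you inline the standard argument that a point $y_0$ in the fibre of $\bar{Y}$ over the geometric point induced by $x_0 \in X(k)$ splits $E \to \Gamma_k$. That argument works, but note one slip in your formulation: $\tensor[^s]{y}{_0}$ is undefined, since $\bar{Y}$ carries no Galois action {\em a priori}. The section should be $s \mapsto$ the unique $\alpha \in E$ above $s$ with $\alpha(y_0) = y_0$ (existence and uniqueness by simple transitivity of $\bar{G}(\bar{k})$ on the fibre, which every $\alpha \in E$ preserves because $x_0$ is $k$-rational); equivalently, the stabilizer of $y_0$ in $E$ is an open subgroup mapping bijectively, continuously and openly onto $\Gamma_k$, which gives the continuity you need without further checks. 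With that correction your treatment of \ref{prop:DescentType2} is a complete proof of the statement the paper quotes; and the bookkeeping you flag in \ref{prop:DescentType1} (which sections give equivalent pairs $(Y,G)$) is dealt with in the paper only through the abusive identification $Y_{\bar{k}} = \bar{Y}$, $E_Y = E$, so your caution there is at least as explicit as the paper's own proof.
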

\begin{proof}
\begin{enumerate}
    \item As we have seen above, a torsor $Y \to X$ under a $k$-form $G$ of $L$ defines a continuous homomorphic section $\varsigma_Y: \Gamma_k \to E$ (here, we abusively identity $\bar{Y}$ and $\bar{G}$ to the respective base changes $Y_{\bar{k}}$ and $G_{\bar{k}}$, hence $E_Y = E$ whenever $\dtype(Y) = \lambda$). Conversely, as in the proof of \cite[Proposition 2.2 (3)]{HS2002Nonabelian}, a continuous homomorphic section $\Gamma_k \to E$ gives rise to a torsor $Y \to X$ under a $k$-form $G$ of $L$ (in {\em loc. cit.}, the authors use the weak topology on $\SAut(\bar{Y}/X)$ instead of our modified weak topology, but we have seen that this does not matter). One checks that these two constructions are converse to each other.

    \item See \cite[Theorem 2.5]{HS2002Nonabelian}.

    \item We make the following identifications: $\bar{Y} = Y_{\bar{k}}$, $\bar{G} = G_{\bar{k}}$, and $E = E_Y$, the semidirect product of $\bar{G}(\bar{k})$ and $\varsigma_Y(\Gamma_k) = \{s_{\#}: s \in \Gamma_k\}$ inside $\SAut(\bar{Y}/X)$ (as always, $s \mapsto s_{\#}$ denotes the semilinear action of $\Gamma_k$ on $\bar{Y}$ defined {\em via} the $k$-form $Y$). If $\sigma \in \Z^1(k,G)$ is any Galois cocycle, then clearly $\bar{Y}^\sigma = \bar{Y}$ and $\lien(G^\sigma) = \lien(G) = L$. Furthermore, the twisted action of $\Gamma_k$ on $\bar{Y}^\sigma$ is $s \mapsto \rho_{\sigma_s}^{-1} \circ s_{\#}$. Thus, $E_{Y^\sigma} \subseteq \SAut(\bar{Y}/X)$ is the semidirect product of $\bar{G}(\bar{k})$ and $\varsigma_{Y^{\sigma}}(\Gamma_k) = \{\rho_{\sigma_s}^{-1} \circ s_{\#}: s \in \Gamma_k\}$, which is visibly equal to $E_Y$. Hence, $\dtype(Y^{\sigma}) = \lambda$.

    Conversely, let $Y' \to X$ be a torsor under a $k$-form $G'$ of $L$, with $\dtype(Y') = \lambda$. By definition, there exists a $\bar{G}$-equivariant $\bar{X}$-isomorphism $\iota: \bar{Y} \to \bar{Y}'$ such that the diagram
	\begin{equation} \label{eq:DiagramDescentType1}
		\xymatrix{
			1 \ar[r] & G(\bar{k}) \ar[r] \ar@{=}[d] & E \ar[rr] \ar[d]^-{\alpha \mapsto \iota \circ \alpha \circ \iota^{-1}} && \Gamma_k \ar[r] \ar@{=}[d] & 1 \\
			1 \ar[r] & G'(\bar{k}) \ar[r] & E' \ar[rr] && \Gamma_k \ar[r] & 1
		}
	\end{equation}
	commutes. Write $(s,g) \mapsto \tensor[^s]{g}{}$
	for the action of $\Gamma_k$ on $\bar{G}(\bar{k})$ induced by the $k$-form $G$. 
	Unwinding the commutativity of the left square of \eqref{eq:DiagramDescentType1} yields
	\begin{equation} \label{eq:DescentType2}
		\forall g \in G(\bar{k}), \quad \rho'_g = \iota \circ \rho_g \circ \iota^{-1}.
	\end{equation}
	where $\rho'_g: \bar{Y}' \to \bar{Y}'$ is the $\bar{X}$-isomorphism $y' \mapsto y' \cdot g$. Write $s \mapsto s_{\#}'$ for the action of $\Gamma_k$ on $\bar{Y}'$. That the right square of \eqref{eq:DiagramDescentType1} commutes implies that $s_{\#}$ and $\iota^{-1} \circ s'_{\#} \circ \iota$ are both mapped to $s \in \Gamma_k$, so that we may write
	\begin{equation} \label{eq:DescentType3}
		\iota^{-1} \circ s'_{\#} \circ \iota = \rho_{\sigma_s}^{-1} \circ s_{\#}
	\end{equation}
	for some (unique) $\sigma_s \in G(\bar{k})$. The map $s \mapsto \sigma_s$ is continuous. Let us verify that it is a cocycle. Indeed, let $s,t \in \Gamma_k$. Since $(st)'_{\#} = s'_{\#} \circ t'_{\#}$, it follows from \eqref{eq:DescentType3} that 
	\begin{equation*}
		\rho_{\sigma_{st}}^{-1} \circ s_{\#} \circ t_{\#} = \rho_{\sigma_{st}}^{-1} \circ (st)_{\#} = \rho_{\sigma_{s}}^{-1} \circ s_{\#} \circ \rho_{\sigma_{t}}^{-1} \circ t_{\#},
	\end{equation*}
	or $\rho_{\sigma_{st}}^{-1} = \rho_{\sigma_{s}}^{-1} \circ s_{\#} \circ \rho_{\sigma_{t}}^{-1} \circ s_{\#}^{-1} = \rho_{\sigma_{s}}^{-1} \circ \rho_{\tensor[^s]{\sigma}{_t}}^{-1} = \rho_{\sigma_s \tensor[^s]{\sigma}{_\tau}}^{-1}$, in view of \eqref{eq:DescentType1}. Hence, $\sigma_{st} = \sigma_s \tensor[^s]{\sigma}{_t}$, or $\sigma \in \Z^1(k,G)$. Now, let us show that $G' = G^\sigma$. Indeed, let $s \in \Gamma_k$ and $g \in G(\bar{k})$, then
	\begin{align*}
		\rho'_{s \cdot_\sigma g} & = \iota \circ \rho_{s \cdot_\sigma g} \circ \iota^{-1}, & \text{by \eqref{eq:DescentType2}},\\ 
		& = \iota \circ \rho_{\sigma_s \tensor[^s]{g}{} \sigma_s^{-1}} \circ \iota^{-1}, & \text{by definition of $\cdot_\sigma$}, \\
		& = (\iota \circ \rho_{\sigma_s}^{-1}) \circ \rho_{\tensor[^s]{g}{}} \circ (\rho_{\sigma_s} \circ \iota^{-1}) \\
		& = (s'_{\#} \circ \iota \circ s_{\#}^{-1}) \circ (s_{\#} \circ \rho_g \circ s_{\#}^{-1}) \circ (s_{\#} \circ \iota^{-1} \circ (s'_{\#})^{-1}), & \text{by \eqref{eq:DescentType1} and \eqref{eq:DescentType3}}, \\
		& = s'_{\#} \circ (\iota \circ \rho_g \circ \iota^{-1}) \circ (s'_{\#})^{-1} \\
		& = s'_{\#} \circ \rho'_g \circ (s'_{\#})^{-1}, & \text{by \eqref{eq:DescentType2}}.
	\end{align*}
	This equality means precisely that the action of $\Gamma_k$ on $\bar{G}(\bar{k})$ induced by the $k$-form $G'$ is given by $(s,g) \mapsto s \cdot_\sigma g$, or $G' = G^\sigma$. Finally, for each $s \in \Gamma_k$, the diagram
	\begin{equation*}
		\xymatrix{
			\bar{Y} \ar[rr]^{\iota} \ar[d]^{\rho_{\sigma_s}^{-1} \circ s_{\#}} && \bar{Y}' \ar[d]^{s'_{\#}} \\
			\bar{Y} \ar[rr]^{\iota} && \bar{Y}'
		}
	\end{equation*}
    commutes, in view of \eqref{eq:DescentType3}. By the very definition of twisting, $s \mapsto \rho_{\sigma_s}^{-1} \circ s_{\#}$ is just the action of $\Gamma_k$ on $\bar{Y}$ defined {\em via} the $k$-form $Y^{\sigma}$. Thus, $\iota: \bar{Y} \to \bar{Y}'$ descends into a $G^{\sigma}$-equivariant $X$-isomorphism $Y^{\sigma} \to Y'$.
\end{enumerate}
\end{proof}

\subsection{Functoriality} \label{subsec:Functoriality}

Let $k$ be a perfect field. Naturally, we expect the set $\DType(X,L)$ to be functorial in $X$ and in $L$.

\begin{cons} [Pullback of descent types] \label{cons:PullbackOfDescentType}
    Let $f: X' \to X$ be a morphism between non-empty $k$-varieties and let $L = (\bar{G},\kappa)$ be a $k$-kernel. Then $f$ induces a pullback map $f^\ast: \DType(X,L) \to \DType(X',L)$ as follows. Let $\lambda = [\bar{Y},E] \in \DType(X,L)$. For each $s$-semilinear automorphism $\alpha \in \SAut(\bar{Y}/X)$ (where $s \in \Gamma_k$), define $f^\ast \alpha:=\alpha \times_{\bar{X}} s_{\#}'$ (where $s_{\#}'$ denotes the natural semilinear Galois action on $\bar{X}'$), then $f^\ast \alpha \in \SAut(f^\ast \bar{Y}/X)$ (where $f^\ast \bar{Y} = \bar{Y} \times_{\bar{X}} \bar{X}'$). We obtain a continuous homomorphism $f^\ast: \SAut(\bar{Y}/X) \to \SAut(f^\ast \bar{Y}/X')$. Let $f^\ast E$ be the image of $E$ by $f^\ast$, then $f^\ast \lambda:=[f^\ast \bar{Y}, f^\ast E]$ is a descent type on $X'$ bound by $L$. It is clear that $[E] = [f^\ast E] \in \H^2(k,L)$. If $Y \to X$ is a torsor under a $k$-form $G$ of $L$, then $\dtype(f^\ast Y) = f^\ast \dtype(Y)$. Finally, $\id_X^\ast = \id_{\DType(X,L)}$ and $(f \circ f')^\ast = (f')^\ast \circ f^\ast$ for any morphism $f': X'' \to X'$ between non-empty $k$-varieties. We also note that, for any field extension $K/k$ (with $K$ perfect), a similar construction yields a restriction map $\DType(X,L) \to \DType(X_K,L_K)$.
\end{cons}

As we have seen in the previous paragraph, ``functoriality'' in $L$ for the non-abelian cohomology set $\H^2(k,L)$ is more delicate and can only defined partially in terms of relations. This is also the case for descent types.

\begin{cons} [Pushforward of descent types]  \label{cons:PushforwardOfDescentType}
	Let $L = (\bar{G},\kappa)$ and $L' = (\bar{G}',\kappa')$ be $k$-kernels, and let $\varphi: L \to L'$ be a morphism. We define a relation
		\begin{equation*}
			\varphi_\ast: \DType(X,L) \multimap \DType(X,L'),
		\end{equation*}
    where a descent type $[\bar{Y}',E'] \in \DType(X,L')$ is in relation with another descent type $[\bar{Y},E] \in \DType(X,L)$ if there exists a morphism of descent data $(\bar{Y},E) \to (\bar{Y}',E')$ compatible with $\varphi$ in the sense of Definition \ref{defn:DescentType} \ref{defn:DescentType2}. In this case, the class $[E']$ is in relation $\varphi_\ast: \H^2(k,L) \multimap \H^2(k,L')$ with $[E]$. Furthermore, $\id_{L\ast}$ is the relation ``$=$'' on $\DType(X,L)$. If $\varphi': L' \to L''$ is another morphism of $k$-kernels, then $(\varphi' \circ \varphi)_\ast$ is the composite relation of $\varphi'_\ast$ and $\varphi_\ast$. If $G$ and $G'$ are respective $k$-forms of $L$ and $L'$ such that $\varphi$ descends into a morphism $\varphi: G \to G'$ of algebraic $k$-groups, and if $Y \to X$ is a torsor under $G$, then $\dtype(\varphi_\ast Y)$ is in relation $\varphi_\ast$ with $\dtype(Y)$.
\end{cons}

Let us show that, in the case where $\varphi: 
\bar{G} \to \bar{G}'$ is surjective or the underlying $\bar{k}$-group $\bar{G'}$ of $L'$ is commutative, the relation $\varphi_\ast$ in Construction \ref{cons:PushforwardOfDescentType} is in fact a map.

\begin{lemm} \label{lemm:PushforwardOfDescentType1}
    Let $L = (\bar{G},\kappa)$ and $L' = (\bar{G}',\kappa')$ be $k$-kernels, and let $\varphi: L \to L'$ be a surjective morphism. Let $\lambda = [\bar{Y},E] \in \DType(X,L)$, $\bar{H}:=\Ker(\bar{G} \xrightarrow{\varphi} \bar{G}')$, and $\bar{Z}:=\varphi_\ast \bar{Y} = \bar{Y} / \bar{H}$. For each $s \in \Gamma_k$, any $s$-semilinear automorphism $\alpha \in E$ induces a unique $s$-semilinear automorphism $\varphi_\ast \alpha \in \SAut(\bar{Z}/X)$ fitting in a commutative diagram
        \begin{equation*}
            \xymatrix{
                \bar{Y} \ar[r]^{\alpha} \ar[d]^{\pi} & \bar{Y} \ar[d]^{\pi} \\
                \bar{Z} \ar[r]^{\varphi_\ast \alpha} & \bar{Z},
            }
        \end{equation*} 
    where $\pi: \bar{Y} \to \bar{Z}$ is the canonical $\varphi$-equivariant $\bar{X}$-morphism. Let $\varphi_\ast E:=\{\varphi_\ast \alpha : \alpha \in E\}$, then $\varphi_\ast \lambda:=[\bar{Z},\varphi_\ast E]$ is the unique element of $\DType(X,L')$ which is in relation $\varphi_\ast$ with $\lambda$.
\end{lemm}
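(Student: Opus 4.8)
The plan is to establish the two assertions of the lemma in order, reducing everything to the fact that $\bar{H}=\Ker(\varphi)$ is a normal subgroup of $\bar{G}$ whose action on $\bar{Y}$ is normalised by the elements of $E$. For the first assertion, fix $\alpha\in E$ lying over $s\in\Gamma_k$. Since $g\mapsto\rho_g^{-1}$ embeds $\bar{G}(\bar{k})$ into $E$, conjugation by $\alpha$ satisfies $\alpha\circ\rho_g\circ\alpha^{-1}=\rho_{\theta_\alpha(g)}$ for a (scheme) group automorphism $\theta_\alpha$ of $\bar{G}$ which, because $(\bar{Y},E)$ is bound by $L$, represents $\kappa(s)$ in $\SOut(\bar{G}/k)$. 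Using that $\varphi$ is a morphism of $k$-kernels, pick lifts $\phi\colon\Gamma_k\to\SAut^{\gr}(\bar{G}/k)$ and $\phi'\colon\Gamma_k\to\SAut^{\gr}(\bar{G}'/k)$ of $\kappa$ and $\kappa'$ with $\varphi\circ\phi_s=\phi_s'\circ\varphi$; then $\phi_s$ carries $\bar{H}=\Ker(\varphi)$ to itself, and writing $\theta_\alpha=\inn(g_\alpha)\circ\phi_s$ for some $g_\alpha\in\bar{G}(\bar{k})$ (legitimate by boundedness, $\bar{H}$ being normal) we conclude that $\theta_\alpha$ preserves the subgroup scheme $\bar{H}$. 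Hence $\alpha$ normalises the $\bar{H}$-action on $\bar{Y}$, so $\pi\circ\alpha$ is $\bar{H}$-invariant (on $\bar{k}$-points $\pi\circ\alpha\circ\rho_h=\pi\circ\rho_{\theta_\alpha(h)}\circ\alpha=\pi\circ\alpha$, and this propagates over the subgroup scheme $\bar{H}$) and therefore factors through the fppf quotient $\pi\colon\bar{Y}\to\bar{Y}/\bar{H}=\bar{Z}$, producing $\varphi_\ast\alpha$ with $\varphi_\ast\alpha\circ\pi=\pi\circ\alpha$; uniqueness is immediate because $\pi$ is an epimorphism. The remaining checks — that $\varphi_\ast\alpha$ is $s$-semilinear and lies over $X$ (both obtained by cancelling $\pi$), that it is invertible with inverse $\varphi_\ast(\alpha^{-1})$, that $\alpha\mapsto\varphi_\ast\alpha$ is a group homomorphism $E\to\SAut(\bar{Z}/X)$ (again by uniqueness of descent), and that it is continuous for the modified weak topology (test against the generators $\ev_{\pi(y)}$, via $\ev_{\pi(y)}(\varphi_\ast\alpha)=\pi(\ev_y(\alpha))$, and $q$) — I will dispatch quickly. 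In particular $\varphi_\ast E:=\varphi_\ast(E)$ is a subgroup of $\SAut(\bar{Z}/X)$.

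For the second assertion I first verify that $(\bar{Z},\varphi_\ast E)$ is a descent datum on $X$ bound by $L'$. That $\bar{Z}\to\bar{X}$ is a torsor under $\bar{G}'=\bar{G}/\bar{H}$ is standard. The homomorphism $\varphi_\ast\colon E\to\varphi_\ast E$ is surjective with kernel $\{\rho_g^{-1}:\varphi(g)=1\}\cong\bar{H}(\bar{k})$ (using $\Ker(E\to\Gamma_k)=\bar{G}(\bar{k})$ and that $\bar{G}'$ acts freely on $\bar{Z}$), so it identifies $\varphi_\ast E$ with $E/\bar{H}(\bar{k})$ and yields an exact sequence $1\to\bar{G}'(\bar{k})\to\varphi_\ast E\to\Gamma_k\to1$, where $\bar{G}'(\bar{k})=\bar{G}(\bar{k})/\bar{H}(\bar{k})$ since $\varphi$ is surjective on $\bar{k}$-points. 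Here $\bar{G}'(\bar{k})$ is discrete (being so in $\SAut(\bar{Z}/X)$), and $\varphi_\ast E\to\Gamma_k$ is continuous (restriction of $q$) and open: for $V\subseteq\varphi_\ast E$ open one has $q(V)=q_E(\varphi_\ast^{-1}(V))$ with $\varphi_\ast^{-1}(V)$ open in $E$ and $q_E\colon E\to\Gamma_k$ open, so $q(V)$ is open. Boundedness by $L'$ is obtained by pushing forward: conjugation by $\varphi_\ast\alpha$ on $\bar{G}'(\bar{k})$ is $\varphi\circ\theta_\alpha=\theta'_{\varphi_\ast\alpha}\circ\varphi$, and $\theta_\alpha\equiv\phi_s$, $\varphi\circ\phi_s=\phi_s'\circ\varphi$ force $\theta'_{\varphi_\ast\alpha}\equiv\phi_s'\equiv\kappa'(s)$ modulo inner automorphisms. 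Thus $\varphi_\ast\lambda:=[\bar{Z},\varphi_\ast E]\in\DType(X,L')$ is well defined (its independence of the chosen representative of $\lambda$ is functorial: an isomorphism $(\bar{Y},E)\to(\bar{Y}_1,E_1)$ descends along the $\pi$'s), and $(\pi,\varphi_\ast)$ is a morphism of descent data compatible with $\varphi$ by the first assertion, so $\varphi_\ast\lambda$ is in relation $\varphi_\ast$ with $\lambda$.

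It remains to prove uniqueness. Let $[\bar{Y}',E']\in\DType(X,L')$ be any descent type in relation $\varphi_\ast$ with $\lambda$, witnessed by a morphism $(\iota,\varpi)\colon(\bar{Y},E)\to(\bar{Y}',E')$ compatible with $\varphi$. Since $\iota$ is $\varphi$-equivariant it kills $\bar{H}$, hence factors as $\iota=\bar{\iota}\circ\pi$ with $\bar{\iota}\colon\bar{Z}\to\bar{Y}'$ a $\bar{G}'$-equivariant $\bar{X}$-morphism, and any such morphism of $\bar{G}'$-torsors over $\bar{X}$ is an isomorphism. From $\varpi_\alpha\circ\iota=\iota\circ\alpha$, $\pi\circ\alpha=(\varphi_\ast\alpha)\circ\pi$, and $\pi$ being an epimorphism, one gets $\bar{\iota}\circ(\varphi_\ast\alpha)\circ\bar{\iota}^{-1}=\varpi_\alpha$ for all $\alpha\in E$; as $\varpi\colon E\to E'$ is surjective (four-lemma applied to the defining diagram, since $\varphi$ is surjective on $\bar{k}$-points and the $\Gamma_k$-terms agree), $\bar{\iota}$ conjugates $\varphi_\ast E$ onto $E'$. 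Therefore $\bar{\iota}$ is an isomorphism of descent data bound by $L'$ and $[\bar{Y}',E']=[\bar{Z},\varphi_\ast E]=\varphi_\ast\lambda$.

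I expect the main obstacle to be making the descent of $\alpha$ along $\pi$ rigorous at the level of schemes, that is, upgrading "$\theta_\alpha$ preserves $\bar{H}(\bar{k})$" to "$\alpha$ normalises the $\bar{H}$-action on $\bar{Y}$ as schemes''; the crucial input there is the compatibility $\varphi\circ\phi_s=\phi_s'\circ\varphi$ built into the definition of a morphism of $k$-kernels (which forces $\phi_s$, hence $\theta_\alpha$, to preserve the subgroup scheme $\bar{H}$), together with smoothness of $\bar{G}$. Everything else is bookkeeping, apart from the minor topological point that $\varphi_\ast E\to\Gamma_k$ is open, which follows formally from the openness of $E\to\Gamma_k$ and the continuity and surjectivity of $\varphi_\ast$.
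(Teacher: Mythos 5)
Your proposal is correct and follows essentially the same route as the paper: you use the boundedness of $E$ together with the compatibility $\varphi\circ\phi_s=\phi_s'\circ\varphi$ to see that conjugation by $\alpha$ preserves $\bar{H}$, descend $\alpha$ along $\pi:\bar{Y}\to\bar{Y}/\bar{H}$, check that $\varphi_\ast E$ is an extension of $\Gamma_k$ by $\bar{G}'(\bar{k})$ using surjectivity of $\varphi$, and prove uniqueness by factoring any witnessing $\varphi$-equivariant morphism $\iota$ as $\bar{\iota}\circ\pi$ and conjugating $\varphi_\ast E$ into $E'$. The extra verifications you include (openness of $\varphi_\ast E\to\Gamma_k$, boundedness by $L'$, surjectivity of $\varpi$) are points the paper leaves implicit, not a different argument.
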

\begin{proof}
    Since $E$ is an extension of $\Gamma_k$ by $\bar{G}(\bar{k})$, we have a semilinear action $\psi: E \to \SAut^{\gr}(\bar{G}/k)$ by conjugation in $E$, that is, $\alpha \circ \rho_g \circ \alpha^{-1} = \rho_{\psi_\alpha(g)}$ for all $g \in \bar{G}(\bar{k})$ and $\alpha \in E$. That $\varphi$ is a morphism of $k$-kernels means there are respective continuous (set-theoretic) liftings $\phi: \Gamma_k \to \SAut^{\gr}(\bar{G}/k)$ and $\phi': \Gamma_k \to \SAut^{\gr}(\bar{G}'/k)$ of $\kappa$ and $\kappa'$ such that $\varphi \circ \phi_s = \phi'_s \circ \varphi$ for all $s \in \Gamma_k$. Let $\alpha \in E$ and let $s$ be its image in $\Gamma_k$, then $\psi_\alpha = \inn(g_0) \circ \phi_s$ for some $g_0 \in \bar{G}(\bar{k})$. It follows that $\psi_\alpha$ stabilizes $\bar{H}$. For $y \in \bar{Y}$ and $h \in \bar{H}$, we have $\alpha(y \cdot h) = \alpha(y) \cdot \psi_{\alpha}(h)$, hence $\pi(\alpha(y \cdot h)) = \pi(\alpha(y))$. It follows that $\alpha$ induces an $s$-semilinear automorphism $\varphi_\ast \alpha \in \SAut(\bar{Z}/X)$ as claimed. Uniqueness follows from surjectivity of $\pi$. In particular, the formation 
        \begin{equation*}
            E \to \SAut(\bar{Z}/X), \quad \alpha \mapsto \varphi_\ast \alpha
        \end{equation*}
    is a continuous homomorphism. Furthermore, for each $g \in \bar{G}(\bar{k})$, the automorphism $\varphi_\ast \rho_g: \bar{Z} \to \bar{Z}$ coincides with the $\bar{X}$-isomorphism $\rho_{\varphi(g)}$ taking $z$ to $z \cdot \varphi(g)$. Since $\varphi$ is surjective, it follows that $\varphi_\ast E$ contains $\{\rho_{g'} : g' \in \bar{G}'(\bar{k}) \}$ and that the sequence
        \begin{equation*}
            1 \to \bar{G}'(\bar{k}) \to \varphi_\ast E \to \Gamma_k \to 1
        \end{equation*}
    is exact. In addition, it is immediate from Construction \ref{cons:PushforwardOfDescentType} that $[\bar{Z},\varphi_\ast E]$ is in relation $\varphi_\ast$ with $\lambda$.

    Suppose that $[\bar{Y}',E'] \in \DType(X,L')$ is a descent type in relation $\varphi_\ast$ with $\lambda$. By definition, there is an $\varphi$-equivariant $\bar{X}$-morphism $\iota: \bar{Y} \to \bar{Y}'$ and a continuous homomorphism $\varpi: E \to E'$ fitting in a commutative diagram
		\begin{equation*}
			\xymatrix{
				1 \ar[r] & \bar{G}(\bar{k}) \ar[r] \ar[d]^{\varphi} & E \ar[r] \ar[d]^{\varpi} & \Gamma_k \ar@{=}[d] \ar[r] & 1 \\
				1 \ar[r] & \bar{G}'(\bar{k}) \ar[r] & E' \ar[r] & \Gamma_k \ar[r] & 1,
			}
		\end{equation*} 
    such that $\iota \circ \alpha = \varpi_{\alpha} \circ \iota$ for all $\alpha \in E$. Then $\iota$ induces a $\bar{G}'$-equivariant $\bar{X}$-isomorphism $i: \bar{Z} \xrightarrow{\cong} \bar{Y}'$, that is, $\iota = i \circ \pi$. For any $\alpha \in E$, we have
        \begin{equation*}
		i \circ \varphi_\ast \alpha \circ i^{-1} \circ \iota = i\circ \varphi_\ast \alpha \circ \pi = i\circ \pi \circ \alpha = \iota \circ \alpha = \varpi_\alpha \circ \iota.
	\end{equation*}
    Since $\iota$ is surjective, it follows that $i \circ \varphi_\ast \alpha \circ i^{-1} = \varpi_\alpha \in E'$, hence $[\bar{Y}',E'] = [\bar{Z},\varphi_\ast E] = \varphi_\ast \lambda$.
\end{proof}

\begin{lemm} \label{lemm:PushforwardOfDescentType2}
    Let $L = (\bar{G},\kappa)$ be a $k$-kernel, $G'$ a smooth commutative algebraic $k$-group, and $\varphi: L \to \lien(G')$ a morphism. Let $\lambda = [\bar{Y},E] \in \DType(X,L)$, and $\bar{Z}:=\varphi_\ast \bar{Y} = \bar{Y} \times_{\bar{k}}^{\bar{G}} \bar{G}'$, that is, the quotient of $\bar{Y} \times_{\bar{k}} \bar{G}'$ by the left action of $\bar{G}$ defined by $g \cdot (y,g') := (y \cdot g^{-1}, \varphi(g) g')$. For each $s \in \Gamma_k$ and each $s$-semilinear automorphism $\alpha \in E$, the automorphism $\alpha \times_{\bar{k}} s_{\#} \in \SAut(\bar{Y} \times_{\bar{k}} \bar{G}'/X)$ induces a unique $s$-semilinear automorphism $\varphi_\ast \alpha \in \SAut(\bar{Z}/X)$ fitting in a commutative diagram
        \begin{equation*}
			\xymatrix{
				\bar{Y} \ar[rr]^{\alpha} \ar[d]^{y \mapsto (y,1)} && \bar{Y} \ar[d]^{y \mapsto (y,1)} \\
				\bar{Y} \times_{\bar{k}} \bar{G}' \ar[rr]^{\alpha \times_{\bar{k}} s_{\#}} \ar[d]^{\pi} && \bar{Y} \times_{\bar{k}} \bar{G}' \ar[d]^{\pi} \\
				\bar{Z} \ar[rr]^{\varphi_\ast \alpha} &&  \bar{Z}
			}
		\end{equation*}
    where $\pi: \bar{Y} \to \bar{Z}$ is the canonical projection. Let $\varphi_\ast E$ be subgroup of $\SAut(\bar{Z}/X)$ generated by $\{\varphi_\ast \alpha : \alpha \in E\}$ and $\{\rho_{g'}: g' \in G'(\bar{k})\}$, then $\varphi_\ast \lambda:=[\bar{Z},\varphi_\ast E]$ is the unique element of $\DType(X,\lien(G'))$ which is in relation $\varphi_\ast$ with $\lambda$.
\end{lemm}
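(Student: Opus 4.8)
The plan is to mimic the proof of Lemma~\ref{lemm:PushforwardOfDescentType1}, the single new phenomenon being that $\varphi\colon\bar{G}\to\bar{G}'$ need no longer be surjective; so the contracted product $\bar{Z}=\bar{Y}\times_{\bar{k}}^{\bar{G}}\bar{G}'$ takes the place of the quotient $\bar{Y}/\bar{H}$ there, and the translations $\{\rho_{g'}:g'\in G'(\bar{k})\}$ have to be thrown in alongside $\{\varphi_\ast\alpha:\alpha\in E\}$ in order to recover a full topological extension of $\Gamma_k$ by $\bar{G}'(\bar{k})$.

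First I would record the key identity. As in the proof of Lemma~\ref{lemm:PushforwardOfDescentType1}, conjugation inside $E$ yields a continuous semilinear action $\psi\colon E\to\SAut^{\gr}(\bar{G}/k)$ with $\alpha\circ\rho_g\circ\alpha^{-1}=\rho_{\psi_\alpha(g)}$, and $\psi_\alpha$ lifts $\kappa(q(\alpha))$ modulo $\Inn(\bar{G})$. Since $\bar{G}'$ is commutative we have $\Inn(\bar{G}')=1$, so $\lien(G')$ is literally the section $s\mapsto s_\#$ attached to the unique $k$-form $G'$; picking compatible continuous liftings $\phi$ of $\kappa$ and $\phi'=(s\mapsto s_\#)$ of $\lien(G')$ with $\varphi\circ\phi_s=s_\#\circ\varphi$ (definition of a morphism of $k$-kernels) and writing $\psi_\alpha=\inn(g_0)\circ\phi_s$ with $s=q(\alpha)$, the triviality of $\inn(\varphi(g_0))$ on $\bar{G}'$ forces $\varphi\circ\psi_\alpha=s_\#\circ\varphi$ on $\bar{G}'(\bar{k})$, that is, $\varphi(\psi_\alpha(g))={}^{s}\varphi(g)$ for all $g$. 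With this identity in hand I can define $\varphi_\ast\alpha\colon\bar{Z}\to\bar{Z}$ on points by $[y,g']\mapsto[\alpha(y),{}^{s}g']$: well-definedness is a one-line orbit computation using exactly this identity, $s$-semilinearity over $X$ comes from that of $\alpha$, and uniqueness (together with commutativity of the diagram in the statement) is immediate since the quotient map $\bar{Y}\times_{\bar{k}}\bar{G}'\to\bar{Z}$ is faithfully flat, hence an epimorphism, and $\varphi_\ast\alpha$ is pinned down as the map induced by $\alpha\times_{\bar{k}}s_\#$. The assignment $\alpha\mapsto\varphi_\ast\alpha$ is visibly a homomorphism $E\to\SAut(\bar{Z}/X)$, and its continuity is a routine check against the maps $\ev_z$ ($z\in\bar{Z}(\bar{k})$) and $q$ defining the modified weak topology, using that the $\bar{k}$-points of $\bar{Z}$ and $\bar{G}'$ in play are defined over a finite subextension of $\bar{k}/k$.

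Next I would analyse $\varphi_\ast E$. A direct computation gives $\varphi_\ast(\rho_g^{-1})=\rho_{\varphi(g)}^{-1}$ for $g\in\bar{G}(\bar{k})$ and $(\varphi_\ast\alpha)\circ\rho_{g'}\circ(\varphi_\ast\alpha)^{-1}=\rho_{\,{}^{s}g'}$ for $s=q(\alpha)$; the latter shows that $\{\rho_{g'}:g'\in G'(\bar{k})\}$ is normalised by every $\varphi_\ast\alpha$, so $\varphi_\ast E=\{(\varphi_\ast\alpha)\circ\rho_{g'}:\alpha\in E,\ g'\in G'(\bar{k})\}$, the restriction of $\SAut(\bar{Z}/X)\to\Gamma_k$ carries $(\varphi_\ast\alpha)\circ\rho_{g'}$ to $q(\alpha)$, is surjective, and has kernel precisely $\{\rho_{g'}:g'\in G'(\bar{k})\}$ (if $q(\alpha)=1$ then $\alpha=\rho_g^{-1}$, so $(\varphi_\ast\alpha)\circ\rho_{g'}=\rho_{\varphi(g)^{-1}g'}$). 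Discreteness of $\bar{G}'(\bar{k})$ in $\varphi_\ast E$ is the usual torsor argument. For openness of $q\colon\varphi_\ast E\to\Gamma_k$ — the one point that might look delicate — I would use the factorisation $q\circ\varphi_\ast=q_E$ with $\varphi_\ast\colon E\to\varphi_\ast E$ continuous surjective and $q_E\colon E\to\Gamma_k$ open: for $U\subseteq\varphi_\ast E$ open, $q(U)=q_E(\varphi_\ast^{-1}(U))$ is open. Hence $\varphi_\ast E$ is a topological extension of $\Gamma_k$ by $\bar{G}'(\bar{k})$, and the conjugation formula shows it is bound by $\lien(G')$ (the induced action on $\bar{G}'(\bar{k})$ being $s\mapsto{}^{s}(-)$). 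Finally $\iota\colon\bar{Y}\to\bar{Z}$, $y\mapsto[y,1]$, is $\varphi$-equivariant and, together with $\varpi=\varphi_\ast$, is a morphism of descent data compatible with $\varphi$; so $[\bar{Z},\varphi_\ast E]$ is in relation $\varphi_\ast$ with $\lambda$ (cf.\ Construction~\ref{cons:PushforwardOfDescentType}).

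For uniqueness, suppose $[\bar{Y}',E']\in\DType(X,\lien(G'))$ is in relation $\varphi_\ast$ with $\lambda$, witnessed by a $\varphi$-equivariant $\bar{X}$-morphism $\iota\colon\bar{Y}\to\bar{Y}'$ and a compatible continuous homomorphism $\varpi\colon E\to E'$. The $\bar{G}$-invariant $\bar{X}$-morphism $(y,g')\mapsto\iota(y)\cdot g'$ descends to a $\bar{G}'$-equivariant $\bar{X}$-morphism $i\colon\bar{Z}\to\bar{Y}'$ with $\iota=i\circ(y\mapsto[y,1])$, necessarily an isomorphism of torsors. The hard part here — by contrast with Lemma~\ref{lemm:PushforwardOfDescentType1}, where $\bar{Y}\to\bar{Z}$ is surjective — is that $y\mapsto[y,1]$ is not surjective, so one cannot conclude $i\circ(\varphi_\ast\alpha)\circ i^{-1}=\varpi_\alpha$ merely by cancelling $\iota$. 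Instead I would note that both sides are $\bar{G}'$-semilinear maps of $\bar{Y}'$ with the same twist $s=q(\alpha)$ (the right-hand side because $E'$ is bound by $\lien(G')$), that they agree on $\iota(\bar{Y})$ by the defining property of $\varphi_\ast\alpha$ and the relation $\iota\circ\alpha=\varpi_\alpha\circ\iota$, and that every $\bar{k}$-point of $\bar{Y}'$ has the form $i([y,g'])=\iota(y)\cdot g'$; since $\bar{Y}'$ is reduced, this forces the two maps to coincide. Combined with $i\circ\rho_{g'}\circ i^{-1}=\rho_{g'}$, it follows that $i$ conjugates every generator of $\varphi_\ast E$ into $E'$, hence all of $\varphi_\ast E$, and by the criterion recorded after Definition~\ref{defn:DescentType} we conclude $[\bar{Y}',E']=[\bar{Z},\varphi_\ast E]$. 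Thus $\varphi_\ast\lambda:=[\bar{Z},\varphi_\ast E]$ is the unique element of $\DType(X,\lien(G'))$ in relation $\varphi_\ast$ with $\lambda$, as claimed. The two steps I expect to carry the real content are the identity $\varphi(\psi_\alpha(g))={}^{s}\varphi(g)$ (which is where commutativity of $\bar{G}'$ is genuinely needed — without it $\varphi_\ast\alpha$ is not even well defined on $\bar{Z}$) and this uniqueness argument, where non-surjectivity of $\bar{Y}\to\bar{Z}$ forces the detour through $\bar{G}'$-semilinearity and the covering $\bar{Y}'=\iota(\bar{Y})\cdot G'(\bar{k})$.
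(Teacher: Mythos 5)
Your construction of $\varphi_\ast\alpha$ by descending $\alpha\times_{\bar{k}}s_{\#}$ through the contracted-product quotient, the key identity $\varphi(\psi_\alpha(g))={}^{s}\varphi(g)$ obtained from commutativity of $\bar{G}'$, and the analysis of the extension $1\to G'(\bar{k})\to\varphi_\ast E\to\Gamma_k\to 1$ (normalization of $\{\rho_{g'}\}$, identification of the kernel) all follow the paper's proof essentially step for step. The one place where you deviate is the final uniqueness step, and there your argument relies on a hypothesis that is not available: you pass from agreement of $i\circ\varphi_\ast\alpha\circ i^{-1}$ and $\varpi_\alpha$ on all $\bar{k}$-points of $\bar{Y}'$ to equality of morphisms ``since $\bar{Y}'$ is reduced''. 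In this lemma $X$ is only assumed to be a non-empty $k$-variety (reducedness is imposed only in later statements such as Proposition \ref{prop:DescentType}), so $\bar{X}$, and hence the smooth $\bar{X}$-scheme $\bar{Y}'$, need not be reduced, and the pointwise argument does not suffice as written.

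The repair is exactly the paper's argument, and it does not require the detour you describe: one never needs to ``cancel'' the non-surjective $\iota$, but rather the quotient map $\pi:\bar{Y}\times_{\bar{k}}\bar{G}'\to\bar{Z}$, equivalently the surjection $\gamma\circ(\iota\times_{\bar{k}}\id_{\bar{G}'})=i\circ\pi:\bar{Y}\times_{\bar{k}}\bar{G}'\to\bar{Y}'$ (where $\gamma$ is the $\bar{G}'$-action on $\bar{Y}'$), which you yourself already invoked as a faithfully flat epimorphism when proving well-definedness of $\varphi_\ast\alpha$. Concretely, writing any $\beta\in\varphi_\ast E$ as $\varphi_\ast\alpha\circ\rho_h$, one computes $i\circ\beta\circ\pi=\varpi_\alpha\circ\rho_h\circ i\circ\pi$ using $i\circ\pi=\gamma\circ(\iota\times_{\bar{k}}\id_{\bar{G}'})$, the relation $\iota\circ\alpha=\varpi_\alpha\circ\iota$, commutativity of $G'$, and the fact that $E'$ is bound by $\lien(G')$ (which gives $\gamma\circ(\varpi_\alpha\times_{\bar{k}}s_{\#})=\varpi_\alpha\circ\gamma$); cancelling the epimorphism $\pi$ yields $i\circ\beta\circ i^{-1}=\varpi_\alpha\circ\rho_h\in E'$ with no reducedness assumption. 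With this substitution your proof coincides with the paper's.
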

\begin{proof}
    Let $\psi_\alpha \in \SAut^{\gr}(\bar{G}/k)$ be as in the proof of Lemma \ref{lemm:PushforwardOfDescentType1}, that is, $\alpha \circ \rho_g \circ \alpha^{-1} = \rho_{\psi_\alpha(g)}$ for all $g \in \bar{G}(\bar{k})$. Using this identity and \eqref{eq:DescentType1}, we have, for all $g' \times \bar{G}'(\bar{k})$, that
        \begin{equation*}
            (\alpha \times_{\bar{k}} s_{\#}) \circ (\rho_{g}^{-1} \times_{\bar{k}} \rho_{\varphi(g)}) = (\rho_{\psi_\alpha(g)}^{-1} \times_{\bar{k}} \rho_{\tensor[^s]{\varphi(g)}{}})) \circ (\alpha \times_{\bar{k}} s_{\#}).
        \end{equation*}
    Since $\varphi$ is a morphism of $k$-kernels, there exists a continuous (set-theoretic) lifting $\phi: \Gamma_k \to \SAut^{\gr}(\bar{G}/k)$ such that $\varphi \circ \phi_t = t_{\#} \circ \varphi$ for all $t \in \Gamma_k$. Then $\psi_\alpha = \inn(g_0) \circ \phi_s$ for some $g_0 \in \bar{G}(\bar{k})$. It follows that $\tensor[^s]{\varphi(g)}{} = \varphi(\phi_s(g)) = \varphi(g_0^{-1} \psi_\alpha(g) g_0) = \varphi(\psi_\alpha(g))$ since $G'$ is commutative. Thus,
        \begin{equation*}
            \pi \circ (\alpha \times_{\bar{k}} s_{\#}) \circ (\rho_{g}^{-1} \times_{\bar{k}} \rho_{\varphi(g)}) = \pi \circ (\rho_{\psi_\alpha(g)}^{-1} \times_{\bar{k}} \rho_{\psi_\alpha(g)})) \circ (\alpha \times_{\bar{k}} s_{\#}) = \pi \circ (\alpha \times_{\bar{k}} s_{\#}).
        \end{equation*}
    From this, we conclude that $\alpha \times_{\bar{k}} s_{\#}$ induces an $s$-semilinear automorphism $\varphi_\ast \alpha \in \SAut(\bar{Z}/X)$ as stated. Uniqueness follows from surjectivity of $\pi$. In particular, the formation
        \begin{equation*}
            E \to \SAut(\bar{Z}/X), \quad \alpha \mapsto \varphi_\ast \alpha
        \end{equation*}
    is a continuous homomorphism. Furthermore, for each $g \in \bar{G}(\bar{k})$, the automorphism $\varphi_\ast \rho_g: \bar{Z} \to \bar{Z}$ coincides with the $\bar{X}$-isomorphism $\rho_{\varphi(g)}$ taking $z$ to $z \cdot \varphi(g)$. Let us show that the sequence
        \begin{equation*}
            1 \to G'(\bar{k}) \to \varphi_\ast E \to \Gamma_k \to 1
        \end{equation*}
    is exact. Clearly, only exactness at the term $\varphi_\ast E$ needs to be checked. First, we note that for any $g' \in G'(\bar{k})$, the diagram
        \begin{equation*}
		\xymatrix{
				\bar{Y} \times_{\bar{k}} \bar{G}' \ar[rr]^{\id_{\bar{Y}} \times_{\bar{k}} \rho_{g'}} \ar[d]^{\pi} && \bar{Y} \times_{\bar{k}} \bar{G}' \ar[d]^{\pi} \\
				\bar{Z} \ar[rr]^{\rho_{g'}} &&  \bar{Z}
			}
	\end{equation*}
    commutes. It follows that, for any $s \in \Gamma_k$ and any
    and any $s$-semilinear automorphism $\alpha \in E$,
        \begin{align*}
            \varphi_\ast \alpha \circ \rho_{g'} \circ \pi & = \varphi_\ast \alpha \circ \pi \circ (\id_{\bar{Y}} \times_{\bar{k}} \rho_{g'}) \\
            & = \pi \circ (\alpha \times_{\bar{k}} s_{\#}) \circ (\id_{\bar{Y}} \times_{\bar{k}} \rho_{g'}) \\
            & = \pi \circ (\id_{\bar{Y}} \times_{\bar{k}} \rho_{\tensor[^s]{g}{}'}) \circ (\alpha \times_{\bar{k}} s_{\#}), & \text{by \eqref{eq:DescentType1}}, \\
            & = \rho_{\tensor[^s]{g}{}'} \circ \pi \circ (\alpha \times_{\bar{k}} s_{\#}) \\
            & = \rho_{\tensor[^s]{g}{}'} \circ \varphi_\ast \alpha \circ \pi,
        \end{align*}
    hence $\varphi_\ast \alpha \circ \rho_{g'} = \rho_{\tensor[^s]{g}{}'} \circ \varphi_\ast \alpha$ because $\pi$ is surjective. Thus, $\varphi_\ast E$ normalizes $G'(\bar{k})$, that is, any element $\beta \in \varphi_\ast E$ can be written as $\beta = \varphi_\ast \alpha \circ \rho_{g'}$ for some $\alpha \in E$ and $g' \in G'(\bar{k})$. If the image of $\beta$ in $\Gamma_k$ is $1$, then $\alpha = \rho_g$ for some $g \in G(\bar{k})$, hence $\beta = \varphi_\ast \rho_g \circ \rho_{g'} = \rho_{g'\varphi(g)}$, which is what needed to be proved. It is now immediate from Construction \ref{cons:PushforwardOfDescentType} that the descent type $[\bar{Z},\varphi_\ast E]$ is in relation $\varphi_\ast$ with $\lambda$.

    Suppose that $[\bar{Y}',E'] \in \DType(X,\lien(G'))$ is a descent type in relation $\varphi_\ast$ with $\lambda$. By definition, there is an $\varphi$-equivariant $\bar{X}$-morphism $\iota: \bar{Y} \to \bar{Y}'$ and a continuous homomorphism $\varpi: E \to E'$ fitting in a commutative diagram
		\begin{equation*}
			\xymatrix{
				1 \ar[r] & \bar{G}(\bar{k}) \ar[r] \ar[d]^{\varphi} & E \ar[r] \ar[d]^{\varpi} & \Gamma_k \ar@{=}[d] \ar[r] & 1 \\
				1 \ar[r] & G'(\bar{k}) \ar[r] & E' \ar[r] & \Gamma_k \ar[r] & 1,
			}
		\end{equation*} 
    such that $\iota \circ \alpha = \varpi_{\alpha} \circ \iota$ for all $\alpha \in E$. Then $\iota$ induces a $\bar{G}'$-equivariant $\bar{X}$-isomorphism $i: \bar{Z} \xrightarrow{\cong} \bar{Y}'$. Denote by $\gamma: \bar{Y}' \times_{\bar{k}} \bar{G}' \to \bar{Y}'$ the action of $\bar{G}'$, then $i \circ \pi = \gamma \circ (\iota \times_{\bar{k}} \id_{\bar{G}'})$. As we have seen above, any element $\beta \in \varphi_\ast E$ can be written as $\beta = \varphi_\ast \alpha \circ \rho_{h} = \rho_{\tensor[^s]{h}{}} \circ \varphi_\ast \alpha$ for some $\alpha \in E$ and $h \in G'(\bar{k})$ (where $s \in \Gamma_k$ is the image of $\alpha$). Since the extension $E'$ is bound by $\lien(G')$, its conjugation action on $\bar{G'}$ is compatible with the natural Galois action, hence $\varpi_\alpha \circ \rho_h = \rho_{\tensor[^s]{h}{}} \circ \varpi_\alpha$ and $\gamma \circ (\varpi_\alpha \times_{\bar{k{}}} s_{\#}) = \varpi_\alpha \circ \gamma$. Thus
        \begin{align*}
            i \circ \beta \circ \pi & = (i \circ \rho_{\tensor[^s]{h}{}}) \circ (\varphi_\ast \alpha \circ \pi) \\
            & = \rho_{\tensor[^s]{h}{}} \circ i \circ \pi \circ (\alpha \times_{\bar{k}} s_{\#}), & \text{since $i$ is $\bar{G}'$-equivariant}, \\
            & = \rho_{\tensor[^s]{h}{}} \circ \gamma \circ (\iota \times_{\bar{k}} \id_{\bar{G}'}) \circ (\alpha \times_{\bar{k}} s_{\#}) \\
            & = \gamma \circ (\id_{\bar{Y}'} \times_{\bar{k}} \rho_{\tensor[^s]{h}{}}) \circ (\iota \times_{\bar{k}} \id_{\bar{G}'}) \circ (\alpha \times_{\bar{k}} s_{\#}), & \text{since } G' \text{ is commutative},\\
            & = \gamma \circ (\varpi_\alpha \times_{\bar{k{}}} s_{\#})\circ (\iota \times_{\bar{k}} \rho_h), & \text{by the equalities $\iota \circ \alpha = \varpi_\alpha \circ \iota$ and \eqref{eq:DescentType1}}, \\
            & = \omega_{\alpha} \circ \gamma \circ (\id_{\bar{Y}'} \times_{\bar{k}} \rho_h)  \circ (\iota \times_{\bar{k}} \id_{\bar{G}'}), \\
            & = \varpi_\alpha \circ \rho_h \circ \gamma \circ (\iota \times_{\bar{k}} \id_{\bar{G}'}), & \text{since } G' \text{ is commutative},\\
            & = \varpi_\alpha \circ \rho_h \circ i \circ \pi.
        \end{align*}
    Since $\pi$ is surjective, it follows that $i \circ \beta = \varpi_\alpha \circ \rho_h \circ i$, hence $i \circ \beta \circ i^{-1} = \varpi_\alpha \circ \rho_h \in E'$. This shows that $[\bar{Y}',E'] = [\bar{Z},\varphi_\ast E] = \varphi_\ast \lambda$.
\end{proof}

\begin{lemm} \label{lemm:DescentTypePullbackVsPushforward}
    Constructions \ref{cons:PullbackOfDescentType} and \ref{cons:PushforwardOfDescentType} commute. More precisely, let $f: X' \to X$ be a morphism of non-empty $k$-varieties, let $L = (\bar{G},\kappa)$ and $L' = (\bar{G}',\kappa')$ be $k$-kernels, and let $\varphi: L \to L'$ be a morphism. If a descent type $\lambda' = [\bar{Y}',E'] \in \DType(X,L')$ is in relation $\varphi_\ast$ with a descent type $\lambda = [\bar{Y},E] \in \DType(X,L)$, then $f^\ast \lambda' \in \DType(X',L')$ is in relation $\varphi_\ast$ with $f^\ast \lambda \in \DType(X',L)$. In particular, if either $\bar{G}'$ is commutative or $\varphi$ is surjective, then we have a commutative diagram
		\begin{equation*}
		\xymatrix{
			\DType(X,L) \ar[r]^{\varphi_\ast} \ar[d]^{f^\ast}& \DType(X,L') \ar[d]^{f^\ast}\\
			\DType(X',L) \ar[r]^{\varphi_\ast} & \DType(X',L').
		}
		\end{equation*}
\end{lemm}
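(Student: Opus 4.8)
The plan is to apply the pullback construction of Construction \ref{cons:PullbackOfDescentType} not only to the descent data but also to the morphism of descent data that witnesses the relation $\lambda'\ \varphi_\ast\ \lambda$, and to check that the result witnesses $f^\ast\lambda'\ \varphi_\ast\ f^\ast\lambda$. So fix such a witness: a morphism $(\iota,\varpi)\colon(\bar{Y},E)\to(\bar{Y}',E')$ compatible with $\varphi$, as in Definition \ref{defn:DescentType} \ref{defn:DescentType2}; thus $\iota\colon\bar{Y}\to\bar{Y}'$ is a $\varphi$-equivariant $\bar{X}$-morphism, $\varpi\colon E\to E'$ is a continuous homomorphism lying over $\id_{\Gamma_k}$ and inducing $\varphi$ on $\bar{G}(\bar{k})$, and $\iota\circ\alpha=\varpi_\alpha\circ\iota$ for every $\alpha\in E$. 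I put $f^\ast\iota:=\iota\times_{\bar{X}}\id_{\bar{X}'}\colon f^\ast\bar{Y}=\bar{Y}\times_{\bar{X}}\bar{X}'\to\bar{Y}'\times_{\bar{X}}\bar{X}'=f^\ast\bar{Y}'$; since $\bar{G}$ acts through the first factor, this is a $\varphi$-equivariant $\bar{X}'$-morphism. It remains to build a compatible continuous homomorphism $\varpi'\colon f^\ast E\to f^\ast E'$.

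First I would observe that the continuous homomorphism $f^\ast\colon\SAut(\bar{Y}/X)\to\SAut(f^\ast\bar{Y}/X')$ restricts to a \emph{bijection} $E\to f^\ast E$. Indeed, if $\alpha\in E$ is $s$-semilinear and $f^\ast\alpha=\id$, then, since $f^\ast\bar{Y}\to\bar{X}'$ is faithfully flat and $\bar{X}'$ is non-empty, one has $s_{\#}'=\id_{\bar{X}'}$, hence $s=1$ and $\alpha=\rho_g^{-1}$ for some $g\in\bar{G}(\bar{k})$; but $f^\ast\bar{Y}$ is a non-empty torsor under $\bar{G}$ over $\bar{X}'$, so $\rho_g^{-1}\times_{\bar{X}}\id_{\bar{X}'}=\id$ forces $g=1$. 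Consequently there is a unique homomorphism $\varpi'\colon f^\ast E\to f^\ast E'$ with $\varpi'\circ(f^\ast|_E)=(f^\ast|_{E'})\circ\varpi$, namely $\varpi'(f^\ast\alpha)=f^\ast(\varpi_\alpha)=\varpi_\alpha\times_{\bar{X}}s_{\#}'$ (here $\varpi_\alpha$ is again $s$-semilinear since $\varpi$ lies over $\id_{\Gamma_k}$). It lies over $\id_{\Gamma_k}$ and induces $\varphi$ on $\bar{G}(\bar{k})$ because $f^\ast$ and $\varpi$ do, using $\varpi(\rho_g^{-1})=\rho_{\varphi(g)}^{-1}$. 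For continuity, I would upgrade the bijection $f^\ast|_E\colon E\to f^\ast E$ to a topological-group isomorphism: both $E$ and $f^\ast E$ are extensions of $\Gamma_k$ by the discrete group $\bar{G}(\bar{k})$ with open continuous projection, and a continuous bijective homomorphism between two such extensions that is compatible with the projections and with the copies of $\bar{G}(\bar{k})$ is automatically open — pick an open $U\ni1$ in $E$ with $UU^{-1}\cap\bar{G}(\bar{k})=\{1\}$, so that the projection is injective, hence a homeomorphism onto its open image, on $U$, and transport this across $f^\ast|_E$ via $q'\circ f^\ast=q$. Then $\varpi'=(f^\ast|_{E'})\circ\varpi\circ(f^\ast|_E)^{-1}$ is continuous.

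It then remains to verify that $(f^\ast\iota,\varpi')$ is a morphism of descent data compatible with $\varphi$, and the commutativity of the two squares of Definition \ref{defn:DescentType} \ref{defn:DescentType2} is a direct base change. For $\alpha\in E$ that is $s$-semilinear, writing $\beta=f^\ast\alpha\in f^\ast E$, one has
\begin{align*}
f^\ast\iota\circ\beta &=(\iota\times_{\bar{X}}\id_{\bar{X}'})\circ(\alpha\times_{\bar{X}}s_{\#}')=(\iota\circ\alpha)\times_{\bar{X}}s_{\#}' \\
&=(\varpi_\alpha\circ\iota)\times_{\bar{X}}s_{\#}'=(\varpi_\alpha\times_{\bar{X}}s_{\#}')\circ f^\ast\iota=\varpi'_\beta\circ f^\ast\iota,
\end{align*}
which is the square required on the level of $\bar{Y}$; the square relating $\varpi'$ to $\varphi$ on $\bar{G}(\bar{k})$ (and the compatibility with the projections to $\Gamma_k$) is obtained by applying $f^\ast$ to the corresponding square for $(\iota,\varpi)$. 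Hence $f^\ast\lambda'=[f^\ast\bar{Y}',f^\ast E']$ is in relation $\varphi_\ast$ with $f^\ast\lambda=[f^\ast\bar{Y},f^\ast E]$, which is the first assertion.

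Finally, when $\bar{G}'$ is commutative or $\varphi$ is surjective, Lemma \ref{lemm:PushforwardOfDescentType2} or Lemma \ref{lemm:PushforwardOfDescentType1}, applied over both $X$ and $X'$, shows that $\varphi_\ast$ is a genuine map in both rows of the square; applying the first assertion to $\lambda$ and $\lambda':=\varphi_\ast\lambda$, together with the uniqueness of the descent type in relation $\varphi_\ast$ with $f^\ast\lambda$, yields $f^\ast(\varphi_\ast\lambda)=\varphi_\ast(f^\ast\lambda)$, i.e.\ the commutativity of the diagram. I expect the only real obstacle to be the topological point in the second paragraph, namely checking that $f^\ast$ restricts to a \emph{homeomorphism} $E\xrightarrow{\sim}f^\ast E$ and not merely to a continuous bijection; everything else is either a formal base-change computation or a direct appeal to the already-established Lemmas \ref{lemm:PushforwardOfDescentType1} and \ref{lemm:PushforwardOfDescentType2}.
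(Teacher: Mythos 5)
Your proposal is correct and follows essentially the same route as the paper: pull back the witnessing morphism $(\iota,\varpi)$ along $f$, define $\varpi'_{f^\ast\alpha}:=\varpi_\alpha\times_{\bar{X}}s_{\#}'$ via the identification $E\cong f^\ast E$, verify the same compatibility computation, and deduce the commutative square from the uniqueness statements in Lemmas \ref{lemm:PushforwardOfDescentType1} and \ref{lemm:PushforwardOfDescentType2}. The only difference is that you spell out the verification that $f^\ast\colon E\to f^\ast E$ is an isomorphism of topological groups, which the paper simply recalls from Construction \ref{cons:PullbackOfDescentType}.
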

\begin{proof}
    That $\lambda'$ is in relation $\varphi_\ast$ with $\lambda$ means there exist an $\varphi$-equivariant morphism $\iota: \bar{Y} \to \bar{Y}'$ and a continuous homomorphism $\varpi: E \to E'$ satisfying the conditions in Definition \ref{defn:DescentType} \ref{defn:DescentType2}. Recall that there are isomorphisms $f^\ast: E \xrightarrow{\cong} f^\ast E$ and $f^\ast: E' \xrightarrow{\cong} f^\ast E'$. We define the continuous homomorphism $\varpi': f^\ast E \to f^\ast E'$ by the formula 
		\begin{equation*}
			\forall \alpha \in E, \quad  \varpi'_{f^\ast \alpha}:= f^\ast \varpi_{\alpha} = \varpi_{\alpha} \times_{\bar{X}} s_{\#}'
		\end{equation*}
    (where $s \in \Gamma_k$ is the image of $\alpha$). Then $f^\ast \iota \circ f^\ast \alpha = (\iota \times_{\bar{X}} \id_{\bar{X}'}) \circ (\alpha \times_{\bar{X}} s_{\#}') = (\varpi_\alpha \times_{\bar{X}}  s_{\#}') \circ (\iota \times_{\bar{X}} \id_{\bar{X}'}) = \varpi'_{f^{\ast}\alpha} \circ f^\ast \iota$. This means the pair $(f^\ast \iota, \varpi'): (f^\ast \bar{Y}, f^\ast E) \to (f^\ast \bar{Y}', f^\ast E')$ is a morphism of descent data compatible with $\varphi$ in the sense of Definition \ref{defn:DescentType} \ref{defn:DescentType2}, {\em i.e.}, $f^\ast \lambda'$ is in relation $\varphi_\ast$ with $f^\ast \lambda$.
\end{proof}

\subsection{D\'evissage} \label{subsec:Devissage}

Let $X$ be a variety over a perfect field $k$. Given a torsor $Y \to X$ under an algebraic $k$-group $G$ and a normal Zariski closed subgroup $H \subseteq G$, we may put $Z:=Y/H$ to obtain a torsor $Y \to Z$ under $H$ and a torsor $Z \to X$ under $G/H$. A d\'evissage argument is something that allows one to deduce a property for the ``total'' torsor $Y \to X$ from the same property for the above ``intermediate'' torsors. A difficulty in deploying such an argument is that when we have a $(G/H)$-torsor $Z' \to X$ of the same type as $Z$ (by virtue of Proposition \ref{prop:DescentType} \ref{prop:DescentType3}, this means $Z'$ is isomorphic to the twist $Z^\tau$ of $Z$ by a cocycle $\tau \in \Z^1(k,G/H)$), we do not necessarily have a torsor $Y' \to Z'$ (under some $k$-form of $H$) such that the composite $Y' \to Z' \to X$ is a torsor of the same type as $Y$. In fact, this is equivalent to the existence of a Galois cocycle $\sigma \in \Z^1(k,G)$ lifting $\tau$ (and in the case where such a cocycle $\sigma$ exists, $Y'$ is isomorphic to the twisted $X$-torsor $Y^{\sigma}$).

When torsors are replaced by descent types, the formalism for a d\'evissage argument turns out to be somewhat simpler. We shall study this problem in this paragraph by establishing some sort of ``long exact sequence'' associated with an exact sequence $1 \to H \to G \to G/H \to 1$ for the non-abelian $\H^2$ set as well as for the set $\DType$.

Let $L = (\bar{G},\kappa)$ and $L' = (\bar{G}',\kappa')$ be $k$-kernels, $\varphi: L \to L'$ a surjective morphism, and $\bar{H}:=\Ker(\bar{G} \xrightarrow{\varphi} \bar{G}')$. Then, the relation $\varphi_\ast: \H^2(k,L) \multimap \H^2(k,L')$ is a map. Let $E$ (resp. $E'$) be a (topological) extension of $\Gamma_k$ by $\bar{G}(\bar{k})$ (resp. $\bar{G}'(\bar{k})$) such that $[E'] = \varphi_\ast[E]$ in $\H^2(k,L')$, that is, there is a continuous surjection $\phi: E \to E'$ fitting in a commutative diagram
    \begin{equation*}
		\xymatrix{
                1 \ar[r] & \bar{G}(\bar{k}) \ar[r] \ar@{->>}[d]^{\varphi} & E \ar@{->>}[d]^{\phi} \ar[r]^{q} & \Gamma_k \ar@{=}[d] \ar[r] & 1 \\
                1 \ar[r] & \bar{G}'(\bar{k}) \ar[r] & E' \ar[r]^{q'} & \Gamma_k \ar[r] & 1.	
		}
	\end{equation*}

\begin{lemm} \label{lemm:Devissage}
    With the above notations, suppose that $E'$ has a continuous homomorphic section $\varsigma$ (that is, the class $[E'] \in \H^2(k,L')$ is neutral). Then, we have a commutative diagram
	\begin{equation*}
		\xymatrix{
			1 \ar[r] & \bar{H}(\bar{k}) \ar[r] \ar@{_{(}->}[d] & E_\varsigma \ar[r] \ar@{_{(}->}[d] & \Gamma_k \ar[r] \ar@{=}[d] & 1 \\
			1 \ar[r] & \bar{G}(\bar{k}) \ar[r] & E \ar[r]^q & \Gamma_k \ar[r] & 1	
		}
	\end{equation*}
    with exact rows, where $E_\varsigma:=\phi^{-1}(\varsigma(\Gamma_k)) \subseteq E$. In particular, there exist a $k$-kernel $L_\varsigma = (\bar{H},\kappa_\varsigma)$ and a class $[E_\varsigma] \in \H^2(k,L_\varsigma)$ such that the inclusion $\bar{H} \hookrightarrow \bar{G}$ induces a morphism $i: L_\varsigma \to L$ of $k$-kernels, and $[E]$ is in relation $i_\ast: \H^2(k,L_\varsigma) \multimap \H^2(k,L)$ with $[E_\varsigma]$.
\end{lemm}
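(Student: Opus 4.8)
The plan is to realize $E_\varsigma$ concretely as a topological extension, to transport the kernel structure from $E$ to $\bar H$ by conjugation, and then to observe that the remaining compatibility assertions are tautologies coming from the inclusion $E_\varsigma\hookrightarrow E$.

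First I would set up the diagram. Since $q'\circ\phi=q$, the homomorphism $\phi$ restricts on $\bar G(\bar k)$ to $\varphi\colon\bar G(\bar k)\to\bar G'(\bar k)$, so $\Ker(\phi)=\bar H(\bar k)$; and because $\varsigma(\Gamma_k)\cap\bar G'(\bar k)=\{1\}$, one also gets $E_\varsigma\cap\bar G(\bar k)=\bar H(\bar k)$. As $\phi$ is surjective, $\phi(E_\varsigma)=\varsigma(\Gamma_k)$, so $q|_{E_\varsigma}=(q'|_{\varsigma(\Gamma_k)})\circ(\phi|_{E_\varsigma})$ is surjective with kernel $\bar H(\bar k)$; this gives the top exact row, the left square commuting by construction and the right one because both composites equal $q|_{E_\varsigma}$. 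The subgroup $\bar H(\bar k)$ is discrete in $E_\varsigma$, being already discrete in $E$. The one point requiring care is the openness of $q|_{E_\varsigma}$: after replacing $E'$ by the pushout extension $E/\bar H(\bar k)$ --- which represents the same class $\varphi_\ast[E]$, and still carries a continuous homomorphic section since neutrality depends only on the class --- the map $\phi$ becomes an open quotient map, hence $\phi|_{E_\varsigma}\colon E_\varsigma\to\varsigma(\Gamma_k)$ is open, and composing with the homeomorphism $q'|_{\varsigma(\Gamma_k)}\colon\varsigma(\Gamma_k)\xrightarrow{\sim}\Gamma_k$ shows $q|_{E_\varsigma}$ is open. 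Thus $1\to\bar H(\bar k)\to E_\varsigma\to\Gamma_k\to1$ is a topological extension.

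Next I would produce the $k$-kernel $L_\varsigma$. Because $E$ is bound by $L$, conjugation in $E$ by an element $x$ with $q(x)=s$ restricts on $\bar G(\bar k)$ to the $\bar k$-points of a unique element $\widetilde\psi_x\in\SAut^{\gr}(\bar G/k)$ lying over $s$, with class $\kappa(s)$ in $\SOut(\bar G/k)$. Since $\varphi\colon L\to L'$ is a morphism of $k$-kernels, some continuous lift of $\kappa$ preserves $\bar H=\Ker(\varphi)$; as inner automorphisms preserve the normal subgroup $\bar H$ too, every $\widetilde\psi_x$ preserves $\bar H$, and I may restrict it to $\widetilde\psi_x|_{\bar H}\in\SAut^{\gr}(\bar H/k)$. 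For $x\in E_\varsigma$ over $s$ I set $\kappa_\varsigma(s):=[\widetilde\psi_x|_{\bar H}]\in\SOut(\bar H/k)$; this is independent of $x$ (two choices differ by some $h\in\bar H(\bar k)$, and $\inn(h)|_{\bar H}\in\Inn(\bar H)$) and is a homomorphic section of $\SOut(\bar H/k)\to\Gamma_k$. Picking a continuous set-theoretic section $x\colon\Gamma_k\to E_\varsigma$ of $q|_{E_\varsigma}$ --- one exists because $\Gamma_k$ is profinite and $\bar H(\bar k)$ discrete --- and using continuity of the conjugation map $E\times\bar G(\bar k)\to\bar G(\bar k)$, one checks that $s\mapsto\widetilde\psi_{x(s)}|_{\bar H}$ is a continuous lift of $\kappa_\varsigma$ to $\SAut^{\gr}(\bar H/k)$. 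Hence $L_\varsigma:=(\bar H,\kappa_\varsigma)$ is a $k$-kernel, the extension $E_\varsigma$ is bound by $L_\varsigma$ by construction, and $[E_\varsigma]\in\H^2(k,L_\varsigma)$ is defined.

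Finally, keeping that section $x$, the families $\psi_s:=\widetilde\psi_{x(s)}$ and $\phi_s:=\psi_s|_{\bar H}$ are continuous lifts of $\kappa$ and $\kappa_\varsigma$ satisfying $\iota\circ\phi_s=\psi_s\circ\iota$ for the inclusion $\iota\colon\bar H\hookrightarrow\bar G$, so $\iota$ is a morphism of $k$-kernels $i\colon L_\varsigma\to L$; and the inclusion $E_\varsigma\hookrightarrow E$ is a continuous homomorphism fitting into the commutative diagram already built, which by the definition of the relation $i_\ast\colon\H^2(k,L_\varsigma)\multimap\H^2(k,L)$ exhibits $[E]$ as being in relation $i_\ast$ with $[E_\varsigma]$. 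I expect the only genuine obstacles to be topological: establishing openness of $q|_{E_\varsigma}$ (which is why I pass to the pushout, so that $\phi$ is a quotient map) and producing the continuous lift $\Gamma_k\to\SAut^{\gr}(\bar H/k)$ of $\kappa_\varsigma$ (which I get from a continuous set-theoretic section of $E_\varsigma\to\Gamma_k$). Everything else is a mechanical unwinding of the definitions of ``bound by $L$'', ``morphism of $k$-kernels'', and the relation $i_\ast$.
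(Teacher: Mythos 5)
Your first paragraph is, in substance, the paper's entire proof: the identification $\bar G(\bar k)\cap E_\varsigma=\bar H(\bar k)$ and the surjectivity of $q|_{E_\varsigma}$ (you deduce it from $\phi(E_\varsigma)=\varsigma(\Gamma_k)$; the paper corrects a lift $\alpha$ of $s$ by an element $g$ with $\varphi(g)=\phi(\alpha)^{-1}\varsigma_s$ --- the same computation). Your second and third paragraphs spell out the construction of $\kappa_\varsigma$, the binding of $E_\varsigma$ by $L_\varsigma$, the morphism $i$, and the relation $i_\ast$; the paper leaves all of this implicit, and your verification (conjugation preserves $\bar H$ because a lift of $\kappa$ compatible with $\varphi$ does and inner automorphisms do, well-definedness modulo $\Inn(\bar H)$, continuity of the lift via a continuous set-theoretic section) is correct.

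The genuine problem is precisely the step you single out as ``the one point requiring care'': openness of $q|_{E_\varsigma}$. Replacing $E'$ by the pushout $E/\bar H(\bar k)$ does not secure it, for two reasons. First, to claim that $E/\bar H(\bar k)$ ``represents the same class'' you need an equivalence of \emph{topological} extensions; the comparison map induced by $\phi$ is a priori only a continuous bijection, and its openness is exactly the kind of assertion you are trying to avoid (one could instead check that $E/\bar H(\bar k)$ with the quotient topology is a topological extension bound by $L'$ and invoke that $\varphi_\ast$ is single-valued for surjective $\varphi$, but you do not do this). Second, and decisively, after replacing $E'$ you must also replace $\varsigma$ by a new continuous section of the new extension, and then $\phi^{-1}(\varsigma(\Gamma_k))$ is in general a \emph{different} subgroup of $E$: you would prove openness for that subgroup, not for the $E_\varsigma$ of the statement, which is the one the lemma (and its use in Proposition \ref{prop:Devissage}, where $\varsigma$ comes from the torsor $Z$) is about. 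The same remark applies to your appeal to a continuous set-theoretic section of $E_\varsigma\to\Gamma_k$, which also presupposes openness. A direct argument does the job without touching $E'$: since $q$ is open with discrete kernel, it admits a continuous local section $\theta$ through any $x\in E_\varsigma$, say over a neighbourhood of $s_0=q(x)$ with $\theta(s_0)=x$; the map $s\mapsto \phi(\theta(s))\,\varsigma(s)^{-1}$ is continuous, takes values in the discrete subgroup $\bar G'(\bar k)\subseteq E'$, and equals $1$ at $s_0$, hence equals $1$ on a neighbourhood of $s_0$, so $\theta$ takes values in $E_\varsigma$ there; this yields continuous local sections of $q|_{E_\varsigma}$ and in particular its openness (and then the global continuous set-theoretic section, by the coset argument as in Proposition \ref{prop:H2Transgression}). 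Note, for comparison, that the paper's own proof does not discuss this topological point at all and treats it as routine.
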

\begin{proof}
    Let us show that the restriction $q|_{E_\varsigma}: E_\varsigma \to \Gamma_k$ is surjective. Indeed, let $s \in \Gamma_k$ and lift it to an element $\alpha \in E$. Since both $\phi(\alpha) \in E'$ and $\varsigma_s \in E'$ lift $s \in \Gamma_k$, there exists $g' \in \bar{G}'(\bar{k})$ such that $\phi(\alpha) g' = \varsigma_s$. Since $\varphi$ is surjective, $g' = \varphi(g)$ for some $g \in \bar{G}(\bar{k})$. Then $\phi(\alpha g) = \varsigma_s \in \varsigma(\Gamma_k)$, or $\alpha g \in E_\varsigma$, which is an element satisfying $q(\alpha g) = s$.
	
    It remains to check exactness at $E_\varsigma$, that is, to show that $\bar{H}(\bar{k}) = \bar{G}(\bar{k}) \cap E_\varsigma$ inside $E$. Indeed, let $g \in \bar{G}(\bar{k})$ such that $\varphi(g) \in \varsigma(\Gamma_k) \subseteq E$. That $\varphi(g) \in E$ is a lifting of $1 \in \Gamma_k$ forces $\varphi(g) = \varsigma_1 = 1$, or $g \in \bar{H}(\bar{k})$.
\end{proof}

Let $X$ be a quasi-projective, reduced, and geometrically connected variety over $k$. We recall from Lemma \ref{lemm:PushforwardOfDescentType1} that for each descent type $\lambda = [\bar{Y},E] \in \DType(X,L)$, the pushforward type $\varphi_\ast \lambda \in \DType(X,L')$ from Construction \ref{cons:PushforwardOfDescentType} is represented by a pair $(\bar{Z},E')$, where $\bar{Z} := \varphi_\ast \bar{Y} = \bar{Y}/\bar{H}$ and $E':=\varphi_\ast E$. Indeed, $[E'] = \varphi_\ast [E]$ in $\H^2(k,L')$.

\begin{prop} \label{prop:Devissage}
    With the above notations, suppose that $Z \to X$ is a torsor under a $k$-form $G'$ of $L'$, of descent type $\dtype(Z) = \varphi_\ast \lambda$, which defines a continuous homomorphic section $\varsigma$ of $E'$ ({\em cf.} Proposition \ref{prop:DescentType} \ref{prop:DescentType1}). Let $E_Z =: E_{\varsigma} \subseteq E$, $L_Z:=L_\varsigma = (\bar{H},\kappa_\varsigma)$, and $i: L_Z \to L$ be as in Lemma \ref{lemm:Devissage}.
    \begin{enumerate}
	\item \label{prop:Devissage1} We have $E_Z = E \cap \SAut(\bar{Y}/Z)$ inside $\SAut(\bar{Y}/X)$. In particular, $\lambda_Z:= [\bar{Y},E_Z]$ is a descent type on $Z$ bound by $L_Z$.
		
    \item \label{prop:Devissage2} For every torsor $Y \to Z$ under a $k$-form $H$ of $L_Z$, of descent type $\lambda_Z$, the composite $Y \to Z \to X$ is a torsor under a $k$-form $G$ of $L$, of descent type $\lambda$. In particular, the inclusion $\bar{H} \hookrightarrow \bar{G}$ is defined over $k$, and $G' = G/H$.
\end{enumerate}
\end{prop}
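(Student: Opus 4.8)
The plan is to deduce (i) from an essentially formal manipulation together with Lemma~\ref{lemm:Devissage}, and (ii) by recognising the section attached to $Y\to Z$ as a section of $E$ and then invoking Proposition~\ref{prop:DescentType}~(i) for $X$ directly.

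For (i), I would start from the two pieces of input. By Lemma~\ref{lemm:PushforwardOfDescentType1} the homomorphism $\phi=\varphi_\ast\colon E\to E'$ sends $\alpha$ to the unique automorphism $\varphi_\ast\alpha$ of $\bar Z=\bar Y/\bar H$ with $\pi\circ\alpha=(\varphi_\ast\alpha)\circ\pi$, where $\pi\colon\bar Y\to\bar Z$ is the faithfully flat (hence epimorphic) quotient map; and $\varsigma\colon\Gamma_k\to E'$ is the semilinear action $s\mapsto s_{\#}$ attached to the $k$-form $Z$ of $\bar Z$. Now fix $\alpha\in E$ with image $s\in\Gamma_k$. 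Then $\alpha\in E_Z=\phi^{-1}(\varsigma(\Gamma_k))$ iff $\varphi_\ast\alpha=s_{\#}$ on $\bar Z$; since $\pi$ is an epimorphism this is equivalent to $\pi\circ\alpha=s_{\#}\circ\pi$, i.e.\ to $\alpha\in\SAut(\bar Y/Z)$ with the same index $s$ over $Z$ as over $X$ (the structure morphism $\bar Y\to\bar X$ factoring through $\pi$ and a $k$-morphism $\bar Z\to\bar X$). This gives $E_Z=E\cap\SAut(\bar Y/Z)$. To see that $(\bar Y,E_Z)$ is a descent datum on $Z$ bound by $L_Z=L_\varsigma$, I would combine this with Lemma~\ref{lemm:Devissage}: it supplies the exact sequence $1\to\bar H(\bar k)\to E_Z\to\Gamma_k$ together with its bounding by $L_\varsigma$, while $\bar H(\bar k)\hookrightarrow E_Z$ is the map $h\mapsto\rho_h^{-1}$ (the action of $\bar H$ on the $\bar Z$-torsor $\bar Y$ being the restriction of the $\bar G$-action) and $E_Z\to\Gamma_k$ is the restriction of $\SAut(\bar Y/X)\to\Gamma_k$; that $\bar H(\bar k)=E_Z\cap\bar G(\bar k)$ is precisely the exactness in Lemma~\ref{lemm:Devissage}.

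For (ii), since a descent type is an isomorphism invariant I may assume that the $\bar k$-variety underlying $Y\to Z$ is the $\bar Y$ of $\lambda_Z$ and that, writing $\varsigma_Y\colon\Gamma_k\to\SAut(\bar Y/Z)$, $s\mapsto s_{\#}$, for the semilinear action of the $k$-form $Y$, one has $E_Y=E_Z$; thus $\varsigma_Y$ is a continuous homomorphic section of $E_Z\to\Gamma_k$. Composing with $E_Z\hookrightarrow E$, I regard $\varsigma_Y$ as a section of $E\to\Gamma_k$; as $X$ is quasi-projective, reduced and geometrically connected and $\bar Y$ is quasi-projective, Proposition~\ref{prop:DescentType}~(i) yields a torsor $Y''\to X$ under a $k$-form $G$ of $L$ with $\dtype(Y'')=\lambda$. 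The crucial point is that $Y''$ is the composite $Y\to Z\to X$: the $k$-form $Y''$ of $\bar Y$ has Galois action $\varsigma_Y$, which is exactly the one defining $Y$, and its structure morphism to $\bar X$ factors through $\pi$ and $\bar Z\to\bar X$ Galois-compatibly, so the underlying $X$-scheme of $Y''$ is $Y$ mapping to $X$ through $Z$, the $G$-action being the descent of the $\bar G$-action already carried by $\bar Y$ in $\lambda$. For the final assertions, conjugation by $\varsigma_Y(\Gamma_k)$ inside $E_Z$ preserves the normal subgroup $\bar H(\bar k)$, so the Galois action of $G$ stabilises $\bar H\subseteq\bar G$; hence $\bar H$ descends to a closed $k$-subgroup of $G$, which is $H$ since its Galois action is again conjugation by $\varsigma_Y$ in $E_Z=E_Y$. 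Finally $\phi(\varsigma_Y(s))=\varsigma(s)=s_{\#}$ identifies the Galois action of $G/H$ on $\bar G/\bar H=\bar G'$ with that of $G'$, and $Y/H=Z$ over $X$, whence $G'=G/H$.

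I expect the main obstacle to be not a single hard step but the careful identification in part (ii): verifying that the torsor $Y''$ produced abstractly by Proposition~\ref{prop:DescentType}~(i) coincides, as an $X$-scheme equipped with a $G$-action, with the given composite $Y\to Z\to X$, and that the closed subgroup obtained by descending $\bar H$ is exactly the $k$-form $H$ one started from (rather than an inner twist of it). All of this becomes routine bookkeeping once the identity $E_Z=E\cap\SAut(\bar Y/Z)$ of part (i) is in hand, since that is precisely what lets $\varsigma_Y$ be read simultaneously as a section over $Z$ and as a section of $E$.
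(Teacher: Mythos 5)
Your argument is essentially the paper's own proof: part (i) is the same manipulation (using that $\varphi_\ast\alpha$ is characterized by $\pi\circ\alpha=(\varphi_\ast\alpha)\circ\pi$ with $\pi$ epimorphic, together with $\SAut(\bar{Z}/Z)=\varsigma(\Gamma_k)$ and Lemma \ref{lemm:Devissage} for the descent-datum structure), and part (ii) likewise reads the section $\varsigma_Y$ attached to $Y\to Z$ as a section of $E$ and invokes Proposition \ref{prop:DescentType} \ref{prop:DescentType1}. The only difference is that you spell out the identifications (that the resulting $X$-torsor is the composite $Y\to Z\to X$, that $\bar{H}$ descends to the given $H$, and that $G'=G/H$) which the paper states without detail; this is correct bookkeeping, not a new route.
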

\begin{proof}
    We show \ref{prop:Devissage1}. For each $\alpha \in E_Z$, one has $\varphi_\ast \alpha = \varsigma_s$ for some $s \in \Gamma_k$. Hence, the diagram
		\begin{equation*}
			\xymatrix{
				\bar{Y} \ar[r]^{\alpha} \ar[d] & \bar{Y} \ar[d] \\
				\bar{Z} \ar[r]^{\varsigma_s} & \bar{Z}
			}
		\end{equation*}
    commutes. Since $\varsigma_s \in \SAut(\bar{Z}/Z)$, we have $\alpha \in \SAut(\bar{Y}/Z)$. Conversely, if $\alpha \in E \cap \SAut(\bar{Y}/Z)$, then $\varphi_\ast \alpha \in \SAut(\bar{Z}/Z) = \varsigma(\Gamma_k)$ by virtue of the exact sequence \eqref{eq:DescentTypeExactSequenceSAut} (take $\bar{Y} = \bar{Z}$ and $X = Z$). It follows that $\alpha \in E_\varsigma = E_Z$.
	
    Let us now show \ref{prop:Devissage2}. The torsor $Z \to X$ defines a homomorphic section of $E_Z$, {\em a fortiori} a homomorphic section of $E$. In view of Proposition \ref{prop:DescentType} \ref{prop:DescentType1}, this section yields a $k$-form $G$ of $L$, such that $\bar{H} \hookrightarrow \bar{G}$ descends into a $k$-morphism $H \subseteq G$, and $G' = G/H$. Furthermore, it gives rise to a torsor $Y \to X$ under $G$ such that $Y/H = Z$.
\end{proof}
\section{The abelian case} \label{sec:Abelian}

Let $X$ be a non-empty variety over a perfect field $k$. In this \S, we study the set $\DType(X,G):=\DType(X,\lien(G))$ of descent type on $X$ bound by $\lien(G)$, where $G$ is a smooth commutative algebraic group over $k$. Our aim is to prove Theorem \ref{customthm:Abelian}.

\subsection{External product} \label{subsec:ExternalProduct}

To define a group structure on abelian descent types in a conceptual way, we introduce the {\em external product} of descent types. First, we define this on the level of kernels. Let $L_1 = (\bar{G}_1,\kappa_1)$ and $L_2 = (\bar{G}_2,\kappa_2)$ be $k$-kernels. If $s \in \Gamma_k$ and $\phi_1 \in \SAut^{\gr}(\bar{G}_1/k)$, $\phi_2 \in \SAut^{\gr}(\bar{G}_2/k)$ are $s$-semilinear automorphisms, then $\phi_1 \times_{\bar{k}} \phi_2 \in \SAut^{\gr}((\bar{G}_1 \times_{\bar{k}} \bar{G}_2)/k)$. Thus, we have a natural inclusion
    \begin{equation*}
        \SAut^{\gr}(\bar{G}_1/k) \times_{\Gamma_k} \SAut^{\gr}(\bar{G}_2/k) \hookrightarrow \SAut^{\gr}((\bar{G}_1 \times_{\bar{k}} \bar{G}_2)/k),
    \end{equation*}
which induces a homomorphism
    \begin{equation*}
        \SOut(\bar{G}_1/k) \times_{\Gamma_k} \SOut(\bar{G}_2/k) \to \SOut((\bar{G}_1 \times_{\bar{k}} \bar{G}_2)/k).
    \end{equation*}
Denote by $\kappa_1 \boxtimes \kappa_2$ the composite of this homomorphism with
    \begin{equation*}
        \Gamma_k \xrightarrow{(\kappa_1,\kappa_2)} \SOut(\bar{G}_1/k) \times_{\Gamma_k} \SOut(\bar{G}_2/k).
    \end{equation*}
Then $L_1 \times L_2 := (\bar{G}_1 \times_{\bar{k}} \bar{G}_2, \kappa_1 \boxtimes \kappa_2)$ is a $k$-kernel. If $G_1$ and $G_2$ are respective $k$-forms of $L_1$ and $L_2$ (that is, $L_1 = \lien(G_1)$ and $L_2 = \lien(G_2)$), then $G_1 \times_k G_2$ is a $k$-form of $L_1 \times L_2$.

It is immediate to see that the operation ``$\times$'' on $k$-kernels is associative. Namely, if $L_3 = (\bar{G}_3, \kappa_3)$ is a third $k$-kernel, then the canonical isomorphism $(\bar{G}_1 \times_{\bar{k}} \bar{G}_2)  \times_{\bar{k}} \bar{G}_3 \cong \bar{G}_1 \times_{\bar{k}} (\bar{G}_2 \times_{\bar{k}} \bar{G}_3)$ of algebraic $\bar{k}$-groups induces an isomorphism $(L_1 \times L_2) \times L_3 \cong L_1 \times (L_2 \times L_3)$ of $k$-kernels. In fact, they are both isomorphic to the $k$-kernel $L_1 \times L_2 \times L_3 = (\bar{G}_1 \times_{\bar{k}} \bar{G}_2 \times_{\bar{k}} \bar{G}_3, \kappa_1 \boxtimes \kappa_2 \boxtimes \kappa_3)$, where $\kappa_1 \boxtimes \kappa_2 \boxtimes \kappa_3$ is the composite
    \begin{equation*}
        \Gamma_k \xrightarrow{(\kappa_1,\kappa_2,\kappa_3)} \SOut(\bar{G}_1/k) \times_{\Gamma_k} \SOut(\bar{G}_2/k) \times_{\Gamma_k} \SOut(\bar{G}_3/k) \to \SOut((\bar{G}_1 \times_{\bar{k}} \bar{G}_2 \times_{\bar{k}} \bar{G}_3)/k),
    \end{equation*}
the second arrow being induced by the natural inclusion
    \begin{align*}
        \SAut^{\gr}(\bar{G}_1/k) \times_{\Gamma_k} \SAut^{\gr}(\bar{G}_2/k) \times_{\Gamma_k} \SAut^{\gr}(\bar{G}_3/k) & \hookrightarrow \SAut^{\gr}((\bar{G}_1 \times_{\bar{k}} \bar{G}_2 \times_{\bar{k}} \bar{G}_3)/k), \\
        (\phi_1,\phi_2,\phi_3) & \mapsto \phi_1 \times_{\bar{k}} \phi_2 \times_{\bar{k}} \phi_3.
    \end{align*}
In what follows, we abusively identify $(L_1 \times L_2) \times L_3$ and $L_1 \times (L_2 \times L_3)$ to $L_1 \times L_2 \times L_3$. We also note that the neutral element for the operation ``$\times$'' is the {\em trivial} $k$-kernel $L_{\triv}:=\lien(\{1\})$. 

Let $X_1$ and $X_2$ be non-empty $k$-varieties. If $\bar{Y}_1$ and $\bar{Y}_2$ are respective non-empty $\bar{k}$-varieties equipped with morphisms $\bar{Y}_1 \to \bar{X}_1$ and $\bar{Y}_2 \to \bar{X}_2$, we have a natural inclusion
    \begin{equation} \label{eq:ExternalProduct}
        \SAut(\bar{Y}_1/X_1) \times_{\Gamma_k} \SAut(\bar{Y}_2/X_2) \hookrightarrow \SAut((\bar{Y}_1 \times_{\bar{k}} \bar{Y}_2)/ (X_1 \times_k X_2)), \quad (\alpha_1,\alpha_2) \mapsto \alpha_1 \times_{\bar{k}} \alpha_2.
    \end{equation}
Indeed, if $s \in \Gamma_k$ and if $\alpha_1 \in \Aut(\bar{Y}_1/X_1)$ and $\alpha_2 \in \Aut(\bar{Y}_2/X_2)$ are $s$-semilinear, then the automorphism $\alpha_1 \times_{\bar{k}} \alpha_2 \in \Aut((\bar{Y}_1 \times_{\bar{k}} \bar{Y}_2)/ (X_1 \times_k X_2))$ is $s$-semilinear. Now, let $\lambda_1 = [\bar{Y}_1,E_1]$ (resp. $\lambda_2 = [\bar{Y}_2,E_2]$) be a descent type on $X_1$ (resp. $X_2$) bound by a $k$-kernel $L_1 = (\bar{G}_1,\kappa_1)$ (resp. $L_2 = (\bar{G}_2,\kappa_2)$). This means in particular that $E_1$ (resp. $E_2$) is a topological extension of $\Gamma_k$ by $\bar{G}_1(\bar{k})$ (resp. $\bar{G}_2(\bar{k})$) bound by the $k$-kernel $L_1$ (resp. $L_2$). One checks with ease that the extension
    \begin{equation*}
        1 \to \bar{G}_1(\bar{k}) \times \bar{G}_2(\bar{k}) \to E_1 \times_{\Gamma_k} E_2 \to \Gamma_k \to 1
    \end{equation*}
is bound by the product kernel $L_1 \times L_2$. Let us denote by $E_1 \boxtimes E_2$ the image of $E_1 \times_{\Gamma_k} E_2$ by the inclusion \eqref{eq:ExternalProduct}. Then $[\bar{Y}_1 \times_{\bar{k}} \bar{Y}_2, E_1 \boxtimes E_2]$ is a descent type on $X_1 \times_k X_2$ bound by $L_1 \times L_2$.

\begin{defn} \label{defn:ExternalProduct}
    We call $\lambda_1 \times \lambda_2:=[\bar{Y}_1 \times_{\bar{k}} \bar{Y}_2, E_1 \boxtimes E_2] \in \DType(X_1 \times_k X_2, L_1 \times L_2)$ the {\em external product} of $\lambda_1$ and $\lambda_2$.
\end{defn}
\begin{lemm} \label{lemm:ExternalProduct}
    \begin{enumerate}
        \item \label{lemm:ExternalProduct1} For $i \in \{1,2,3\}$, let $X_i$ be a non-empty $k$-variety, $L_i = (\bar{G}_i,\kappa_i)$ a $k$-kernel, and $\lambda_i = [\bar{Y}_i,E_i] \in \DType(X_i,L_i)$. Under the identification $(L_1 \times L_2) \times L_3 = L_1 \times (L_2 \times L_3) = L_1 \times L_2 \times L_3$, we have $(\lambda_1 \times \lambda_2) \times \lambda_3 = \lambda_1 \times (\lambda_2 \times \lambda_3)$ in $\DType(X_1 \times_k X_2 \times_k X_3, L_1 \times L_2 \times L_3)$.

        \item \label{lemm:ExternalProduct2} The set $\DType(\Spec(k), L_{\triv})$ has a unique element $\lambda_{\triv}$. Furthermore, let $X$ be a non-empty $k$-variety, $L = (\bar{G},\kappa)$ a $k$-kernel, and $\lambda = [\bar{Y},E] \in \DType(X,L)$. Under the identifications $L_{\triv} \times L = L \times L_{\triv} = L$, one has $\lambda_{\triv} \times \lambda = \lambda \times \lambda_{\triv} = \lambda$.
    \end{enumerate}
\end{lemm}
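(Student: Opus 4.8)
Both assertions are formal: they follow by unwinding Definition~\ref{defn:ExternalProduct} together with the associativity (resp.\ unitality) of fibre products of $\bar{k}$-varieties, of groups over $\Gamma_k$, and of semilinear automorphisms.

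\emph{Part \ref{lemm:ExternalProduct1}.} The plan is to introduce the \emph{triple external product} and to show that each of the two iterated products equals it. As in the construction preceding Definition~\ref{defn:ExternalProduct}, for a triple $(\alpha_1,\alpha_2,\alpha_3)$ of $s$-semilinear automorphisms with a common image $s\in\Gamma_k$, the assignment $(\alpha_1,\alpha_2,\alpha_3)\mapsto\alpha_1\times_{\bar{k}}\alpha_2\times_{\bar{k}}\alpha_3$ gives an injective homomorphism
\[
\SAut(\bar{Y}_1/X_1)\times_{\Gamma_k}\SAut(\bar{Y}_2/X_2)\times_{\Gamma_k}\SAut(\bar{Y}_3/X_3)\hookrightarrow\SAut\bigl((\bar{Y}_1\times_{\bar{k}}\bar{Y}_2\times_{\bar{k}}\bar{Y}_3)/(X_1\times_k X_2\times_k X_3)\bigr),
\]
and $E_1\boxtimes E_2\boxtimes E_3$, defined as the image of $E_1\times_{\Gamma_k}E_2\times_{\Gamma_k}E_3$, yields a descent type $[\bar{Y}_1\times_{\bar{k}}\bar{Y}_2\times_{\bar{k}}\bar{Y}_3,\,E_1\boxtimes E_2\boxtimes E_3]$ on $X_1\times_k X_2\times_k X_3$ bound by $L_1\times L_2\times L_3$ (the check that this is a legitimate descent datum is the binary one applied twice). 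It then remains to identify $(\lambda_1\times\lambda_2)\times\lambda_3$ and $\lambda_1\times(\lambda_2\times\lambda_3)$ with this triple product, and I would use two standard compatibilities. First, the canonical associativity isomorphism $(\bar{Y}_1\times_{\bar{k}}\bar{Y}_2)\times_{\bar{k}}\bar{Y}_3\xrightarrow{\ \sim\ }\bar{Y}_1\times_{\bar{k}}(\bar{Y}_2\times_{\bar{k}}\bar{Y}_3)$ is an $\bar{X}$-isomorphism which is $\bar{G}$-equivariant (for $\bar{G}=\bar{G}_1\times_{\bar{k}}\bar{G}_2\times_{\bar{k}}\bar{G}_3$, under the identification of kernels recalled above) and which intertwines $(\alpha_1\times_{\bar{k}}\alpha_2)\times_{\bar{k}}\alpha_3$ with $\alpha_1\times_{\bar{k}}(\alpha_2\times_{\bar{k}}\alpha_3)$. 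Second, the canonical identifications $(E_1\times_{\Gamma_k}E_2)\times_{\Gamma_k}E_3=E_1\times_{\Gamma_k}(E_2\times_{\Gamma_k}E_3)=E_1\times_{\Gamma_k}E_2\times_{\Gamma_k}E_3$, under which $E_i\boxtimes E_j$ is identified with $E_i\times_{\Gamma_k}E_j$. Chasing a general element $(\alpha_1,\alpha_2,\alpha_3)$ through the two iterated inclusions and applying the associativity isomorphism then shows that $(E_1\boxtimes E_2)\boxtimes E_3$ and $E_1\boxtimes(E_2\boxtimes E_3)$ both coincide with $E_1\boxtimes E_2\boxtimes E_3$ inside $\SAut\bigl((\bar{Y}_1\times_{\bar{k}}\bar{Y}_2\times_{\bar{k}}\bar{Y}_3)/(X_1\times_k X_2\times_k X_3)\bigr)$, which is the claim.

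\emph{Part \ref{lemm:ExternalProduct2}.} Here I would first observe that any torsor over $\Spec(\bar{k})$ under the trivial $\bar{k}$-group equals $\Spec(\bar{k})$, so a descent datum on $\Spec(k)$ bound by $L_{\triv}$ (Definition~\ref{defn:DescentType}~\ref{defn:DescentType1}) has underlying torsor $\bar{Y}=\Spec(\bar{k})$; since $\Aut(\Spec(\bar{k})/\Spec(\bar{k}))=\{1\}$ and the natural map $q\colon\SAut(\Spec(\bar{k})/\Spec(k))\to\Gamma_k$ is an isomorphism (with inverse $s\mapsto\Spec(s^{-1})$), the only subgroup $E$ fitting into $1\to\{1\}\to E\to\Gamma_k\to1$ is $E=\SAut(\Spec(\bar{k})/\Spec(k))$. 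Hence $\DType(\Spec(k),L_{\triv})$ is the singleton $\{\lambda_{\triv}\}$ with $\lambda_{\triv}=[\Spec(\bar{k}),\SAut(\Spec(\bar{k})/\Spec(k))]$. For the unit law I would use the base-change identifications $\Spec(k)\times_k X=X$ (so $(\Spec(k)\times_k X)_{\bar{k}}=\bar{X}$, and $L_{\triv}\times L=L$) together with $\Spec(\bar{k})\times_{\bar{k}}\bar{Y}=\bar{Y}$ as $\bar{X}$-schemes; a short check shows that for $\alpha\in E$ with image $s$ one has $\Spec(s^{-1})\times_{\bar{k}}\alpha=\alpha$ under this identification, whence $\SAut(\Spec(\bar{k})/\Spec(k))\boxtimes E=E$ and therefore $\lambda_{\triv}\times\lambda=[\bar{Y},E]=\lambda$. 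The equality $\lambda\times\lambda_{\triv}=\lambda$ follows symmetrically from $X\times_k\Spec(k)=X$.

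\emph{Where the work lies.} The argument is entirely formal, so the one real task is the bookkeeping: keeping track of the several canonical identifications at once — of $\bar{k}$-schemes, of $k$-kernels, of the groups $\SAut(-/-)$ and their subgroups, and of the torsor structures — and verifying that the associativity and unit isomorphisms of fibre products are compatible with all of them simultaneously. I anticipate no genuine obstruction here, only a somewhat lengthy but routine diagram chase.
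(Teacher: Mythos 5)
Your proposal is correct and follows essentially the same route as the paper: for (i) the paper conjugates $(E_1\boxtimes E_2)\boxtimes E_3$ by the canonical associativity isomorphism $\iota\colon(\bar{Y}_1\times_{\bar{k}}\bar{Y}_2)\times_{\bar{k}}\bar{Y}_3\to\bar{Y}_1\times_{\bar{k}}(\bar{Y}_2\times_{\bar{k}}\bar{Y}_3)$ and checks $\iota\circ((\alpha_1\times_{\bar{k}}\alpha_2)\times_{\bar{k}}\alpha_3)\circ\iota^{-1}=\alpha_1\times_{\bar{k}}(\alpha_2\times_{\bar{k}}\alpha_3)$, which is exactly your element chase (without the intermediate triple product, a harmless repackaging); for (ii) it likewise identifies $E_{\triv}=\{\Spec(s^{-1}):s\in\Gamma_k\}$ as the unique descent datum on $\Spec(k)$ and uses the projection $\bar{Y}\times_{\bar{k}}\bar{k}\to\bar{Y}$ to see that conjugation sends $\alpha\times_{\bar{k}}\Spec(s^{-1})$ to $\alpha$, as you do.
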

\begin{proof}
    \begin{enumerate}
        \item We consider the $(\bar{X}_1 \times_{\bar{k}} \bar{X}_2 \times_{\bar{k}} \bar{X}_3)$-isomorphism 
            \begin{equation*}
                \iota: (\bar{Y}_1 \times_{\bar{k}} \bar{Y}_2) \times_{\bar{k}} \bar{Y}_3 \xrightarrow{\cong} \bar{Y}_1 \times_{\bar{k}} (\bar{Y}_2 \times_{\bar{k}} \bar{Y}_3), \quad ((y_1,y_2),y_3) \mapsto (y_1,(y_2,y_3))
            \end{equation*}
        of torsors under $\bar{G}_1 \times_{\bar{k}} \bar{G}_2 \times_{\bar{k}} \bar{G}_3$. An element of $(E_1 \boxtimes E_2) \boxtimes E_3$ has the form $(\alpha_1 \times_{\bar{k}} \alpha_2) \times_{\bar{k}} \alpha_3$, where $(\alpha_1,\alpha_2,\alpha_3) \in E_1 \times_{\Gamma_k} E_2 \times_{\Gamma_k} E_3$. Then, we have
            \begin{equation*}
                \iota \circ ((\alpha_1 \times_{\bar{k}} \alpha_2) \times_{\bar{k}} \alpha_3) \circ \iota^{-1} = \alpha_1 \times_{\bar{k}} (\alpha_2 \times_{\bar{k}} \alpha_3) \in E_1 \boxtimes (E_2 \boxtimes E_3).
            \end{equation*}
        It follows from definition that $(\lambda_1 \times \lambda_2) \times \lambda_3 = \lambda_1 \times (\lambda_2 \times \lambda_3)$.

        \item Since $\Aut(\Spec(\bar{k})/\Spec(k)) = \{\Spec(s^{-1}): s \in \Gamma_k\} := E_{\triv}$, it is immediate that the only element of $\DType(\Spec(k),L_{\triv})$ is $\lambda_{\triv} = (\Spec(\bar{k}),E_{\triv})$. For $\lambda = [\bar{Y},E] \in \DType(X,L)$, the first projection $\pi: \bar{Y} \times_{\bar{k}} \bar{k} \to \bar{Y}$ is an $\bar{X}$-isomorphism of torsors under $\bar{G}$. Furthermore, for any $s \in \Gamma_k$ and any $s$-semilinear automorphism $\alpha \in E$, one has $\pi \circ (\alpha \times_{\bar{k}} \Spec(s^{-1})) \circ \pi^{-1} = \alpha \in E$. It follows that $\lambda \times \lambda_{\triv} = \lambda$. Similarly, $\lambda_{\triv} \times \lambda = \lambda$.
    \end{enumerate}
\end{proof}

Let us now show that the external product behaves in a functorial way.

\begin{prop} \label{prop:ExternalProduct}
    For $i \in \{1,2\}$, let $f_i: X_i' \to X_i$ be a morphism between non-empty $k$-varieties, $\varphi_i: L_i \to L_i'$ a morphism of $k$-kernels, and $\lambda_i = [\bar{Y}_i,E_i] \in \DType(X_i,L_i)$.
    \begin{enumerate}
        \item \label{prop:ExternalProduct1} We have $(f_1 \times_k f_2)^\ast(\lambda_1 \times \lambda_2) = f_1^\ast \lambda_1 \times f_2^\ast \lambda_2$ in $\DType(X_1' \times_k X_2',L_1 \times L_2)$.

        \item \label{prop:ExternalProduct2} The underlying morphism $\varphi_1 \times_{\bar{k}} \varphi_2$ of algebraic $\bar{k}$-groups is a morphism $L_1 \times L_2 \to L_1' \times L_2'$  of $k$-kernels. For $i \in \{1,2\}$, let $\lambda_i' = [\bar{Y}_i',E_i'] \in \DType(X_i,L_i')$ be a descent type which is in relation $\varphi_{i\ast}$ with $\lambda_i$. Then, $\lambda_1' \times \lambda_2'$ is in relation $(\varphi_1 \times_{\bar{k}} \varphi_2)_\ast$ with $\lambda_1 \times \lambda_2$. In particular, if either the morphisms $\varphi_1$ and $\varphi_2$ are surjective or the underlying $\bar{k}$-groups of $L_1$ and $L_2$ are commutative, then 
		\begin{equation*}
                (\varphi_1 \times_{\bar{k}} \varphi_2)_\ast(\lambda_1 \times \lambda_2) = \varphi_{1\ast} \lambda_1 \times \varphi_{2\ast} \lambda_2 \in \DType(X_1 \times_k X_2,L_1' \times L_2').
		\end{equation*}
  
        \item \label{prop:ExternalProduct3} If $\lambda_i = \dtype(Y_i)$ for some torsor $Y_i \to X_i$ under a $k$-form $G_i$ of $L_i$, $i=1,2$, then $\lambda_1 \times \lambda_2 = \dtype(Y_1 \times_k Y_2)$ in $\DType(L_1 \times L_2)$.
    \end{enumerate}
\end{prop}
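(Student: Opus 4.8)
The plan is to prove all three parts by unwinding the definitions of external product, pullback, pushforward, and $\dtype$; the only genuine content is keeping track of the canonical identifications of fibre products and of the compatibility of the groups $\SAut(-/-)$ with external products.

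\textbf{Part \ref{prop:ExternalProduct1}.} I would start from the canonical isomorphism
$(\bar{Y}_1 \times_{\bar{k}} \bar{Y}_2) \times_{\bar{X}_1 \times_{\bar{k}} \bar{X}_2}(\bar{X}_1' \times_{\bar{k}} \bar{X}_2') \cong (\bar{Y}_1 \times_{\bar{X}_1} \bar{X}_1') \times_{\bar{k}}(\bar{Y}_2 \times_{\bar{X}_2} \bar{X}_2')$,
which identifies the underlying torsor of $(f_1 \times_k f_2)^\ast(\lambda_1 \times \lambda_2)$ with that of $f_1^\ast\lambda_1 \times f_2^\ast\lambda_2$. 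It then remains to see that this isomorphism carries $(f_1 \times_k f_2)^\ast(E_1 \boxtimes E_2)$ onto $(f_1^\ast E_1)\boxtimes(f_2^\ast E_2)$. Since $(f_1\times_k f_2)^\ast$ and the $f_i^\ast$ are group isomorphisms onto their images (Construction \ref{cons:PullbackOfDescentType}), it suffices to check this on generators: for $\alpha_i \in E_i$ with common image $s \in \Gamma_k$, one has $(f_1 \times_k f_2)^\ast(\alpha_1 \times_{\bar{k}} \alpha_2) = (\alpha_1 \times_{\bar{k}} \alpha_2) \times_{\bar{X}_1 \times_{\bar{k}}\bar{X}_2}(s_{1,\#}' \times_{\bar{k}} s_{2,\#}')$ (the semilinear action on $\bar{X}_1' \times_{\bar{k}} \bar{X}_2'$ being $s_{1,\#}' \times_{\bar{k}} s_{2,\#}'$), which under the canonical identification is exactly $(f_1^\ast\alpha_1) \times_{\bar{k}} (f_2^\ast\alpha_2)$.

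\textbf{Part \ref{prop:ExternalProduct2}.} That $\varphi_1 \times_{\bar{k}} \varphi_2$ is a morphism of $k$-kernels follows by taking product liftings: if $\phi_i, \phi_i'$ are continuous set-theoretic liftings of $\kappa_i, \kappa_i'$ realizing $\varphi_i$ as a morphism of kernels, then $\phi_{1,s} \times_{\bar{k}} \phi_{2,s}$ and $\phi_{1,s}' \times_{\bar{k}} \phi_{2,s}'$ lift $\kappa_1 \boxtimes \kappa_2$ and $\kappa_1' \boxtimes \kappa_2'$ and intertwine $\varphi_1 \times_{\bar{k}} \varphi_2$. Given morphisms of descent data $(\iota_i,\varpi_i)\colon(\bar{Y}_i,E_i)\to(\bar{Y}_i',E_i')$ compatible with $\varphi_i$, I would set $\iota:=\iota_1 \times_{\bar{k}} \iota_2$, which is $(\varphi_1 \times_{\bar{k}} \varphi_2)$-equivariant, and $\varpi\colon E_1 \boxtimes E_2 \to E_1' \boxtimes E_2'$, $\alpha_1 \times_{\bar{k}} \alpha_2 \mapsto (\varpi_1)_{\alpha_1} \times_{\bar{k}} (\varpi_2)_{\alpha_2}$. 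This is well defined because $\bar{Y}_1,\bar{Y}_2$ are non-empty, so $\alpha_1,\alpha_2$ are recovered from $\alpha_1 \times_{\bar{k}} \alpha_2$; it is a continuous homomorphism over $\Gamma_k$ with kernel $\varphi_1 \times \varphi_2$ by construction; and the identity $\iota \circ (\alpha_1 \times_{\bar{k}} \alpha_2) = \varpi_{\alpha_1 \times_{\bar{k}} \alpha_2} \circ \iota$ is immediate from $\iota_i \circ \alpha_i = (\varpi_i)_{\alpha_i} \circ \iota_i$. Thus $(\iota,\varpi)$ witnesses that $\lambda_1' \times \lambda_2'$ is in relation $(\varphi_1 \times_{\bar{k}} \varphi_2)_\ast$ with $\lambda_1 \times \lambda_2$ in the sense of Definition \ref{defn:DescentType} \ref{defn:DescentType2}. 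For the final assertion, when the $\varphi_i$ are surjective (resp. the relevant $\bar{k}$-groups are commutative) so is $\varphi_1 \times_{\bar{k}} \varphi_2$ (resp. the relevant product group), hence $\varphi_{1\ast}$, $\varphi_{2\ast}$ and $(\varphi_1 \times_{\bar{k}} \varphi_2)_\ast$ are all honest maps by Lemma \ref{lemm:PushforwardOfDescentType1} (resp. Lemma \ref{lemm:PushforwardOfDescentType2}), and the equality $(\varphi_1 \times_{\bar{k}} \varphi_2)_\ast(\lambda_1 \times \lambda_2) = \varphi_{1\ast}\lambda_1 \times \varphi_{2\ast}\lambda_2$ drops out of the relation just established.

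\textbf{Part \ref{prop:ExternalProduct3}.} Here I would identify $\bar{Y}_1 \times_{\bar{k}} \bar{Y}_2$ with $(Y_1 \times_k Y_2)_{\bar{k}}$ and note that, base change commuting with products, the semilinear action $s \mapsto s_\#$ defined via this $k$-form is $s_\#^{(1)} \times_{\bar{k}} s_\#^{(2)}$, while $\rho_{(g_1,g_2)} = \rho_{g_1} \times_{\bar{k}} \rho_{g_2}$ for $(g_1,g_2) \in (G_1 \times_k G_2)(\bar{k}) = G_1(\bar{k}) \times G_2(\bar{k})$. A pair $(\alpha_1,\alpha_2) \in E_{Y_1} \times_{\Gamma_k} E_{Y_2}$ is of the form $\alpha_i = \rho_{g_i}^{-1} \circ s_\#^{(i)}$, so by functoriality of $\times_{\bar{k}}$ its image under \eqref{eq:ExternalProduct} is $\rho_{(g_1,g_2)}^{-1} \circ (s_\#^{(1)} \times_{\bar{k}} s_\#^{(2)})$; as $(g_1,g_2)$ and $s$ vary freely these are precisely the elements of $E_{Y_1 \times_k Y_2}$. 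Hence $E_{Y_1} \boxtimes E_{Y_2} = E_{Y_1 \times_k Y_2}$, and therefore $\lambda_1 \times \lambda_2 = [\bar{Y}_1 \times_{\bar{k}} \bar{Y}_2, E_{Y_1} \boxtimes E_{Y_2}] = \dtype(Y_1 \times_k Y_2)$ (in $\DType(X_1 \times_k X_2, L_1 \times L_2)$).

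All steps are of the ``unwind the definitions'' type; the only point requiring any care is the well-definedness and continuity of $\varpi$ in Part \ref{prop:ExternalProduct2}, together with the correct invocation of Lemmas \ref{lemm:PushforwardOfDescentType1}--\ref{lemm:PushforwardOfDescentType2} to upgrade the relations to maps — this is where the external product interacts non-trivially with the topology and the structure theory, rather than being pure diagram-chasing.
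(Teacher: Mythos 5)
Your proposal is correct and follows essentially the same route as the paper: part (i) via the canonical identification of the two fibre products and a check on elements of the form $(f_1\times_k f_2)^\ast(\alpha_1\times_{\bar{k}}\alpha_2)$, part (ii) via product liftings of the $\kappa_i$ and the product morphism of descent data $(\iota_1\times_{\bar{k}}\iota_2,\ \varpi_1\times\varpi_2)$, and part (iii) by identifying $E_{Y_1}\boxtimes E_{Y_2}$ with $E_{Y_1\times_k Y_2}$ directly. The extra remarks on well-definedness of $\varpi$ and on upgrading relations to maps via Lemmata \ref{lemm:PushforwardOfDescentType1} and \ref{lemm:PushforwardOfDescentType2} are consistent with, and slightly more explicit than, the paper's argument.
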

\begin{proof}
\begin{enumerate}
    \item Let $X:=X_1 \times_k X_2$, $X':=X_1' \times_k X_2'$, $\bar{Y}:=\bar{Y}_1 \times_{\bar{k}} \bar{Y}_2$, $\bar{Y}_1':=\bar{Y}_1 \times_{\bar{X}_1} \bar{X}'_1$, and $\bar{Y}_2':=\bar{Y}_2 \times_{\bar{X}_2} \bar{X}'_2$. By definition, we have $(f_1 \times_k f_2)^\ast (\lambda_1 \times \lambda_2) = [\bar{Y} \times_{\bar{X}} \bar{X}', (f_1 \times_k f_2)^\ast(E_1 \boxtimes E_2)]$ and $f_1^\ast \lambda_1 \times f_2^\ast \lambda_2 = [\bar{Y}'_1 \times_{\bar{k}} \bar{Y}'_2, f_1^\ast E_1 \boxtimes f_2^\ast E_2]$. Consider the isomorphism of $\bar{X}'$-torsors
	\begin{equation*}
		\iota: \bar{Y} \times_{\bar{X}} \bar{X}' \xrightarrow{\cong} \bar{Y}'_1 \times_{\bar{k}} \bar{Y}'_2, \quad ((y_1,y_2),(x_1',x_2')) \mapsto ((y_1,x_1'),(y_2,x_2')).
	\end{equation*}
    An element $\alpha \in (f_1 \times_k f_2)^\ast(E_1 \boxtimes E_2)$ has the form $\alpha = (f_1 \times_k f_2)^\ast(\alpha_1 \times_{\bar{k}} \alpha_2)$, where $\alpha_i \in E_i$ for $i \in \{1,2\}$, and they have the same image in $\Gamma_k$ (let us denote it by $s$). For all $y_1' \in \bar{Y}_1'$ and $y_2' \in \bar{Y}_2'$, write them as $y_1' = (y_1,x_1')$ and $y_2' = (y_2,x_2')$ (where $y_1 \in \bar{Y}_1$, $x_1' \in \bar{X}_1'$, $y_2 \in \bar{Y}_2$, and $x_2' \in \bar{X}_2'$). Then, we have
		\begin{align*}
			(\iota \circ \alpha \circ \iota^{-1})(y_1',y_2') & = (\iota \circ \alpha)((y_1,y_2),(x_1',x_2')) \\
			& = \iota((\alpha_1(y_1),\alpha_2(y_2)),(\tensor[^s]{x}{}{}_1',\tensor[^s]{x}{}{}_2')) \\
			& = ((\alpha_1(y_1), \tensor[^s]{x}{}{}_1'),(\alpha_2(y_2), \tensor[^s]{x}{}{}_2')) \\
			& = ((f_1^\ast \alpha_1)(y_1'),(f_2^\ast \alpha_2)(y_2')),
		\end{align*} 
    or $\iota \circ \alpha \circ \iota^{-1} = f_1^\ast \alpha_1 \times_{\bar{k}} f_2^\ast \alpha_2 \in f_1^\ast E_1 \boxtimes f_2^\ast E_2$. It follows that the isomorphism $\iota$ identifies the descent types $(f_1 \times_k f_2)^\ast (\lambda_1 \times \lambda_2)$ and $f_1^\ast \lambda_1 \times f_2^\ast \lambda_2$.

    \item  For $i \in \{1,2\}$, let $L_i = (\bar{G}_i,\kappa_i)$ and $L_i' = (\bar{G}'_i,\kappa_i')$. That $\varphi_i$ is a morphism of $k$-kernels means there are respective continuous set-theoretic liftings $\phi_i: \Gamma_k \to \SAut^{\gr}(\bar{G}_i/k)$ and $\phi_i': \Gamma_k \to \SAut^{\gr}(\bar{G}'_i/k)$ of $\kappa_i$ and $\kappa_i'$ such that $\varphi_i \circ \phi_i(s) = \phi'_i(s) \circ \varphi_i$ for all $s \in \Gamma_k$. Let $\phi$ and $\phi'$ denote the respective composites
        \begin{equation*}
            \Gamma_k \xrightarrow{(\phi_1,\phi_2)} \SAut^{\gr}(\bar{G}_1/k) \times_{\Gamma_k} \SAut^{\gr}(\bar{G}_2/k) \xrightarrow{(\psi_1,\psi_2) \mapsto \psi_1 \times_{\bar{k}} \psi_2} \SAut^{\gr}((\bar{G}_1 \times_{\bar{k}} \bar{G}_2)/k)
        \end{equation*}
    and
        \begin{equation*}
            \Gamma_k \xrightarrow{(\phi_1',\phi_2')} \SAut^{\gr}(\bar{G}_1'/k) \times_{\Gamma_k} \SAut^{\gr}(\bar{G}_2'/k) \xrightarrow{(\psi_1,\psi_2) \mapsto \psi_1 \times_{\bar{k}} \psi_2} \SAut^{\gr}((\bar{G}'_1 \times_{\bar{k}} \bar{G}'_2)/k)
        \end{equation*}
    Then $\phi$ and $\phi'$ are respective continuous set-theoretic liftings of $\kappa_1 \boxtimes \kappa_2$ and $\kappa_1' \boxtimes \kappa_2'$. Furthermore, $(\varphi_1 \times_{\bar{k}} \varphi_2) \circ \phi(s) = \phi'(s) \circ (\varphi_1 \times_{\bar{k}} \varphi_2)$ for all $s \in \Gamma_k$. Hence, $\varphi_1 \times_{\bar{k}} \varphi_2$ is indeed a morphism $L_1 \times L_2 \to L_1' \times L_2'$ of $k$-kernels.

    Now, since $\lambda_i'$ is in relation $\varphi_{i\ast}$ with $\lambda_i$, there exist an $\varphi_i$-equivariant $\bar{X}$-morphism $\iota_i: \bar{Y}_i \to \bar{Y}_i'$ and a continuous homomorphism $\varpi_i: E_i \to E_i'$ satisfying the conditions in Definition \ref{defn:DescentType} \ref{defn:DescentType2}. Let $\iota:=\iota_1 \times_{\bar{k}} \iota_2$, and let $\varpi: E_1 \boxtimes E_2 \to E_1' \boxtimes E_2'$ be the continuous homomorphism $\alpha_1 \times_{\bar{k}} \alpha_2 \mapsto \varpi_1(\alpha_1) \times_{\bar{k}} \varpi_2(\alpha_2)$. Then the pair $(\iota,\varpi)$ defines a morphism of descent data $(\bar{Y}_1 \times_{\bar{k}} \bar{Y}_2, E_1 \boxtimes E_2) \to (\bar{Y}'_1 \times_{\bar{k}} \bar{Y}'_2, E'_1 \boxtimes E'_2)$ compatible with $\varphi_1 \times_{\bar{k}} \varphi_2$, {\em i.e.}, $\lambda_1' \times \lambda_2'$ is in relation $(\varphi_1 \times_{\bar{k}} \varphi_2)_\ast$ with $\lambda_1 \times \lambda_2$.

    \item For $i \in \{1,2\}$, the $k$-form $G_i$ defines a continuous homomorphic lifting $\phi_i: \Gamma_k \to \SAut^{\gr}(\bar{G}_i/k)$ of $\kappa_i$, and we have $\dtype(Y_i) = [\bar{Y}_i,E_i]$, where $E_i := \{\phi_i(s) \circ \rho_{g_i}: s \in \Gamma_k, g_i \in G_i(\bar{k})\}$. The semilinear action $\phi$ of $\Gamma_k$ on $\bar{G}_1 \times_{\bar{k}} \bar{G}_2$ {\em via} its $k$-form $G_1 \times_k G_2$ is given by the composite
        \begin{equation*}
            \Gamma_k \xrightarrow{(\phi_1,\phi_2)} \SAut^{\gr}(\bar{G}_1/k) \times_{\Gamma_k} \SAut^{\gr}(\bar{G}_2/k) \xrightarrow{(\psi_1,\psi_2) \mapsto \psi_1 \times_{\bar{k}} \psi_2} \SAut^{\gr}((\bar{G}_1 \times \bar{G}_2)/k),
        \end{equation*}
    and we have $\dtype(Y_1 \times_k Y_2) = [\bar{Y}_1 \times_{\bar{k}} \bar{Y}_2, E]$, where $E := \{\phi(s) \circ \rho_{g}: s \in \Gamma_k, g \in G_1(\bar{k}) \times G_2(\bar{k})\}$. For all $s \in \Gamma_k$, $g_1 \in G_1(\bar{k})$, and $g_2 \in G_2(\bar{k})$, we have
        \begin{equation*}
            (\phi_1(s) \circ \rho_{g_1}) \times_{\bar{k}} (\phi_2(s) \circ \rho_{g_2}) = \phi_s \circ \rho_{(g_1,g_2)}.
        \end{equation*}
    It follows that $E_1 \boxtimes E_2 = E$, or $\dtype(Y_1) \times \dtype(Y_2) = \dtype(Y_1 \times_k Y_2)$.
\end{enumerate}
\end{proof}

\subsection{The group law} \label{subsec:GroupLaw}

Let $X$ be a non-empty variety over a perfect field $k$, and $G$ a smooth commutative algebraic $k$-group. By commutativity, the multiplication morphism $\nabla: G \times_k G \to G$ is a morphism of algebraic $k$-groups. Let $\Delta: X \to X \times_k X$ denote the diagonal morphism.

\begin{defn} \label{defn:SumOfDescentType}
    Let $\lambda$ and $\lambda'$ be descent types on $X$ bound by $\lien(G)$. We define their {\em sum} $\lambda + \lambda' \in \DType(X,G)$ to be the descent type $\Delta^\ast \nabla_\ast (\lambda \times \lambda') = \nabla_\ast \Delta^\ast  (\lambda \times \lambda')$ ({\em cf.} Definition \ref{defn:ExternalProduct} and Lemma \ref{lemm:DescentTypePullbackVsPushforward}).
\end{defn} 
\begin{prop} \label{prop:SumOfDescentType}
    The operation ``$+$'' from Definition \ref{defn:SumOfDescentType} makes $\DType(X,G)$ an abelian group, functorial in $X$ and $G$. More precisely, we have the following claims.
	\begin{enumerate}
		\item \label{prop:SumOfDescentType1} The neutral element is the descent type of the trivial torsor $G_X = G \times_k X \to X$.
		
	   \item \label{prop:SumOfDescentType2} The inverse of a descent type $\lambda$ is $-\lambda = i_\ast \lambda$, where $i: G \to G$ is the inversion morphism.
	\end{enumerate}
\end{prop}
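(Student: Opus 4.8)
The plan is to reduce each group axiom to the corresponding identity of morphisms of algebraic $k$-groups and then transport it through the three formal tools already established: functoriality of the external product (Proposition \ref{prop:ExternalProduct}), the commutation of pullback and pushforward (Lemma \ref{lemm:DescentTypePullbackVsPushforward}), and the associativity and unitality of the external product (Lemma \ref{lemm:ExternalProduct}). Throughout one uses that $\lien(G \times_k G) = \lien(G) \times \lien(G)$ and that all the structure maps of $G$ (the multiplication $\nabla$, the inversion $i$, the unit $e \colon \{1\} \hookrightarrow G$, and the projection $\epsilon \colon G \to \{1\}$) have commutative target or are surjective, so the pushforwards $\nabla_\ast, i_\ast, e_\ast, \epsilon_\ast$ are genuine maps by Lemmas \ref{lemm:PushforwardOfDescentType1} and \ref{lemm:PushforwardOfDescentType2}.

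\emph{Associativity, commutativity, functoriality.} For $\lambda_1,\lambda_2,\lambda_3 \in \DType(X,G)$, repeated application of Proposition \ref{prop:ExternalProduct}, Lemma \ref{lemm:DescentTypePullbackVsPushforward}, and Lemma \ref{lemm:ExternalProduct}~\ref{lemm:ExternalProduct1} gives that $(\lambda_1+\lambda_2)+\lambda_3$ equals $\bigl(\nabla\circ(\nabla\times_k\id_G)\bigr)_\ast\bigl((\Delta\times_k\id_X)\circ\Delta\bigr)^\ast(\lambda_1\times\lambda_2\times\lambda_3)$, while $\lambda_1+(\lambda_2+\lambda_3)$ equals $\bigl(\nabla\circ(\id_G\times_k\nabla)\bigr)_\ast\bigl((\id_X\times_k\Delta)\circ\Delta\bigr)^\ast$ of the same threefold external product; since $\nabla\circ(\nabla\times_k\id_G)=\nabla\circ(\id_G\times_k\nabla)$ is the threefold multiplication $G^3\to G$ and $(\Delta\times_k\id_X)\circ\Delta=(\id_X\times_k\Delta)\circ\Delta$ is the threefold diagonal $X\to X^3$, the two agree. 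Commutativity follows the same pattern: the canonical flip identifies $\lambda_1\times\lambda_2$ with $\lambda_2\times\lambda_1$ over the flip $\mathrm{sw}_X$ of $X\times_kX$ and the flip $\mathrm{sw}_G$ of $\lien(G)\times\lien(G)$, and $\mathrm{sw}_X\circ\Delta=\Delta$, $\nabla\circ\mathrm{sw}_G=\nabla$. Functoriality in $X$ and $G$ is likewise immediate from $\Delta_X\circ f=(f\times_kf)\circ\Delta_{X'}$ and $u\circ\nabla_G=\nabla_{G'}\circ(u\times_ku)$ for $f\colon X'\to X$ and a homomorphism $u\colon G\to G'$.

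\emph{Neutral element.} First I would note that $\DType(X,L_{\triv})$ is a single point: a torsor under $\{1\}$ over $\bar X$ is $\bar X$ itself, and the exact sequence \eqref{eq:DescentTypeExactSequenceSAut} forces $E=\SAut(\bar X/X)$. Call this point $\lambda_{\triv}^X$; by Construction \ref{cons:PullbackOfDescentType} it is $p^\ast\lambda_{\triv}$, and by Construction \ref{cons:PushforwardOfDescentType} one has $e_\ast\lambda_{\triv}^X=\dtype\bigl(e_\ast(X\xrightarrow{\id}X)\bigr)=\dtype(G\times_kX\to X)$, which is the element $0$ of \ref{prop:SumOfDescentType1}. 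Using Proposition \ref{prop:ExternalProduct}, Lemma \ref{lemm:ExternalProduct}~\ref{lemm:ExternalProduct2} and the identification $\id_X\times_kp=\pr_1$, one gets $\lambda\times 0=(\id_{\lien G}\times e)_\ast\,\pr_1^\ast\lambda$; pulling back along $\Delta$ (and using $\pr_1\circ\Delta=\id_X$, Lemma \ref{lemm:DescentTypePullbackVsPushforward}) and pushing forward along $\nabla$ (using $\nabla\circ(\id_G\times_ke)=\id_G$) yields $\lambda+0=\lambda$. With commutativity this shows $0$ is a two-sided neutral element.

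\emph{Inverse.} This is the step I expect to require the most care. By Proposition \ref{prop:ExternalProduct}~\ref{prop:ExternalProduct2} and Lemma \ref{lemm:DescentTypePullbackVsPushforward} one has $\lambda+i_\ast\lambda=\nabla_\ast\,(\id_{\lien G}\times i)_\ast\,\Delta^\ast(\lambda\times\lambda)$. Here one invokes the key identity $\Delta^\ast(\lambda\times\lambda)=(\Delta_G)_\ast\lambda$, where $\Delta_G\colon G\to G\times_kG$ is the diagonal homomorphism: on underlying torsors this is the classical isomorphism $\bar Y\times_{\bar X}\bar Y\cong\bar Y\times_{\bar k}^{\bar G}(\bar G\times_{\bar k}\bar G)$, and on the extension part one checks directly that the two descent data coincide. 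Granting this, the right-hand side rewrites as $\bigl(\nabla\circ(\id_G\times_ki)\circ\Delta_G\bigr)_\ast\lambda$; since $\nabla\circ(\id_G\times_ki)\circ\Delta_G=e\circ\epsilon$ factors through $\{1\}$, this equals $e_\ast\epsilon_\ast\lambda=e_\ast\lambda_{\triv}^X=0$ (here $\epsilon_\ast\lambda$ is the unique element of $\DType(X,L_{\triv})$ since $\epsilon$ is surjective). Hence $-\lambda=i_\ast\lambda$, completing the proof. The main obstacle throughout is the bookkeeping of the $E$-components: the underlying-torsor parts of all these identities are classical, but each step requires verifying the asserted commutative diagrams of semilinear automorphisms — in particular the identity $\Delta^\ast(\lambda\times\lambda)=(\Delta_G)_\ast\lambda$, which is the abelian incarnation of the fact that the external product makes $\DType(X,-)$ product-preserving in the group variable.
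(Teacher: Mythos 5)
Your proposal is correct, and for associativity, commutativity, the neutral element, and functoriality it is essentially the paper's proof: the same identities of structure morphisms are transported through Proposition \ref{prop:ExternalProduct}, Lemma \ref{lemm:DescentTypePullbackVsPushforward} and Lemma \ref{lemm:ExternalProduct}, and your flip argument for commutativity is the one in the text. The inverse step is where you genuinely diverge. The paper never uses your identity $\Delta^\ast(\lambda\times\lambda)=(\Delta_G)_\ast\lambda$: it instead proves $\pi_\ast\lambda=p^\ast\lambda_{\triv}$ directly (via the morphism of descent data $(f,\varpi)$ to $(\bar{X},\{s_{\#}\})$), replaces $\nabla_\ast(\id_G\times_k i)_\ast(\lambda\times\lambda)$ by $e_\ast(\pi\times_k\pi)_\ast(\lambda\times\lambda)$ by appealing to \eqref{eq:SumOfDescentType4}, and then contracts using $(p\times_k p)\circ\Delta=p$. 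Note that \eqref{eq:SumOfDescentType4} as printed only becomes a true identity after precomposing with the diagonal $\Delta_G$ of $G$ (over $\bar{X}\times_{\bar{k}}\bar{X}$ the two pushforwards of $\lambda\times\lambda$ have underlying torsor classes $\pr_1^\ast[\bar{Y}]-\pr_2^\ast[\bar{Y}]$ and $0$ respectively, and these agree only after $\Delta^\ast$), so your route --- performing the diagonal in the group variable first and then using the genuinely valid identity $\nabla\circ(\id_G\times_k i)\circ\Delta_G=e\circ\epsilon$, with $\epsilon\colon G\to\Spec(k)$ the structure morphism --- is the more airtight packaging of the same cancellation, at the price of exactly the lemma you flag. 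That lemma does need the $E$-component checked, which you only assert; the check is routine: under the canonical isomorphism $\bar{Y}\times_{\bar{X}}\bar{Y}\cong\bar{Y}\times_{\bar{k}}^{\bar{G}}(\bar{G}\times_{\bar{k}}\bar{G})$, the group $\Delta^\ast(E\boxtimes E)$ consists of the automorphisms $(y_1,y_2)\mapsto(\alpha(y_1),\alpha'(y_2))$ with $(\alpha,\alpha')\in E\times_{\Gamma_k}E$, and since two liftings in $E$ of the same $s\in\Gamma_k$ differ by some $\rho_g^{-1}$, this group is generated by the $(\Delta_G)_\ast\alpha$ together with the $\rho_{(g_1,g_2)}$, i.e.\ equals $(\Delta_G)_\ast E$ in the sense of Lemma \ref{lemm:PushforwardOfDescentType2}. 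With that verification written out, your argument for \ref{prop:SumOfDescentType2} is complete, and the rest of the proposal needs no change.
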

\begin{proof}
    First, we show associativity. Indeed, we have the standard properties
        \begin{equation} \label{eq:SumOfDescentType1}
            (\Delta \times_k \id_X) \circ \Delta = (\id_X \times_k \Delta) \circ \Delta \quad \text{and} \quad \nabla \circ (\nabla \times_k \id_G) = \nabla \circ (\id_G \times_k \nabla).
        \end{equation}
    Let $\lambda$, $\lambda'$, $\lambda'' \in \DType(X,G)$, then
        \begin{align*}
            (\lambda + \lambda') + \lambda'' & = \Delta^\ast \nabla_\ast (\Delta^\ast \nabla_\ast(\lambda \times \lambda') \times \lambda''), & \text{by Definition \ref{defn:SumOfDescentType}}, \\ 
            & = \Delta^\ast \nabla_\ast (\Delta \times_k \id_X)^\ast (\nabla \times_{k} \id_G)_\ast((\lambda \times \lambda') \times \lambda''), & \text{by Proposition \ref{prop:ExternalProduct}\ref{prop:ExternalProduct1} and \ref{prop:ExternalProduct2}},\\
            & = \Delta^\ast (\Delta \times_k \id_X)^\ast \nabla_\ast (\nabla \times_{k} \id_G)_\ast(\lambda \times (\lambda' \times \lambda'')), & \text{by Lemmata \ref{lemm:DescentTypePullbackVsPushforward} and \ref{lemm:ExternalProduct} \ref{lemm:ExternalProduct1}}, \\
            & = \Delta^\ast (\id_X \times_k \Delta)^\ast \nabla_\ast (\id_G \times_k \nabla)_\ast(\lambda \times (\lambda' \times \lambda'')), & \text{by \eqref{eq:SumOfDescentType1}}, \\
            & = \Delta^\ast \nabla_\ast (\id_X \times_k \Delta)^\ast (\id_G \times_k \nabla)_\ast(\lambda \times (\lambda' \times \lambda'')), & \text{by Lemma \ref{lemm:DescentTypePullbackVsPushforward}}, \\
            & = \Delta^\ast \nabla_\ast(\lambda \times \Delta^\ast \nabla_\ast(\lambda' \times \lambda'')), & \text{by Proposition \ref{prop:ExternalProduct}\ref{prop:ExternalProduct1} and \ref{prop:ExternalProduct2}},\\
            & = \lambda + (\lambda' + \lambda''), & \text{by Definition \ref{defn:SumOfDescentType}}.
        \end{align*}
    Next, we show commutativity. Let $\lambda = [\bar{Y},E]$ and $\lambda' = [\bar{Y}',E']$. Let $\iota: \bar{Y }\times \bar{Y}' \to \bar{Y}' \times \bar{Y}$ be the isomorphism $(y,y') \mapsto (y',y)$. An element of $E \boxtimes E'$ has the form $\alpha \times_{\bar{k}} \alpha'$, where $(\alpha,\alpha') \in E \times_{\Gamma_k} E'$. It is clear that $\iota \circ (\alpha \times_{\bar{k}} \alpha') \circ \iota^{-1} = \alpha' \times_{\bar{k}} \alpha \in E' \boxtimes E$, thus $\iota$ identifies $\lambda \times \lambda' = [\bar{Y} \times_{\bar{k}} \bar{Y}', E \boxtimes E']$ to $\lambda' \times \lambda = [\bar{Y}' \times_{\bar{k}} \bar{Y}, E' \boxtimes E]$. It follows that
        \begin{equation*}
            \lambda+\lambda' = \Delta^\ast \nabla_\ast(\lambda \times \lambda') = \Delta^\ast \nabla_\ast(\lambda' \times \lambda) = \lambda' + \lambda.
        \end{equation*}
    \begin{enumerate}
        \item We construct the neutral element for ``$+$''. Let $p: X \to \Spec(k)$ (resp. $e: \{1\} \to G$) denote the structure morphism (resp. the inclusion of the neutral element. We have the standard properties
            \begin{equation} \label{eq:SumOfDescentType2}
                (\id_X \times_k p) \circ \Delta = \id_X \quad \text{and} \quad \nabla \circ (\id_G \times_k e) = \id_G.
            \end{equation}
        Viewing $G$ as the trivial $k$-torsor under itself, we have $G_X = p^\ast G$ and $G = e_\ast \{1\}$. Hence
		\begin{equation} \label{eq:SumOfDescentType3}
			\dtype(G_X) = p^\ast \dtype(G) = p^\ast e_\ast \lambda_{\triv}.
		\end{equation}
        where $\lambda_{\triv} = \dtype(\{1\})$ is the unique element of $\DType(\Spec(k),\{1\})$ ({\em cf.} Lemma \ref{lemm:ExternalProduct} \ref{lemm:ExternalProduct2}). Now, for any $\lambda \in \DType(X,G)$, we have
		\begin{align*}
		      \lambda + \dtype(G_X) & = \Delta^\ast \nabla_\ast(\lambda \times G_X), & \text{by Definition \ref{defn:SumOfDescentType}},\\
			& = \Delta^\ast \nabla_\ast(\lambda \times p^\ast e_\ast \lambda_{\triv}), & \text{by \eqref{eq:SumOfDescentType3}}, \\
			& = \Delta^\ast \nabla_\ast (\id_X \times_k p)^\ast (\id_X \times_k e)_\ast (\lambda \times \lambda_{\triv}), & \text{by Proposition \ref{prop:ExternalProduct} \ref{prop:ExternalProduct1} and \ref{prop:ExternalProduct2}}, \\
			& = \Delta^\ast (\id_X \times_k p)^\ast \nabla_\ast (\id_X \times_k e)_\ast \lambda, & \text{by Lemmata \ref{lemm:DescentTypePullbackVsPushforward} and \ref{lemm:ExternalProduct} \ref{lemm:ExternalProduct2}}, \\
			& = \id_X^\ast (\id_G)_\ast \lambda, & \text{by \eqref{eq:SumOfDescentType2}},  \\
			& = \lambda,
		\end{align*}
        showing that $\dtype(G_X)$ is the neutral element for the operation ``$+$'' on $\DType(X,G)$.

        \item Finally, we construct the inverse (for ``$+$'') of a given element $\lambda = [\bar{Y},E] \in \DType(X,G)$. Since $G$ is commutative, the inversion $i: G \to G$ is a morphism of algebraic $k$-groups. Let $\pi: G \to \Spec(k)$ denote the structure morphism. Then, we have the standard properties
        \begin{equation} \label{eq:SumOfDescentType4}
            \nabla \circ (\id_X \times_k i) = e \circ (\pi \times_k \pi) \quad \text{and} \quad (p \times_k p) \circ \Delta = p.
        \end{equation}

        We observe that $p^\ast \lambda_{\triv} = [\bar{X},\{s_{\#}: s \in \Gamma_k\}]$ by Construction \ref{cons:PullbackOfDescentType}. The structure morphism $f: \bar{Y} \to \bar{X}$ can be seen as a $\pi$-equivariant $\bar{X}$-morphism. We have a continuous homomorphism 
            \begin{equation*}
                \varpi: E \to \{s_{\#}: s \in \Gamma_k\}, \quad \alpha \mapsto q(\alpha)_{\#},
            \end{equation*}
        where $q: E \to \Gamma_k$ is the restriction of the natural homomorphism $\SAut(\bar{Y}/X) \to \Gamma_k$. For any $\alpha \in E$, one has $f \circ \alpha = q(\alpha)_{\#} \circ f$. In other words, the pair $(f,\varpi): \lambda \to p^\ast \lambda_{\triv}$ is a morphism of descent data $(\bar{Y},E) \to (\bar{X},\{s_{\#}: s \in \Gamma_k\})$ compatible with $\pi$ in the sense of Definition \ref{defn:DescentType} \ref{defn:DescentType2}. It follows that 
            \begin{equation} \label{eq:SumOfDescentType5}
                \pi_\ast \lambda = p^\ast \lambda_{\triv}.
            \end{equation}
        Thus, we have
		\begin{align*}
			\lambda + i_\ast \lambda & = \Delta^\ast \nabla_\ast (\lambda \times i_\ast \lambda), & \text{by Definition \ref{defn:SumOfDescentType}}, \\
			& = \Delta^\ast \nabla_\ast (\id_X \times_k i)_\ast (\lambda \times \lambda), & \text{by Proposition \ref{prop:ExternalProduct} \ref{prop:ExternalProduct2}},\\
			& = \Delta^\ast e_\ast (\pi \times_k \pi)_\ast (\lambda \times \lambda), & \text{by \eqref{eq:SumOfDescentType4}}, \\
			& = \Delta^\ast e_\ast (\pi_\ast \lambda \times \pi_\ast \lambda), & \text{by Proposition \ref{prop:ExternalProduct} \ref{prop:ExternalProduct2}}, \\
			& = e_\ast \Delta^\ast (p^\ast \lambda_{\triv} \circ p^\ast \lambda_{\triv}), & \text{by Lemma \ref{lemm:DescentTypePullbackVsPushforward} and \eqref{eq:SumOfDescentType5}}, \\
			& = e_\ast \Delta^\ast (p \times_k p)^\ast \lambda_{\triv}, & \text{by Proposition \ref{prop:ExternalProduct} \ref{prop:ExternalProduct1}}, \\
			& = e_\ast p^\ast \lambda_{\triv} , & \text{by \eqref{eq:SumOfDescentType4}}, \\
			& = p^\ast e_\ast \lambda_{\triv}, & \text{by Lemma \ref{lemm:DescentTypePullbackVsPushforward}}, \\
			& = \dtype(G_X), & \text{by \eqref{eq:SumOfDescentType3}}, 
		\end{align*}	
        showing that $i_\ast \lambda$ is the inverse of $\lambda$ for the operation ``$+$''.
    \end{enumerate}
    Functoriality and $X$ and $G$ follows immediately from Lemma \ref{lemm:DescentTypePullbackVsPushforward} and Definition \ref{defn:SumOfDescentType}.
\end{proof}

Recall that a descent type $\lambda = [\bar{Y},E] \in \DType(X,G)$ yields a class $[\bar{Y}] \in \H^0(k,\H^1(\bar{X},\bar{G}))$ as well as a class $[E] \in \H^2(k,G)$ ({\em cf.} Remarks \ref{remk:DescentType} \ref{remk:DescentType1}).

\begin{lemm} \label{lemm:SumOfDescentType}
	The maps
		\begin{equation*}
			\dtype: \H^1(X,G) \to \DType(X,G),
		\end{equation*}
		\begin{equation*}
			\DType(X,G) \to \H^0(k,\H^1(\bar{X},\bar{G})), \quad [\bar{Y},E] \mapsto [\bar{Y}],
		\end{equation*}
		and
		\begin{equation*}
			\DType(X,G) \to \H^2(k,G), \quad [\bar{Y},E] \mapsto [E]
		\end{equation*}
	are group homomorphisms.
\end{lemm}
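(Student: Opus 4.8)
The plan is to reduce all three assertions to the functoriality statements of \S\ref{subsec:Functoriality} and \S\ref{subsec:ExternalProduct}, together with one uniform observation: each of the three target groups carries its group law in the shape $a + b = \Delta^\ast\nabla_\ast(a \times b)$ of Definition \ref{defn:SumOfDescentType}, where $\times$ denotes the relevant external product, $\nabla$ acts through the multiplication $G \times_k G \to G$, and $\Delta$ through the diagonal $X \to X \times_k X$ (over $\bar{k}$ for the middle group, and with no $X$-factor for $\H^2$). Indeed, the sum of two $X$-torsors $Y,Y'$ under the commutative group $G$ is the contracted product $\Delta^\ast\nabla_\ast(Y \times_k Y')$; the sum in $\H^1(\bar{X},\bar{G})$ is the same expression over $\bar{k}$, and restricts to the subgroup $\H^0(k,\H^1(\bar{X},\bar{G}))$ of Galois-fixed classes; and the Baer sum on $\H^2(k,G) = \H^2(k,\lien(G))$ is $\nabla_\ast$ applied to the external product of extensions, which under the canonical identification $\H^2(k,G\times_k G) = \H^2(k,G)^2$ is exactly $(a,b) \mapsto a + b$.

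For the first map $\dtype\colon \H^1(X,G) \to \DType(X,G)$, I would invoke that $\dtype$ commutes with external products (Proposition \ref{prop:ExternalProduct} \ref{prop:ExternalProduct3}), with pullbacks along $X$-morphisms (Construction \ref{cons:PullbackOfDescentType}), and with pushforwards along $k$-group homomorphisms such as $\nabla$ (Construction \ref{cons:PushforwardOfDescentType}; since $\nabla$ is surjective, $\nabla_\ast$ is a genuine map, not merely a relation). Writing $[Y]+[Y'] = [\Delta^\ast\nabla_\ast(Y\times_k Y')]$, these compatibilities yield $\dtype([Y]+[Y']) = \Delta^\ast\nabla_\ast\bigl(\dtype(Y)\times\dtype(Y')\bigr) = \dtype(Y)+\dtype(Y')$ by Definition \ref{defn:SumOfDescentType}. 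Since $[G_X] = 0$ in $\H^1(X,G)$ and $\dtype(G_X)$ is the neutral element of $\DType(X,G)$ by Proposition \ref{prop:SumOfDescentType} \ref{prop:SumOfDescentType1}, this makes $\dtype$ a homomorphism.

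For the two forgetful maps, the strategy is to check that $[\bar{Y},E]\mapsto[\bar{Y}]$ and $[\bar{Y},E]\mapsto[E]$ intertwine external product, pullback and pushforward with the corresponding operations on $\H^1(\bar{X},\bar{G})$- and $\H^2$-classes; compatibility with $+$ then follows formally from Definition \ref{defn:SumOfDescentType}. On underlying torsors: the underlying torsor of $\lambda_1 \times \lambda_2$ is $\bar{Y}_1 \times_{\bar{k}} \bar{Y}_2$ (Definition \ref{defn:ExternalProduct}), that of $f^\ast\lambda$ is $f^\ast\bar{Y}$ (Construction \ref{cons:PullbackOfDescentType}), and that of $\varphi_\ast\lambda$ with $\varphi$ into a commutative group is the contracted product $\bar{Y}\times_{\bar{k}}^{\bar{G}}\bar{G}'$ (Lemma \ref{lemm:PushforwardOfDescentType2}); composing, the underlying torsor of $\lambda+\lambda'$ is precisely $[\bar{Y}]+[\bar{Y}']$ computed in $\H^1(\bar{X},\bar{G})$, while that of the neutral element $\dtype(G_X)$ is the trivial torsor, of class $0$. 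On $\H^2$-classes: $[E]$ is unchanged by pullback (Construction \ref{cons:PullbackOfDescentType}) and sent to $\nabla_\ast[E]$ by pushforward (Construction \ref{cons:PushforwardOfDescentType}), while the $\H^2$-class of $\lambda_1\times\lambda_2$ is $[E_1\boxtimes E_2]$, which, because $E_1\boxtimes E_2\cong E_1\times_{\Gamma_k} E_2$ as topological extensions, corresponds to $([E_1],[E_2])$ under $\H^2(k,L_1\times L_2)=\H^2(k,L_1)\times\H^2(k,L_2)$ for commutative $L_1,L_2$; hence $[E_{\lambda+\lambda'}] = \nabla_\ast([E_\lambda],[E_{\lambda'}]) = [E_\lambda]+[E_{\lambda'}]$, and $[E_{G_X}]=0$ since $E_{G_X}$ is split.

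I expect the only real difficulty to be bookkeeping rather than ideas: one must verify that the external-product/diagonal/multiplication recipe of Definition \ref{defn:SumOfDescentType} literally reproduces the standard group laws on $\H^1(X,G)$, on $\H^0(k,\H^1(\bar{X},\bar{G}))$ (in particular that this subset is closed under the operation and that $\bar{\Delta}^\ast\bar{\nabla}_\ast$ preserves Galois-fixed classes), and on $\H^2(k,\lien(G))$ (identifying $\nabla_\ast$ of the external product of extensions with the Baer sum), and that the ``underlying torsor'' and ``underlying extension'' assignments are genuinely compatible with the three operations — for pushforward this is exactly where Lemma \ref{lemm:PushforwardOfDescentType2} and the isomorphism $E_1\boxtimes E_2\cong E_1\times_{\Gamma_k} E_2$ carry the load.
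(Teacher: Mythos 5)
Your proposal is correct and follows essentially the same route as the paper's proof: express the sum as $\Delta^\ast\nabla_\ast(\,\cdot\times\cdot\,)$, use the compatibility of $\dtype$ with external products, pullbacks and pushforwards for the first map, and for the other two read off the underlying torsor (giving the sum in $\H^1(\bar{X},\bar{G})$) and the underlying extension (giving the Baer sum via $E_1\boxtimes E_2\cong E_1\times_{\Gamma_k}E_2$ and $\nabla_\ast$). The only cosmetic difference is that you also track neutral elements, which the paper omits since additivity alone suffices.
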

\begin{proof}
    If $Y \to X$ and $Y' \to X$ are torsors under $G$, then $Y \times_k Y' \to X \times_k X$ is a torsor under $G \times_k G$, and $[Y] + [Y'] = [\Delta^\ast \nabla_\ast (Y \times_k Y')]$ in $\H^1(X,G)$. On the other hand, one has
		\begin{align*}
			\dtype(\Delta^\ast \nabla_\ast (Y \times_k Y')) & = \Delta^\ast \nabla_\ast \dtype (Y \times_k Y'), & \text{by Constructions \ref{cons:PullbackOfDescentType} and \ref{cons:PushforwardOfDescentType}}, \\
			& =  \Delta^\ast \nabla_\ast(\dtype(Y) \times \dtype(Y')), & \text{by Proposition \ref{prop:ExternalProduct} \ref{prop:ExternalProduct3}}, \\
			& = \dtype(Y) + \dtype(Y'), & \text{by Definition \ref{defn:SumOfDescentType}}.
		\end{align*}
	This shows that $\dtype: \H^1(X,G) \to \DType(X,G)$ is a group homomorphism.
	
	Let $\lambda = [\bar{Y},E]$ and $\lambda' = [\bar{Y}',E']$ be elements of $\DType(X,G)$. Then
		\begin{equation*}
			\lambda + \lambda' = [\Delta^\ast \nabla_\ast(\bar{Y} \times_{\bar{k}} \bar{Y}')], \Delta^\ast \nabla_\ast(E \boxtimes E')]
		\end{equation*} 
	by Constructions \ref{cons:PullbackOfDescentType} and \ref{cons:PushforwardOfDescentType}, Lemma \ref{lemm:PushforwardOfDescentType2}, and Definition \ref{defn:SumOfDescentType}. First, we have
		\begin{equation*}
			\Delta^\ast \nabla_\ast(\bar{Y} \times_{\bar{k}} \bar{Y}')] = [\bar{Y}] + [\bar{Y}']
		\end{equation*}
	 in $\H^1(\bar{X},\bar{G})$, which means the map
	 	\begin{equation*}
	 		\DType(X,G) \to \H^0(k,\H^1(\bar{X},\bar{G})), \quad [\bar{Y},E] \mapsto [\bar{Y}],
	 	\end{equation*}
	 is a group homomorphism. Second, by the description of the addition in $\H^2(k,G)$ in terms of group extensions (the ``Baer sum''), we have
	 	\begin{equation*}
	 	 [E] + [E'] = [\nabla_\ast(E \times_{\Gamma_k} E')] = [\nabla_\ast(E \boxtimes E')] = [\Delta^\ast \nabla_\ast(E \boxtimes E')].
	 	\end{equation*}
    This shows that the map
	 \begin{equation*}
	   \DType(X,G) \to \H^2(k,G), \quad [\bar{Y},E] \mapsto [E],
	 \end{equation*}
	 is also a group homomorphism.
\end{proof}

\subsection{The fundamental exact sequence} \label{subsec:Fundamental}

In this paragraph, we establish Theorem \ref{thm:Abelian}. Keep the notations $k$, $X$, and $G$ from the previous paragraph, and let $p: X \to \Spec(k)$ denote the structure morphism.

\begin{thm} [Theorem \ref{customthm:Abelian}] \label{thm:Abelian}
    We have a commutative diagram 
	\begin{equation} \label{eq:DescentAbelianExactSequence0}
		\xymatrix@C-1pc{
			& \H^1(k,G) \ar[r]^-{p^\ast} \ar[d] & \H^1(X,G) \ar[rr]^-{\dtype} \ar@{=}[d] && \DType(X,G) \ar[rr]^-{[\bar{Y},E] \mapsto [E]} \ar[d]^-{[\bar{Y},E] \mapsto [\bar{Y}]} && \H^2(k,G) \ar[r]^-{p^\ast} \ar[d] & \H^2(X,G) \ar@{=}[d] \\
			0 \ar[r] & \H^1(k,G(\bar{X})) \ar[r] & \H^1(X,G) \ar[rr] && \H^0(k,\H^1(\bar{X},\bar{G})) \ar[rr]^-{\trans} && \H^2(k,G(\bar{X})) \ar[r] & \H^2(X,G)
		}
	\end{equation}
    where the bottom row is the exact sequence issued from the Hochschild--Serre spectral sequence 
	\begin{equation*}
		\H^p(k,\H^q(\bar{X},\bar{G})) \Rightarrow \H^{p+q}(X,G)
	\end{equation*}
    (in particular, $\trans$ stands for the ``transgression'' map). The top row of \eqref{eq:DescentAbelianExactSequence0} is a complex, which is exact whenever $X$ is quasi-projective, reduced, and geometrically connected.
\end{thm}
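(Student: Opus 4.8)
The bottom row of \eqref{eq:DescentAbelianExactSequence0} is simply the exact sequence of terms of low degree attached to the first-quadrant spectral sequence $\H^p(k,\H^q(\bar X,\bar G)) \Rightarrow \H^{p+q}(X,G)$ (Hochschild--Serre for $p\colon X\to\Spec(k)$, legitimate since $G$ is smooth), so there is nothing to prove there. The two outer squares of \eqref{eq:DescentAbelianExactSequence0} commute by functoriality of this spectral sequence with respect to $p$: the map $p^\ast\colon\H^i(k,G)\to\H^i(X,G)$ factors through the edge homomorphism $\H^i(k,G(\bar X))\to\H^i(X,G)$ along the map $\H^i(k,G(\bar k))\to\H^i(k,G(\bar X))$ induced by $G(\bar k)=\H^0(\bar k,\bar G)\to\H^0(\bar X,\bar G)=G(\bar X)$. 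The second square commutes on the nose: for a torsor $Y\to X$ under $G$ one has $\dtype(Y)=[\bar Y,E_Y]\mapsto[\bar Y]=[Y_{\bar k}]$, which is exactly the image of $[Y]$ under the edge map $\H^1(X,G)\to\H^0(k,\H^1(\bar X,\bar G))$ (restriction to $\bar X$).

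The remaining square is the heart of the matter: given $\lambda=[\bar Y,E]\in\DType(X,G)$, I must show that the image of $[E]\in\H^2(k,G)$ under $\H^2(k,G(\bar k))\to\H^2(k,G(\bar X))$ equals $\trans([\bar Y])$. The plan is to represent both classes by one and the same $2$-cocycle. Choose a continuous set-theoretic section $\beta\colon\Gamma_k\to E$ of $q$; then $(s,t)\mapsto\beta_s\beta_t\beta_{st}^{-1}$ is a $2$-cocycle valued in $\bar G(\bar k)$ representing $[E]$. Now each $\beta_s\in\SAut(\bar Y/X)$ is an $s$-semilinear $\bar X$-automorphism of $\bar Y$ with $\beta_s\circ\rho_g\circ\beta_s^{-1}=\rho_{\tensor[^s]{g}{}}$ (this holds because $E$ is bound by $\lien(G)$ and, $G$ being commutative, the conjugation action of $E$ on $\bar G(\bar k)$ is the genuine Galois action); in other words, $(\beta_s)$ is precisely a family of semilinear $\bar G$-equivariant isomorphisms $\bar Y\xrightarrow{\sim}s_\#\bar Y$ of the kind that computes the transgression $d_2^{0,1}([\bar Y])$. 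Since $\beta$ takes values in the \emph{group} $E$, the resulting obstruction cocycle $\beta_s\beta_t\beta_{st}^{-1}$ already lands in $\bar G(\bar k)$, so it represents both $\trans([\bar Y])$ in $\H^2(k,G(\bar X))$ and the image of $[E]$ there, whence commutativity. The one genuinely technical input is the identification of the abstract differential $d_2^{0,1}$ of the spectral sequence with this ``$\psi_s\psi_t\psi_{st}^{-1}$'' description of the obstruction to descending a $\Gamma_k$-invariant class of $\H^1(\bar X,\bar G)$ to $\H^1(X,G)$; I would extract it from a cocycle-level comparison, passing if necessary to a finite extension $K/k$ over which $\bar Y$ is defined so that all sections and isomorphisms can be chosen continuously.

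Granting commutativity, the top row is always a complex. First, $\dtype\circ p^\ast=0$: one has $\dtype(p^\ast Z)=p^\ast\dtype(Z)$ by Construction \ref{cons:PullbackOfDescentType}, while over $\Spec(k)$ every $G$-torsor is a twist of the trivial one, so $\dtype(Z)$ is the neutral element of $\DType(\Spec(k),G)$ by Proposition \ref{prop:DescentType} \ref{prop:DescentType3} (applicable, $\Spec(k)$ being quasi-projective, reduced and geometrically connected) together with Proposition \ref{prop:SumOfDescentType} \ref{prop:SumOfDescentType1}; applying the homomorphism $p^\ast$ (Proposition \ref{prop:SumOfDescentType}) gives $\dtype(p^\ast Z)=0$. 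Next, $(\lambda\mapsto[E])\circ\dtype=0$, because $[E_Y]\in\H^2(k,G)$ is neutral, hence zero, for every torsor (observed just before Definition \ref{def:DescentTypeOfTorsor}). Finally $p^\ast\circ(\lambda\mapsto[E])=0$ follows from commutativity of the two rightmost squares and the fact that the bottom row is a complex at $\H^2(k,G(\bar X))$.

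It remains to prove exactness of the top row when $X$ is quasi-projective, reduced and geometrically connected. At $\H^1(X,G)$: if $\dtype(Y)=0=\dtype(G_X)$, then Proposition \ref{prop:DescentType} \ref{prop:DescentType3} provides $c\in\Z^1(k,G)$ with $Y\cong G_X^{c}$, so $[Y]=[G_X]-p^\ast[c]\in\Img(p^\ast)$. At $\DType(X,G)$: if $[E]$ is neutral, Proposition \ref{prop:DescentType} \ref{prop:DescentType1} yields a torsor $Y\to X$ under $G$ with $\dtype(Y)=\lambda$, so $\lambda\in\Img(\dtype)$. At $\H^2(k,G)$: let $\eta$ satisfy $p^\ast\eta=0$; by commutativity of the rightmost square the image of $\eta$ in $\H^2(k,G(\bar X))$ dies in $\H^2(X,G)$, so by exactness of the bottom row it equals $\trans(\beta)$ for some $\beta=[\bar Y]\in\H^0(k,\H^1(\bar X,\bar G))$. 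As $[\bar Y]$ is $\Gamma_k$-invariant, choose for each $s$ an $s$-semilinear $\bar G$-equivariant $\bar X$-automorphism $\psi_s$ of $\bar Y$; the cocycle $\psi_s\psi_t\psi_{st}^{-1}\in G(\bar X)$ represents $\trans([\bar Y])$, hence the image of $\eta$, so after correcting the $\psi_s$ by translations from $G(\bar X)$ one arranges that $\psi_s\psi_t\psi_{st}^{-1}\in\bar G(\bar k)$ and represents $\eta$. Then the subgroup $E\subseteq\SAut(\bar Y/X)$ generated by the $\psi_s$ and by $\bar G(\bar k)$ is a descent datum bound by $\lien(G)$ with $[E]=\eta$, so $\eta\in\Img(\lambda\mapsto[E])$. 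The main obstacle throughout is the commutativity of the middle square, i.e.\ the transgression computation of the second paragraph, together with the attendant continuity bookkeeping in this last construction; everything else is a formal consequence of Propositions \ref{prop:DescentType} and \ref{prop:SumOfDescentType} and Lemma \ref{lemm:SumOfDescentType}.
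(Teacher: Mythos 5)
Your overall architecture is the same as the paper's: identify the bottom row with the low-degree exact sequence of Hochschild--Serre, check the outer and second squares formally, reduce the third square to identifying the transgression of $[\bar Y]$ with the obstruction $2$-cocycle $\beta_s\beta_t\beta_{st}^{-1}$ attached to a continuous family of semilinear $\bar G$-equivariant liftings, prove the complex property from Propositions \ref{prop:DescentType} and \ref{prop:SumOfDescentType}, and get exactness at $\H^1(X,G)$, $\DType(X,G)$, $\H^2(k,G)$ from Proposition \ref{prop:DescentType} \ref{prop:DescentType3}, \ref{prop:DescentType1}, and the transgression description respectively. Your variant at the last spot (correcting the $\psi_s$ by elements of $G(\bar X)$ so that the cocycle lands in $G(\bar k)$ and then generating $E$) is an acceptable substitute for the paper's cleaner device of embedding the abstract extension $E$ representing $\eta$ into $\SAut_G(\bar Y/X)$ via a morphism of extensions.

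The genuine gap is the step you yourself flag as ``the one genuinely technical input'' and then defer: the identification of the Hochschild--Serre differential $d_2^{0,1}$ on $\H^0(k,\H^1(\bar X,\bar G))$ with the class of the extension $1\to G(\bar X)\to \SAut_G(\bar Y/X)\to\Gamma_k\to 1$, equivalently with the cocycle $\psi_s\psi_t\psi_{st}^{-1}$ for a continuous choice of semilinear $\bar G$-equivariant liftings $\psi_s$. This is not a formality one can wave at with ``a cocycle-level comparison'': the abstract transgression is defined via injective resolutions (or \v Cech machinery), not via semilinear automorphisms of a chosen torsor, and bridging the two is exactly the content of Proposition \ref{prop:H2Transgression} in the paper, whose proof goes through Giraud's gerbe of forms of $\bar Y$ (the equality $\trans([\bar Y])=[\Gcal_{\bar Y}]$ from \cite[Chapitre V, \S 3.1.6 and \S 3.2.1]{Giraud1971Cohomology}) and Breen's cocycle computation of gerbe classes \cite[\S 2]{Breen1994Classification}; note also that here the band is the possibly non-representable sheaf $p_\ast p^\ast G$, so one cannot simply quote the standard statement for group schemes. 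The same proposition also supplies two points your sketch uses tacitly: that $q\colon\SAut_G(\bar Y/X)\to\Gamma_k$ is surjective and \emph{open} with a \emph{continuous} set-theoretic section (obtained by passing to a finite extension $K$ over which $\bar Y$ descends), without which neither your representing cocycle for $\trans([\bar Y])$ nor the subgroup $E$ you build in the exactness-at-$\H^2(k,G)$ step is known to be a continuous/topological extension. Until this transgression identification is actually proved (or properly cited), the commutativity of the middle square — and with it the complex property at $\H^2(k,G)$ and the surjectivity argument onto $\Ker(p^\ast)$ — is unsupported; the rest of your proposal is correct and matches the paper.
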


We consider the four squares of \eqref{eq:DescentAbelianExactSequence0} from left to right. The first and the last squares obviously commute. The second square commutes by the very definition of the map $\dtype$. As for the third square, one requires a further investigation of the transgression map $\H^0(k,\H^1(\bar{X},\bar{G})) \to \H^2(k,G(\bar{X}))$. We recall that the action of $\Gamma_k$ on $G(\bar{X})$ is defined as follows. For any $\bar{k}$-morphism $\sigma: \bar{X} \to \bar{G}$ and any $s \in \Gamma_k$, the $\bar{k}$-morphism $\tensor[^s]{\sigma}{}: \bar{X} \to \bar{G}$ is given by the formula 
    \begin{equation} \label{eq:GaloisActionOnSections}
        \forall x \in \bar{X}, \quad (\tensor[^s]{\sigma}{})(x) := \tensor[^s]{\sigma}{}(\tensor[^{s^{-1}}]{x}{}).
    \end{equation}
The stabilizer of any $\sigma \in G(\bar{X})$ is open, since such a $\bar{k}$-morphism is defined over some finite extension of $k$. Again, let $f: \bar{Y} \to \bar{X}$ be a torsor under $\bar{G}$. Denote by $\SAut_G(\bar{Y}/X)$ the subgroup of $\SAut(\bar{Y}/X)$ consisting of semilinear $X$-automorphisms $\alpha$ such that $\alpha \circ \rho_g = \rho_{\tensor[^{q(\alpha)}]{g}{}} \circ \alpha$ for all $g \in G(\bar{k})$ (where $q: \SAut(\bar{Y}/X) \to \Gamma_k$ is the natural continuous and open map from \eqref{eq:DescentTypeExactSequenceSAut}). This is equivalent to saying that $\alpha$ induces a $\bar{G}$-equivariant $\bar{X}$-isomorphism $\phi_{q(\alpha)} \bar{Y} \to \bar{Y}$, where $\phi: \Gamma_k \to \SAut^{\gr}(\bar{G}/k)$ is the semilinear Galois action on $\bar{G}$ associated with its $k$-form $G$ ({\em cf.} Remarks \ref{remk:DescentType} \ref{remk:DescentType1}). We have $\Ker(q) \cap \SAut_G(\bar{Y}/X) = \Aut_{\bar{G}}(\bar{Y}/\bar{X})$, the subgroup of $\bar{G}$-equivariant $\bar{X}$-automorphism of $\bar{Y}$, which, by \cite[Chapitre III, \S 1.4.8 and \S 1.5.7]{Giraud1971Cohomology}, is $G(\bar{X})$. 
We obtain a sequence of topological groups
    \begin{equation} \label{eq:H2Transgression}
        1 \to G(\bar{X}) \xrightarrow{\sigma \mapsto \rho_\sigma^{-1}} \SAut_G(\bar{Y}/X) \xrightarrow{q} \Gamma_k \to 1,
    \end{equation}
which is exact except possibly at the last non-trivial term.

\begin{prop} \label{prop:H2Transgression}
    With the above notations, the following claims hold.
    \begin{enumerate}
    	\item \label{prop:H2Transgression1} The continuous homomorphism $q$ from \eqref{eq:H2Transgression} is open.
    	
        \item \label{prop:H2Transgression2} We have $[\bar{Y}] \in \H^0(k,\H^1(\bar{X},\bar{G}))$ if and only if \eqref{eq:H2Transgression} is exact, {\em i.e.}, if $q$ is surjective.
        
        \item \label{prop:H2Transgression3} When \eqref{eq:H2Transgression} is exact, the action of $\Gamma_k$ on $G(\bar{X})$ induced by conjugation in $\SAut_G(\bar{Y}/X)$ is precisely the one defined in \eqref{eq:GaloisActionOnSections}.

        \item \label{prop:H2Transgression4} Suppose that $[\bar{Y}] \in \H^0(k,\H^1(\bar{X},\bar{G}))$. Then the transgression map 
            \begin{equation*}
                \trans: \H^0(k,\H^1(\bar{X},\bar{G}))\to \H^2(k,G(\bar{X}))
            \end{equation*}
        (issued from the Hochschild--Serre spectral sequence $\H^p(k,\H^q(\bar{X},\bar{G})) \Rightarrow \H^{p+q}(X,G)$) takes $[\bar{Y}]$ to the class of the extension \eqref{eq:H2Transgression} ({\em cf.} \cite[Chapitre VIII, \S 8]{Giraud1971Cohomology}).
    \end{enumerate}
\end{prop}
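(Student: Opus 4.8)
I would prove the four assertions in order, with (i)--(iii) being direct adaptations of material already set up before the statement and (iv) requiring one to make the transgression map of the Hochschild--Serre spectral sequence explicit and compare it with the extension \eqref{eq:H2Transgression}.

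\emph{Part (i).} First note that $\SAut_G(\bar{Y}/X)$ really is a subgroup: the conditions $\alpha \circ \rho_g = \rho_{\tensor[^{q(\alpha)}]{g}{}} \circ \alpha$ ($g \in G(\bar{k})$) are stable under composition and inversion. It carries the subspace topology from the modified weak topology on $\SAut(\bar{Y}/X)$, so I would repeat the openness argument already given for $q \colon \SAut(\bar{Y}/X) \to \Gamma_k$. Given a basic open set $U \cap \SAut_G(\bar{Y}/X)$, with $U = \bigl(\bigcap_{i=1}^{n} \ev_{y_i}^{-1}(\Sigma_i)\bigr) \cap q^{-1}(s_0 \Gamma_K)$, and an element $\alpha$ of it, choose a finite extension $K'/K$ (Galois over $k$) over which $\bar{Y}$ descends to a torsor $Y' \to X_{K'}$ under $G_{K'}$ with $y_1,\dots,y_n \in Y'(K')$. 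For $t \in \Gamma_{K'}$ the semilinear automorphism $t_{\#}$ attached to this $K'$-form lies in $\SAut_G(\bar{Y}/X)$ --- because the Galois action it induces on $\bar{G}(\bar{k})$ is the restriction of the one defined by the $k$-form $G$ --- and it fixes each $y_i$; hence $\alpha \circ t_{\#} \in U \cap \SAut_G(\bar{Y}/X)$ and $q(\alpha)\Gamma_{K'} \subseteq q\bigl(U \cap \SAut_G(\bar{Y}/X)\bigr)$. As $\alpha$ was arbitrary, $q$ is open.

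\emph{Parts (ii) and (iii).} The key point, already recorded before the statement, is that an element $\alpha \in \SAut_G(\bar{Y}/X)$ with $q(\alpha) = s$ is the same datum as a $\bar{G}$-equivariant $\bar{X}$-isomorphism $\phi_s\bar{Y} \xrightarrow{\cong} \bar{Y}$, where $\phi \colon \Gamma_k \to \SAut^{\gr}(\bar{G}/k)$ is the semilinear action of the $k$-form $G$. Since inner automorphisms act trivially on $\H^1(\bar{X},\bar{G})$, the class $[\phi_s\bar{Y}]$ is the $\kappa$-action of $s$ on $[\bar{Y}]$, so $q$ is surjective if and only if $[\phi_s\bar{Y}] = [\bar{Y}]$ for all $s$, i.e.\ if and only if $[\bar{Y}] \in \H^0(k,\H^1(\bar{X},\bar{G}))$; combined with exactness at $G(\bar{X})$ (which is $\Aut_{\bar{G}}(\bar{Y}/\bar{X})$ by \cite[Chapitre III]{Giraud1971Cohomology}) and at $\SAut_G(\bar{Y}/X)$, this gives (ii). For (iii), assuming \eqref{eq:H2Transgression} exact, I would check on points that for $\alpha$ over $s$ and $\sigma \in G(\bar{X})$ one has $\alpha \circ \rho_\sigma \circ \alpha^{-1} = \rho_{\tensor[^s]{\sigma}{}}$: writing $x = f(y)$ and using $f \circ \alpha^{-1} = s_{\#}^{-1} \circ f$ together with $\alpha(y' \cdot g) = \alpha(y') \cdot \tensor[^s]{g}{}$, one finds $\alpha\bigl(\rho_\sigma(\alpha^{-1}(y))\bigr) = y \cdot \tensor[^s]{\bigl(\sigma(s_{\#}^{-1}(x))\bigr)}{} = y \cdot (\tensor[^s]{\sigma}{})(x)$, which is precisely formula \eqref{eq:GaloisActionOnSections}; the scheme-theoretic identity then follows by descending everything to a finite extension of $k$. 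Hence the conjugation action of $\Gamma_k$ on $G(\bar{X}) \cong \Aut_{\bar{G}}(\bar{Y}/\bar{X})$ coming from \eqref{eq:H2Transgression} is the one of \eqref{eq:GaloisActionOnSections}.

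\emph{Part (iv).} By hypothesis $[\bar{Y}]$ is $\Gamma_k$-invariant, so \eqref{eq:H2Transgression} is an exact sequence of topological groups, bound (by (iii)) by the $\Gamma_k$-module $G(\bar{X})$, and thus has a well-defined class in $\H^2(k,G(\bar{X}))$. To identify it with $\trans([\bar{Y}])$, I would choose a continuous set-theoretic section $\varsigma$ of $q$ in \eqref{eq:H2Transgression}, so that each $\varsigma_s$ is an isomorphism $\phi_s\bar{Y} \xrightarrow{\cong} \bar{Y}$; the class of \eqref{eq:H2Transgression} is then represented by the continuous $2$-cocycle $(s,t) \mapsto \varsigma_s \circ \varsigma_t \circ \varsigma_{st}^{-1} \in G(\bar{X})$, composition taken in $\SAut_G(\bar{Y}/X)$. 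On the other hand, unwinding the Hochschild--Serre (equivalently Leray) spectral sequence for the "covering" $\bar{X} \to X$ --- or invoking the general description of transgression on $\H^1$ in \cite[Chapitre VIII, \S 8]{Giraud1971Cohomology} --- one sees that $\trans([\bar{Y}])$ is the obstruction to endowing $\bar{Y}$ with a $\Gamma_k$-descent datum (which would make it the base change of an $X$-torsor), that such a descent datum is exactly a continuous homomorphic section of \eqref{eq:H2Transgression}, and that the corresponding obstruction cocycle built from the $\varsigma_s$ is the very one displayed above. Hence the two classes coincide in $\H^2(k,G(\bar{X}))$.

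\emph{Main obstacle.} The substance is in part (iv): one must match Giraud's (gerbe-theoretic) construction of the transgression with the concrete automorphism-group extension \eqref{eq:H2Transgression}, i.e.\ verify that the standard cocycle representing $d_2([\bar{Y}])$ is literally the one measuring the failure of $s \mapsto \varsigma_s$ to be multiplicative, with the module structure and the signs lining up. Once the dictionary "element of $\SAut_G(\bar{Y}/X)$ over $s$ $\leftrightarrow$ isomorphism $\phi_s\bar{Y} \cong \bar{Y}$" from (i)--(iii) is in place, this is essentially bookkeeping, but the conventions are where the care is needed.
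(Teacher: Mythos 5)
Your parts (i)--(iii) are essentially the paper's own proof: (i) is the openness argument of \S\ref{subsec:Type} augmented by the observation that, after descending $\bar{Y}$ to a torsor under $G_{K'}$ over $X_{K'}$, the automorphisms $t_{\#}$ ($t \in \Gamma_{K'}$) lie in $\SAut_G(\bar{Y}/X)$ and fix the chosen points; (ii) is the same dictionary between liftings of $s$ in $\SAut_G(\bar{Y}/X)$ and $\bar{G}$-equivariant $\bar{X}$-isomorphisms $\phi_s\bar{Y} \to \bar{Y}$; (iii) is the same pointwise computation giving $\alpha \circ \rho_\sigma \circ \alpha^{-1} = \rho_{{}^{s}\sigma}$.

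In (iv) your route (continuous set-theoretic section, factor-set cocycle, identification with the transgression) is also the paper's, but the two points you leave as assertions are exactly where the work is. First, the existence of a continuous set-theoretic section of \eqref{eq:H2Transgression} is not automatic and must be produced; the paper builds one from a homomorphic section over $\Gamma_K$ for a finite extension $K$ over which $\bar{Y}$ descends, extended by coset representatives. Without it the extension does not visibly define a class in the continuous cohomology group $\H^2(k,G(\bar{X}))$. Second, saying that $\trans([\bar{Y}])$ is ``the obstruction to endowing $\bar{Y}$ with a descent datum'' only pins down when the class vanishes, not its value, whereas the statement to be proved is an equality of classes; your appeal to ``unwinding the Hochschild--Serre spectral sequence'' is precisely the nontrivial step. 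The paper closes this by identifying $\trans([\bar{Y}])$ with the class of the gerbe $\Gcal_{\bar{Y}}$ of forms of $\bar{Y}$ (Giraud, Chapitre V, \S 3.1.6 and \S 3.2.1) and then invoking Breen's cocycle description of a gerbe class, which produces literally the factor set $\alpha_{st}\circ\alpha_t^{-1}\circ\alpha_s^{-1}$ of the extension \eqref{eq:H2Transgression}; some such input (or an explicit cochain-level computation of $d_2^{0,1}$, which is not mere bookkeeping since $\bar{X}\to X$ is a pro-\'etale limit rather than a finite cover) is needed to make your last clause a proof. Finally, note that your cocycle $\varsigma_s\varsigma_t\varsigma_{st}^{-1}$ is the inverse of the one the paper uses; this only shifts the class by a sign, harmless in view of Theorem \ref{thm:Comparison} commuting up to sign, but the convention should be fixed consistently with the cited description of the transgression.
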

\begin{proof}
    \begin{enumerate}
        \item It suffices to repeat the argument at the beginning of paragraph \ref{subsec:Type}, with the following addition: Suppose that $K/k$ is a finite Galois extension such that $\bar{Y}$ is the base change to $\bar{k}$ of a torsor $Y \to X_K$ under a $G_K$ (a ``$K$-form of $\bar{Y}$''). Then, for each $t \in \Gamma_K$, the action $t_{\#}: \bar{Y} \to \bar{Y}$ (induced by the $K$-form $Y$) is an element of $\SAut_{G_K}(\bar{Y}/X_K) \subseteq \SAut_G(\bar{Y}/X)$. Indeed, one has $\tensor[^t]{(y \cdot g)}{} = \tensor[^t]{y}{} \cdot \tensor[^t]{g}{}$ for all $y \in \bar{Y}$ and $g \in \bar{G}$.
    	  	
        \item That $[\bar{Y}] \in \H^0(k,\H^1(\bar{X},\bar{G}))$ means for every $s \in \Gamma_k$, there exists a $\bar{G}$-equivariant $\bar{X}$-isomorphism $\phi_s\bar{Y} \to \bar{Y}$ (where $\phi: \Gamma_k \to \SAut^{\gr}(\bar{G}/k)$ is the semilinear Galois action on $\bar{G}$ associated with its $k$-form $G$), that is, $s$ lifts to an element of $\SAut_G(\bar{Y}/X)$.

        \item Let $\alpha \in \SAut_G(\bar{Y}/X)$ and $s := q(\alpha) \in \Gamma_k$. By definition, we have
            \begin{equation*}
                \forall y \in \bar{Y}, \forall g \in \bar{G}, \quad \alpha(y \cdot g) = \alpha(y) \cdot \tensor[^s]{g}{}.
            \end{equation*}
        Let $\sigma \in G(\bar{X})$. For all $y \in \bar{Y}$, let $x := f(y) \in \bar{X}$. One has $f(\alpha^{-1}(y)) = \tensor[^{s^{-1}}]{x}{}$, hence
            \begin{equation*}
                \alpha(\rho_\sigma(\alpha^{-1}(y))) = \alpha(\alpha^{-1}(y) \cdot \sigma(\tensor[^{s^{-1}}]{x}{})) = \alpha(\alpha^{-1}(y)) \cdot \tensor[^s]{\sigma}{}(\tensor[^{s^{-1}}]{x}{}) = y \cdot (\tensor[^s]{\sigma}{})(x) = y \cdot (\tensor[^s]{\sigma}{})(f(y)).
            \end{equation*}
        It follows that $\alpha \circ \rho_\sigma \circ \alpha^{-1} = \rho_{\tensor[^s]{\sigma}{}}$, which is exactly what need to be proved.
        
        \item First, we note that the extension \eqref{eq:H2Transgression} has a continuous set-theoretic section. Indeed, let $K/k$ be a finite extension such that there exists a torsor $Y \to X_K$ under $G_K$ whose base change to $\bar{k}$ is isomorphic to $\bar{Y}$. The action of $\Gamma_K$ on $Y$ yields a continuous homomorphism $\varsigma: \Gamma_K \to \SAut_G(\bar{Y}/X)$ such that $q \circ \varsigma = \id_{\Gamma_K}$. For any $\gamma \in \Gal(K/k) = \Gamma_k/\Gamma_K$, choose an arbitrary lifting $s_\gamma \in \Gamma_k$, and lift it to an arbitrary element $\alpha_\gamma \in \SAut_G(\bar{Y}/X)$ (which is possible by virtue of \ref{prop:H2Transgression2}). Then, each element $s \in \Gamma_k$ can be uniquely written as $s = s_\gamma t$ for some $\gamma \in \Gal(K/k)$ and $t \in \Gamma_K$, and the map
        \begin{equation*}
        	\Gamma_K \to \SAut_G(\bar{Y}/X), \quad s_\gamma t \mapsto \alpha_\gamma \varsigma_t
        \end{equation*}
        is a continuous set-theoretic section of $q$.
        
         Recall the description of $\trans([\bar{Y}])$ in terms of $k$-{\em gerbes} (see {\em e.g.} \cite[pp. 10--11]{DLA2019Reduction}). Let $\Gcal_{\bar{Y}}$ be the ``$k$-gerbe of forms of $\bar{Y}$'', that is, for every finite extension $K/k$, the fibre category $\Gcal_{\bar{Y}}(K)$ is the groupoid of $K$-form of $\bar{Y} \to \bar{X}$. This is not necessarily an {\em algebraic} $k$-gerbe; it is bound by the sheaf $p_\ast p^\ast G$ (where $p: X \to \Spec(k)$ is the structure morphism), or equivalently, the $\Gamma_k$-module $G(\bar{X})$. Nevertheless, one can speak of its class $[\Gcal_{\bar{Y}}] \in \H^2(k,G(\bar{X}))$, which turns out to be exactly $\trans([\bar{Y}])$ (see \cite[Chapitre V, \S 3.1.6 and \S 3.2.1]{Giraud1971Cohomology}). To translate the description of $[\Gcal_{\bar{Y}}]$ into one in terms of cocycles, we refer to the general result in \cite[\S 2]{Breen1994Classification} (if $p_\ast p^\ast G$ is representable by a smooth algebraic group over $k$, {\em e.g.}, when $p_\ast p^\ast G = G$, this can be found in \cite[p. 13]{DLA2019Reduction}). The result is as follows. Take any continuous set-theoretic section $\alpha: \Gamma_k \to \SAut_G(\bar{Y}/X)$, and put $\sigma_{s,t}:=\alpha_{st} \circ \alpha_t^{-1} \circ \alpha_s^{-1} \in G(\bar{X})$ for all $s,t \in \Gamma_k$. Then, the continuous $2$-cocycle $\sigma \in \Z^2(k, G(\bar{X}))$ represents $[\Gcal_{\bar{Y}}]$. But, by definition, it also represents the class of the extension \eqref{eq:H2Transgression}.
    \end{enumerate}
\end{proof}

To give a descent type $\lambda = [\bar{Y},E] \in \DType(X,G)$ is to give a torsor $f: \bar{Y} \to \bar{X}$ under $\bar{G}$ such that $[\bar{Y}] \in \H^0(k,\H^1(\bar{X},\bar{G}))$ and a subgroup $E \subseteq \SAut_G(\bar{Y}/X)$ extension of $\Gamma_k$ of $G(\bar{k})$. In particular, by virtue of Proposition \ref{prop:H2Transgression} \ref{prop:H2Transgression4}, the natural map $\H^2(k,G) \to \H^2(k,G(\bar{X}))$ (induced by the inclusion $G(\bar{k}) \hookrightarrow G(\bar{X})$) takes $[E]$ to $\trans([\bar{Y}])$. This show that the third square in \eqref{eq:DescentAbelianExactSequence0} commutes. It remains to investigate its top row, in order to finish the proof of Theorem \ref{thm:Abelian}.

\begin{proof} [End of proof of Theorem \ref{thm:Abelian}]
    If $Z$ is any $k$-torsor under $G$, then Proposition \ref{prop:DescentType} \ref{prop:DescentType3} (applied to $X = \Spec(k)$) tells us that $\dtype(Z) = \dtype(G) = 0$ in $\DType(\Spec(k),G)$. By functoriality, we have $\dtype(p^\ast Z) = p^\ast \dtype(Z) = 0$ in $\DType(X,G)$, that is, the sequence
        \begin{equation*}
            \H^1(k,G) \xrightarrow{p^\ast} \H^1(X,G) \xrightarrow{\dtype} \DType(X,G)
        \end{equation*}
    is a complex. Next, if $Y \to X$ is a torsor under $G$, then $\dtype(Y) = [\bar{Y},E_Y]$ and $[E_Y] = 0$ in $\H^2(k,G)$ by definition of the map $\dtype$. Thus, the sequence
        \begin{equation*}
            \H^1(X,G) \xrightarrow{\dtype} \DType(X,G) \xrightarrow{[\bar{Y},E] \mapsto [E]} \H^2(k,G)
        \end{equation*}
    is a complex. Finally, the sequence 
        \begin{equation*}
            \DType(X,G) \xrightarrow{[\bar{Y},E] \mapsto [E]} \H^2(k,G) \xrightarrow{p^\ast} \H^2(X,G)
        \end{equation*}
    is a complex because the bottom row of \eqref{eq:DescentAbelianExactSequence0} is a complex.

    Now, we assume that $X$ is quasi-projective, reduced, and geometrically connected. We shall show that this implies the exactness of the top row of \eqref{eq:DescentAbelianExactSequence0}. Exactness at the term $\H^1(X,G)$ follows from Proposition \ref{prop:DescentType} \ref{prop:DescentType3}, while exactness at $\DType(X,G)$ is a consequence of Proposition \ref{prop:DescentType} \ref{prop:DescentType1}. Now, let $\eta \in \H^2(k,G)$ be a class represented by a topological extension $E$ of $\Gamma_k$ by $G(\bar{k})$, such that $p^\ast \eta = 0$ in $\H^2(X,G)$. Since the bottom row of \eqref{eq:DescentAbelianExactSequence0} is exact, the image of $\eta$ in $\H^2(k,G(\bar{X}))$ is $\trans([\bar{Y}])$ for some torsor $\bar{Y} \to \bar{X}$ under $\bar{G}$ such that $[\bar{Y}] \in \H^0(k,\H^1(\bar{X},\bar{G}))$. In view of Proposition \ref{prop:H2Transgression}, there is a continuous homomorphism $i: E \to \SAut_G(\bar{Y}/X)$ fitting in a commutative diagram
        \begin{equation*}
			\xymatrix{
				1 \ar[r] & \bar{G}(\bar{k}) \ar[r] \ar@{_{(}->}[d] & E \ar[r] \ar[d]^{i} & \Gamma_k \ar[r] \ar@{=}[d] & 1 \\
				1 \ar[r] & G(\bar{X}) \ar[r]^-{\sigma \mapsto \rho_{\sigma}^{-1}} & \SAut_G(\bar{Y}/X) \ar[r]^-{q} & \Gamma_k \ar[r]
			& 1}.
		\end{equation*} 
    In particular, $i$ is necessarily injective. Thus, we may regard $E$ as a subgroup of $\SAut_G(\bar{Y}/X)$, {\em i.e.}, the pair $(\bar{Y},E)$ defines a descent type $\lambda = [\bar{Y},E] \in \DType(X,G)$, whose image in $\H^2(k,G)$ is precisely $\eta$. This proves exactness of the top row of \eqref{eq:DescentAbelianExactSequence0} at the term $\H^2(k,G)$.

    Finally, it is immediate from our construction that \eqref{eq:DescentAbelianExactSequence0} is functorial in $X$ and $G$.
\end{proof}

We conclude this \S{} with the following arithmetic application, which shall be used in a subsequent work. Let $X$ be a smooth quasi-projective geometrically integral variety over a number field $k$. We recall that the Brauer--Grothendieck group of $X$ is $\Br(X) := \H^2(X,\Gbb_m)$. Let $\Br_1(X):=\Ker(\Br(X) \to \Br(\bar{X}))$ be its ``algebraic'' subgroup. Let $\Omega$ denote the set of places of $k$, and let
    \begin{equation*}
        \Be(X):=\Ker\left(\Br_1(X) \to \prod_{v \in \Omega}\frac{\Br_1(X_{k_v})}{\Img(\Br(k_v) \to \Br(X_{k_v}))}\right)
    \end{equation*}
be the subgroup of $\Br_1(X)$ consisting of ``everywhere locally constant'' elements. We recall that $X(k_\Omega) := \prod_{v \in \Omega}X(k_v)$. Let us define $X(k_\Omega)^{\Be(X)}$ to be the subset of $X(k_\Omega)$ consisting of families of local points which are orthogonal to $\Be(X)$ relative to the Brauer--Manin pairing (see {\em e.g.} \cite[(5.2)]{Skorobogatov2001Torsors}, noting that the elements of $\Be(X)$ are ``unramified'' thanks to \cite[Th\'eor\`eme 2.1.1]{Harari1994Fibration}).

\begin{prop} \label{prop:Connected}
    Keep the above notations and assumptions, and let $L = (\bar{G},\kappa)$ be a $k$-kernel, where $\bar{G}$ is a connected linear algebraic $\bar{k}$-group. If $X(k_\Omega)^{\Be(X)} \neq \varnothing$, then every descent type $\lambda \in \DType(X,L)$ comes from a torsor $Y \to X$ under a $k$-form $G$ of $L$.
\end{prop}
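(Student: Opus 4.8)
The plan is to apply the lifting criterion of Proposition \ref{prop:DescentType} \ref{prop:DescentType1}. Write $\lambda = [\bar{Y},E]$; note that $\bar{Y}$ is quasi-projective (being a torsor over the quasi-projective $\bar{X}$) and that $X$ is quasi-projective, reduced and geometrically connected, so the proposition applies. It tells us that a torsor $Y \to X$ under a $k$-form $G$ of $L$ with $\dtype(Y) = \lambda$ exists \emph{precisely} when the class $[E] \in \H^2(k,L)$ is neutral. Hence the whole proof reduces to showing that $[E]$ is neutral.

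The first step is to record that $[E]$ is \emph{neutral at every place $v$ of $k$}. Since $X(k_\Omega)^{\Be(X)}$ is a non-empty subset of $X(k_\Omega) = \prod_v X(k_v)$, we have $X(k_v) \ne \varnothing$ for all $v$, and $X_{k_v}$ is again quasi-projective, reduced and geometrically connected (as $X$ is smooth and geometrically integral). By Construction \ref{cons:PullbackOfDescentType}, $\lambda$ restricts to $\lambda_{k_v} = [\bar{Y}_{k_v},E_{k_v}] \in \DType(X_{k_v},L_{k_v})$, and $[E_{k_v}]$ is the image of $[E]$ under $\H^2(k,L) \to \H^2(k_v,L)$. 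Proposition \ref{prop:DescentType} \ref{prop:DescentType2}, applied to $X_{k_v}$ (which has a $k_v$-point), shows that $[E_{k_v}]$ is neutral; so $[E]$ is everywhere locally neutral.

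Next I would pass to abelianization. As $k$ has characteristic $0$, the connected linear $\bar{k}$-group $\bar{G}$ has characteristic subgroups $\bar{R}_u \subseteq \bar{G}'$, where $\bar{R}_u$ is the unipotent radical and $\bar{G}'$ is the preimage of the derived subgroup of $\bar{G}/\bar{R}_u$; then $\bar{T} := \bar{G}/\bar{G}'$ is a torus and $\bar{G}'/\bar{R}_u$ is semisimple. Since these subgroups are characteristic, $\kappa$ descends to a surjective morphism of $k$-kernels $\psi\colon L \twoheadrightarrow \lien(T)$, where $T$ is the unique $k$-torus with $\bar{T} = T_{\bar{k}}$; by Lemma \ref{lemm:PushforwardOfDescentType2} the pushforward $\psi_\ast\lambda \in \DType(X,T)$ is well defined. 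Invoking Borovoi's abelianization theory \cite{Borovoi1993H2}: the localization of $\mathrm{ab}^2([E]) \in \H^2_{\mathrm{ab}}(k,L)$ is neutral (hence zero) at every place, so $\mathrm{ab}^2([E]) \in \Sha^2_{\mathrm{ab}}(k,L)$; because the abelianized $\H^2$ of a semisimple kernel is built out of cohomology $\H^i(k,\pi_1)$ of a finite module with $i \ge 3$, whose Tate--Shafarevich groups vanish by Poitou--Tate, the map $\Sha^2_{\mathrm{ab}}(k,L) \to \Sha^2(k,T)$ induced by $\psi$ is an isomorphism carrying $\mathrm{ab}^2([E])$ to $\psi_\ast[E] \in \H^2(k,T)$; and, by Borovoi's local and global comparison theorems for $\H^2$ of connected kernels, the everywhere-locally-neutral class $[E]$ is neutral as soon as $\psi_\ast[E] = 0$. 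So it remains to prove $\psi_\ast[E] = 0$.

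Finally, $\psi_\ast[E]$ is the $\H^2(k,T)$-component of the abelian descent type $\mu := \psi_\ast\lambda \in \DType(X,T)$: by Theorem \ref{customthm:Comparison} this $\mu$ corresponds to an extended type of Harari--Skorobogatov, and by Theorem \ref{thm:Abelian} it sits in the fundamental exact sequence, with $\psi_\ast[E]$ mapping to the transgression of the associated class in $\H^0(k,\H^1(\bar{X},\bar{T}))$ inside $\H^2(k,T(\bar{X}))$. Being everywhere locally neutral, $\psi_\ast[E]$ lies in $\Sha^2(k,T) \cong \Sha^1(k,\wh{T})^{\vee}$ (Poitou--Tate), and one then runs the Colliot-Th\'el\`ene--Sansuc/Harari--Skorobogatov duality argument (as in \cite{CTS1987Descente,HS2013Descent}): the Brauer--Manin pairing of $\psi_\ast[E]$ against a family $(x_v) \in X(k_\Omega)$ factors through the perfect pairing $\Sha^2(k,T) \times \Sha^1(k,\wh{T}) \to \mathbb{Q}/\mathbb{Z}$ composed with the map $\Sha^1(k,\wh{T}) \to \Be(X)$ coming from the extended type underlying $\mu$ and the Hochschild--Serre sequence for $\Gbb_m$ on $X$; since $X(k_\Omega)^{\Be(X)} \ne \varnothing$, this pairing is identically zero, and non-degeneracy forces $\psi_\ast[E] = 0$. \textbf{The main obstacle} is this last, purely arithmetic step: it is the only place where the Brauer--Manin hypothesis is used, and it rests on global duality together with the identification of the descent-theoretic pairing with a Brauer--Manin pairing valued in $\Be(X)$ (which is exactly why $\Be(X)$, and not $\Br_1(X)$, is the relevant subgroup). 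A secondary burden is recording Borovoi's abelianization criterion in precisely the form used above and verifying the vanishing $\Sha^2_{\mathrm{ab}}(k,L) \cong \Sha^2(k,T)$ for the semisimple layer; everything else is formal manipulation of the machinery of \S\ref{sec:Definition}--\S\ref{sec:Abelian} and standard structure theory of linear algebraic groups.
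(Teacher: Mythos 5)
Your skeleton is the paper's: reduce to neutrality of $[E]$ via Proposition \ref{prop:DescentType} \ref{prop:DescentType1}, get local neutrality at every place from $X(k_v)\neq\varnothing$ via Proposition \ref{prop:DescentType} \ref{prop:DescentType2}, push $\lambda$ forward to the maximal toric quotient $T$ of $L$, kill the resulting class in $\H^2(k,T)$ using the $\Be(X)$-hypothesis, and conclude. Where you diverge is in how the two arithmetic inputs are discharged, and here the paper simply cites them. (i) For ``everywhere locally neutral with trivial image in $\H^2(k,T)$ implies neutral'' the paper invokes \cite[Proposition 6.5]{Borovoi1993H2}; your justification via $\mathrm{ab}^2$, the claim that the semisimple layer only contributes $\H^i(k,\pi_1)$ with $i\ge 3$, and an isomorphism $\Sha^2_{\mathrm{ab}}(k,L)\cong\Sha^2(k,T)$ is not correct as stated, but it is also unnecessary: Borovoi's proposition is exactly the criterion you need, so this inaccuracy is harmless once you cite it. (ii) For $\pi_\ast[E]=0$ the paper does not pass through $\Sha^2(k,T)$ or through Theorem \ref{thm:Comparison} at all: by Theorem \ref{thm:Abelian} the composite $\DType(X,T)\to\H^2(k,T)\to\H^2(X,T)$ is zero, so $p^\ast\pi_\ast[E]=0$, and under $X(k_\Omega)^{\Be(X)}\neq\varnothing$ the map $\H^2(k,T)\to\H^2(X,T)$ is injective by \cite[Proposition 8.1 and Corollary 8.17]{HS2013Descent}, whence $\pi_\ast[E]=0$. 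Your Poitou--Tate/Brauer--Manin argument is in substance a sketch of the proof of that Harari--Skorobogatov statement, and the compatibility you flag as the ``main obstacle'' (identifying the pairing induced by the extended type with a Brauer--Manin pairing valued in $\Be(X)$) is precisely what their result encapsulates; citing it closes your remaining debt and renders the detour through the comparison theorem optional. So: same route as the paper, with its two black boxes partially unpacked --- one unpacking flawed but dispensable, the other left as an acknowledged gap that the cited literature settles.
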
 
\begin{proof}
    Let $\bar{G}^{\operatorname{u}}$ be the unipotent radical of $\bar{G}$, $\bar{G}^{\operatorname{red}}:=\bar{G}/\bar{G}^{\operatorname{u}}$ (which is reductive), $\bar{T}$ the abelianization of $\bar{G}^{\operatorname{red}}$ (it is the maximal toric quotient of $\bar{G}$), and $\pi: \bar{G} \to \bar{T}$ the canonical projection. Since $\Ker(\pi)$ is characteristic in $\bar{G}$, the outer action $\kappa$ on $\bar{G}$ induces an {\em action} of $\Gamma_k$ on $\bar{T}$, hence a $k$-form $T$ of $\bar{T}$. By Construction \ref{cons:PushforwardOfDescentType} and Lemma \ref{lemm:PushforwardOfDescentType1}, we have a map $\pi_\ast: \DType(X,L) \to \DType(X,T)$. If $\lambda = [\bar{Y},E] \in \DType(X,L)$, then $\pi_\ast \lambda = [\pi_\ast \bar{Y},\pi_\ast E]$, where $[\pi_\ast E] = \pi_\ast[E] \in \H^2(k,T)$. By Theorem \ref{thm:Abelian}, the natural map $\H^2(k,T) \to \H^2(X,T)$ takes $\pi_\ast[E]$ to $0$. Under the assumption $X(k_\Omega)^{\Be(X)} \neq \varnothing$, Harari and Skorobogatov have shown that this map is injective (combine Proposition 8.1 and Corollary 8.17 in \cite{HS2013Descent}). It follows that $\pi_\ast[E] = 0$. On the other hand, since $X(k_\Omega) \neq \varnothing$, the restriction of $[E]$ to $\H^2(k_v,L)$ is neutral for all $v \in \Omega$ by virtue of Proposition \ref{prop:DescentType} \ref{prop:DescentType2}. By \cite[Proposition 6.5]{Borovoi1993H2}, we conclude that the class $[E] \in \H^2(k,L)$ is neutral. Now, the existence of an $X$-torsor of descent type $\lambda$ follows from Proposition \ref{prop:DescentType} \ref{prop:DescentType1}.
\end{proof}

\begin{remk}
    Proposition \ref{prop:Connected} has been shown in the case where $L = \lien(G)$ for some finite commutative algebraic $k$-group $G$ in \cite[Lemma 3.7]{HW2022Supersolvable}. 
\end{remk}

\section{Comparing descent types and extended types} \label{sec:Comparison}

In this \S, we fix a perfect field $k$ and a smooth geometrically integral variety $p: X \to \Spec(k)$. We define the {\em units-Picard complex} $\UPic(\bar{X})$ of $X$ to be the cone of the canonical morphism
    \begin{equation*}
        \Gbb_{m,k}[1] \to (\tau_{\le 1} \Rbb p_\ast \Gbb_{m,X})[1]
    \end{equation*}
in the bounded-below derived category $\Dcal^+(k)$ of $\Gamma_k$-modules. By \cite[Lemma 2.3, Corollary 2.5]{BvH2009Picard}, the truncated complex $\tau_{\le 1} \Rbb p_\ast \Gbb_{m,X}$ is quasi-isomorphic to $[\bar{k}(X)^\times \xrightarrow{\div} \Div(\bar{X})]$, with $\bar{k}(X)^\times$ in degree $0$. Accordingly, $\UPic(\bar{X})$ is represented by the complex $[\bar{k}(X)^\times/\bar{k}^\times \xrightarrow{\div} \Div(\bar{X})]$ concentrated in degrees $-1$ and $0$, and its cohomology is given by
    \begin{equation*}
        \Hscr^{-1}(\UPic(\bar{X})) = \Unit(\bar{X}) := \bar{k}[X]^\times / \bar{k}^\times
    \end{equation*}
and 
    \begin{equation*}
        \Hscr^0(\UPic(\bar{X})) = \Pic(\bar{X}).
    \end{equation*}
We note that the abelian group $\Unit(\bar{X})$ is finitely generated and free. This can be easily seen if $X$ has a smooth compactification $X^c$, (that is, a smooth proper variety containing $X$ as a dense open subset\footnote{such a compactification always exists when $k$ has characteristic $0$, by resolution of singularities.}), since in this case we have an inclusion $\Unit(\bar{X}) \hookrightarrow \Div_{\infty} (\bar{X}^c)$, where $\Div_{\infty}$ means the group of Weil divisors supported outside $\bar{X}$. For a characteristic-free proof, see \cite[Proposition 4.1.2]{Brion2018Linearization}.

Let $G$ be a smooth\footnote{{\em i.e.}, the characteristic of $k$ does not divide the cardinality of the torsion subgroup of $\wh{G}$.} $k$-group of multiplicative type. When $Y \to X$ is a torsor under $G$, its {\em type} (a notion due to Colliot-Th\'el\`ene and Sansuc \cite{CTS1987Descente}) is the Galois equivariant homomorphism
    \begin{equation*}
        \type(Y): \wh{G} \to \Pic(\bar{X}), \quad \chi \mapsto \chi_\ast[\bar{Y}],
    \end{equation*}
(we recall that $\wh{G} = \Hom_{\bar{k}}(\bar{G},\Gbb_m)$, $[\bar{Y}] \in \H^1(\bar{X},\bar{G})$, and $\H^1(\bar{X},\Gbb_m) = \Pic(\bar{X})$). Harari and Skorobogatov generalized the classical type into the {\em extended type} $\xtype(Y)$, which is a morphism $\wh{G} \to \UPic(\bar{X})$ in the derived category $\Dcal^+(k)$. More precisely, by \cite[Proposition 8.1]{HS2013Descent}, we have the top row of the following commutative diagram

\begin{equation} \label{eq:ExtendTypeExactSequence}
    \xymatrix@C-1pc{
	& \H^1(k,G) \ar[r]^-{p^\ast} \ar[d] & \H^1(X,G) \ar[rr]^-{\xtype} \ar@{=}[d] && \Hom_k(\wh{G},\UPic(\bar{X})) \ar[rr]^-{\partial} \ar[d] && \H^2(k,G) \ar[r]^-{p^\ast} \ar[d] & \H^2(X,G) \ar@{=}[d] \\
	0 \ar[r] & \H^1(k,G(\bar{X})) \ar[r] & \H^1(X,G) \ar[rr] && \H^0(k,\H^1(\bar{X},\bar{G})) \ar[rr]^-{\trans} && \H^2(k,G(\bar{X})) \ar[r] & \H^2(X,G),
    }
\end{equation}
the bottom row being issued from the Hochschild--Serre spectral sequence
    \begin{equation*}
        \H^p(k,\H^q(\bar{X},\bar{G})) \Rightarrow \H^{p+q}(X,G).
    \end{equation*}
Over $\bar{k}$, this tells us that $\H^1(\bar{X},\bar{G}) \cong \Hom_k(\wh{G},\UPic(\bar{X}))$, that is, an $\bar{X}$-torsor under $\bar{G}$ is perfectly determined by its extended type. Over $k$, the extended type classifies torsors up to twisting by a Galois cocycle $\sigma \in \Z^1(k,G)$. These features are similar to the those of the descent type. The aim of this \S{} is to show that these two notions of types are in fact equivalent. We shall compare the top rows of \eqref{eq:DescentAbelianExactSequence0} and \eqref{eq:ExtendTypeExactSequence}.

If $\Unit(\bar{X}) = 0$ (that is, if $\bar{k}[X]^\times = \bar{k}^\times$), then $G(\bar{X}) = G(\bar{k})$ and hence
    \begin{equation*}
        \Hom_k(\wh{G},\UPic(\bar{X})) = \Hom_k(\wh{G},\Pic(\bar{X})).
    \end{equation*}
In this case, the extended type coincides with the classical type, which gives an isomorphism
    \begin{equation*}
        \type: \Hom_k(\wh{G},\Pic(\bar{X})) \xrightarrow{\cong} \H^0(k,\H^1(\bar{X},\bar{G})).
    \end{equation*}
In other words, one may simply think of the type $\type(Y)$ as the class $[\bar{Y}] \in \H^0(k,\H^1(\bar{X},\bar{G}))$, just like in the classical descent theory of Colliot-Th\'el\`ene--Sansuc. The other extreme case is when $\Pic(\bar{X}) = 0$ (for example, when $X$ is a $k$-torsor under a torus). In this case, the classical type gives absolutely no information. In fact, the extended type is now an element of $\Ext^1_k(\wh{G},\Unit(\bar{X}))$.

Here is the main result of this \S.

\begin{thm} [Theorem \ref{customthm:Comparison}] \label{thm:Comparison}
    There exists a homomorphism 
        \begin{equation*}
            \Phi: \DType(X,G) \to \Hom_k(\wh{G},\UPic(\bar{X}))
        \end{equation*}
    of abelian groups, which is functorial in $X$ and $G$. It fits in a diagram
	\begin{equation} \label{eq:DiagramComparison}
		\xymatrix{
			\H^1(k,G) \ar[r]^{p^\ast} \ar@{=}[d] & \H^1(X,G) \ar[r]^-{\dtype} \ar@{=}[d] & \DType(X,G) \ar[rr]^{[\bar{Y},E] \mapsto [E]} \ar[d]^{\Phi} && \H^2(k,G) \ar@{=}[d] \ar[r]^{p^\ast} & \H^2(X,G) \ar@{=}[d] \\
			\H^1(k,G) \ar[r]^{p^\ast} & \H^1(X,G) \ar[r]^-{\xtype} & \Hom_k(\wh{G},\UPic(\bar{X})) \ar[rr]^-{\partial} && \H^2(k,G) \ar[r]^{p^\ast} & \H^2(X,G),
		}
	\end{equation}
	which commutes up to a sign, where the bottom row is exact. If $X$ is quasi-projective, then $\Phi$ is an isomorphism.
\end{thm}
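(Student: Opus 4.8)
The proof splits into three steps: (1) construct $\Phi$ and show it is a functorial homomorphism; (2) check that \eqref{eq:DiagramComparison} commutes up to sign; (3) deduce the isomorphism statement when $X$ is quasi-projective. Step (3) is formal: such an $X$, being smooth and geometrically integral, is reduced and geometrically connected, so the top row of \eqref{eq:DiagramComparison} is exact by Theorem~\ref{thm:Abelian} and the bottom row is exact by \cite[Proposition~8.1]{HS2013Descent}; since the four outer vertical arrows are identities, the five lemma forces $\Phi$ to be bijective (the signs are harmless here, since exactness, kernels and images do not see them). The real content therefore lies in steps (1)--(2), and step (1) is the crux.

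\textbf{Constructing $\Phi$.} By Proposition~\ref{prop:H2Transgression}, a descent type $\lambda=[\bar{Y},E]\in\DType(X,G)$ is the datum of a torsor $f\colon\bar{Y}\to\bar{X}$ under $\bar{G}$ with $[\bar{Y}]\in\H^0(k,\H^1(\bar{X},\bar{G}))$ together with a topological subextension $E\subseteq\SAut_G(\bar{Y}/X)$; since $\SAut_G(\bar{Y}/X)$ is an extension of $\Gamma_k$ by $G(\bar{X})$ of class $\trans([\bar{Y}])$ and $E\cap G(\bar{X})=G(\bar{k})$, the subgroup $E$ amounts to a continuous splitting of the induced extension of $\Gamma_k$ by $G(\bar{X})/G(\bar{k})$, which is $\Gamma_k$-equivariantly $\Hom_{\mathbb{Z}}(\wh{G},\Unit(\bar{X}))$ (because $\bar{k}^\times$ is divisible). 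On the other hand, over $\bar{k}$ there is a natural, $\Gamma_k$-equivariant isomorphism $\H^1(\bar{X},\bar{G})\cong\Hom_{\Dcal(\bar{k})}(\wh{G},\UPic(\bar{X}))$ — the case $\bar{G}=\Gbb_{m}$ being $\Pic(\bar{X})=\H^0(\UPic(\bar{X}))$ — and the discrepancy between $\Hom_k(\wh{G},\UPic(\bar{X}))$ and $\H^0(k,\H^1(\bar{X},\bar{G}))$ is controlled by $\H^{\bullet}(k,\Hom_{\mathbb{Z}}(\wh{G},\Unit(\bar{X})))$ in just the same way the choices of $E$ are. I would therefore \emph{define} $\Phi(\lambda)$ to be the morphism $\wh{G}\to\UPic(\bar{X})$ with $\bar{Y}$ as its underlying $\bar{G}$-torsor and ``units part'' dictated by the splitting $E$; concretely this is done against the model $\UPic(\bar{X})\simeq[\bar{k}(X)^\times/\bar{k}^\times\xrightarrow{\div}\Div(\bar{X})]$ and the explicit description of the extended type from \S\ref{subsec:ExtendedType}, passing through a resolution of $\wh{G}$ to treat its torsion (where $\Hom_{\Dcal^+(k)}(\wh{G},-)$ is strictly finer than $\Hom_{\Gamma_k}(\wh{G},\H^0(k,-))$), and I would arrange the construction so that $\Phi(\lambda)$ retains $E$ as its underlying extension of $\Gamma_k$ by $G(\bar{k})$. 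Independence of the representative $(\bar{Y},E)$ must then be verified; additivity of $\Phi$ follows from comparing the external product of descent data (Definition~\ref{defn:ExternalProduct}), which on the $E$'s is the Baer sum, with the sum on $\Hom_k(\wh{G},\UPic(\bar{X}))$, exactly as in Lemma~\ref{lemm:SumOfDescentType}; functoriality in $X$ and $G$ follows from Lemma~\ref{lemm:DescentTypePullbackVsPushforward} and the functoriality of $\xtype$.

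\textbf{Commutativity of \eqref{eq:DiagramComparison}.} The two outer squares commute on the nose. For the square with $\dtype$ and $\xtype$: if $Y\to X$ is a genuine torsor under $G$, then $E_Y=G(\bar{k})\rtimes\varsigma_Y(\Gamma_k)$, and tracing it through the construction above reproduces the recipe of \S\ref{subsec:ExtendedType} for $\xtype(Y)$, whence $\Phi\circ\dtype=\pm\xtype$ (the sign being that of the conventions there). For the square with the connecting maps: since $\Phi(\lambda)$ has underlying extension $E$ and, in the Hochschild--Serre setting, $\partial$ sends an extended type to the class of its underlying extension of $\Gamma_k$ by $G(\bar{k})$ — compatibly with the identification (used already in the proof of Theorem~\ref{thm:Abelian}, via Proposition~\ref{prop:H2Transgression} \ref{prop:H2Transgression4}) of $[\bar{Y},E]\mapsto[E]$ with the transgression — we get $\partial\circ\Phi=\pm([\bar{Y},E]\mapsto[E])$ essentially by construction.

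\textbf{Main obstacle.} The delicate part is step (1): a descent datum is a very concrete object (semilinear automorphisms of a torsor), while $\Hom_k(\wh{G},\UPic(\bar{X}))$ is a Hom-group in a derived category, so one must work throughout with the two-term representative of $\UPic(\bar{X})$ and, for groups of multiplicative type with torsion in $\wh{G}$, pass through a resolution of $\wh{G}$ rather than argue one character at a time. This is exactly where the explicit and accessible model for the extended type built in \S\ref{subsec:ExtendedType} is indispensable: it is what turns the dictionary ``descent datum $\leftrightarrow$ morphism $\wh{G}\to\UPic(\bar{X})$'' into a checkable identity. Once $\Phi$ is in place and set up to preserve the underlying extension, steps (2) and (3) are routine.
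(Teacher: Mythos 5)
There is a genuine gap, and it sits exactly where you acknowledge the difficulty lies. Your step (1) never actually produces $\Phi$. The observation that, $\bar{Y}$ being fixed, the subgroup $E$ is equivalent to a continuous splitting of the extension of $\Gamma_k$ by $G(\bar{X})/G(\bar{k})\cong\Hom_{\mathbb{Z}}(\wh{G},\Unit(\bar{X}))$ is correct, but ``the morphism with $\bar{Y}$ as underlying torsor and units part dictated by $E$, via a resolution of $\wh{G}$'' is a statement of intent, not a definition: nothing in the proposal specifies an element of $\Ext^1_k(\wh{G},[\bar{k}(X)^\times/\bar{k}^\times\xrightarrow{\div}\Div(\bar{X})])$, nor lets you check independence of the representative $(\bar{Y},E)$ or additivity (you invoke ``exactly as in Lemma~\ref{lemm:SumOfDescentType}'', but that argument, like Lemma~\ref{lemm:ComparisonMapIsHomomorphism}, is about a specific construction and cannot be quoted for an unspecified one). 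The device the paper uses, and which you never mention, is to let $E$ act on $\bar{k}(\bar{Y})^\times_{\bar{G}}$ by $\alpha\cdot a=q(\alpha)_{\#}\circ a\circ\alpha^{-1}$, observe that $G(\bar{k})$ acts trivially modulo $\bar{k}^\times$ so that $\bar{k}(\bar{Y})^\times_{\bar{G}}/\bar{k}^\times$ becomes a discrete $\Gamma_k$-module, and define $\Phi(\lambda)$ as the (inverse) class of the resulting four-term exact sequence of two-term complexes of $\Gamma_k$-modules analogous to \eqref{eq:Comparison8}; no resolution of $\wh{G}$ is needed, and it is this concrete model that makes every subsequent verification possible.

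The second, and more serious, gap is your treatment of the square involving $\partial$. You assert that $\partial$ ``sends an extended type to the class of its underlying extension of $\Gamma_k$ by $G(\bar{k})$'' and that the identity $\partial\circ\Phi=\pm([\bar{Y},E]\mapsto[E])$ then holds ``essentially by construction.'' But an element of $\Hom_k(\wh{G},\UPic(\bar{X}))$ has no ``underlying extension of $\Gamma_k$ by $G(\bar{k})$''; $\partial$ is the Yoneda product with the class of $0\to\bar{k}^\times\to[\bar{k}(X)^\times\to\Div(\bar{X})]\to[\bar{k}(X)^\times/\bar{k}^\times\to\Div(\bar{X})]\to 0$ under $\H^2(k,G)\cong\Ext^2_k(\wh{G},\Gbb_m)$, and identifying $\partial(\Phi(\lambda))$ with $-[E]$ is precisely the substantial half of the paper's proof: one localizes on $X$ to kill the image in $\Hom_k(\wh{G},\Pic(\bar{X}))$, introduces the multiplicative groups $M$ and $T$ with $\wh{M}=\bar{k}[\bar{Y}]^\times_{\bar{G}}/\bar{k}^\times$ and $\wh{T}=\Unit(\bar{X})$, rewrites $\partial(\Phi(\lambda))$ as the connecting map $\delta([\tau])$ for $1\to G\to M\to T\to 1$, builds explicit liftings $\alpha_s\in\SAut_G(\bar{Y}/X)$ from a cochain $\mu$ lifting $\tau$, and then needs Lemma~\ref{lemm:CharacterizationofE} --- that membership in $E$ is detected by the induced action on $\wh{M}$, proved via the perfect pairing $G(\bar{k})\times\wh{G}\to\bar{k}^\times$ --- to conclude $\alpha_s\in E$. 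Declaring that you will ``arrange the construction so that $\Phi(\lambda)$ retains $E$'' does not dispose of this; it merely relocates the same unproved identification into the definition of $\Phi$, where it would still have to be reconciled with the derived-category definition of $\partial$. The five-lemma step and the $\dtype$/$\xtype$ square are handled in the same spirit as the paper, but with the construction of $\Phi$ and the $\partial$-compatibility missing, the proof does not go through as written.
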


\subsection{Extended types revisited} \label{subsec:ExtendedType}

The first step in the proof of Theorem \ref{thm:Comparison} is to describe the map $\xtype$ explicitly. Let $f: Y \to X$ be a torsor under $G$. As in \cite[\S 2]{HS2013Descent}, we define $\bar{k}[Y]^\times_G \subseteq \bar{k}[Y]^\times$ to be the $\Gamma_k$-submodule consisting of invertible functions $a$ for which there exists a character $\chi \in \wh{G}$ such that $a(y \cdot g) = a(y) \chi(g)$ for all $y \in Y(\bar{k})$ and $g \in G(\bar{k})$, that is, $a \circ \rho_g = a \chi(g)$ (where $\rho_g: \bar{Y} \to \bar{Y}$ denotes the right action of $g$). This character is necessarily unique and shall be denoted by $\chi_a$. The homomorphism $\bar{k}[Y]^\times_G \xrightarrow{a \mapsto \chi_a} \wh{G}$ is $\Gamma_k$-equivariant. Indeed, for all $a \in \bar{k}[Y]^\times_G$, $s \in \Gamma_k$, and $g \in G(\bar{k})$, we have
\begin{equation*}
	\chi_{\tensor[^s]{a}{}}(g) = \frac{\tensor[^s]{a}{} \circ \rho_g}{\tensor[^s]{a}{}} = \frac{s_{\#} \circ a \circ s_{\#}^{-1} \circ \rho_g}{s_{\#} \circ a \circ s_{\#}^{-1}} =  \frac{s_{\#} \circ a \circ \rho_{\tensor[^{s^{-1}}]{g}{}} \circ s_{\#}^{-1}}{s_{\#} \circ a \circ s_{\#}^{-1}} = s_{\#} \circ \chi_a(\tensor[^{s^{-1}}]{g}{}) = \tensor[^s]{\chi}{_a}(\tensor[^{s^{-1}}]{g}{}) = (\tensor[^s]{\chi}{_a})(g),
\end{equation*}
where $s_{\#}$ denotes the Galois action of $s$ on $\bar{Y}$ and $\Gbb_{m,\bar{k}}$. It follows that $\chi_{\tensor[^s]{a}{}} = \tensor[^s]{\chi}{_a}$ as claimed.

We also define the subsheaf $(f_\ast \Gbb_{m,Y})_G$ of the \'etale sheaf $f_\ast \Gbb_{m,Y}$ on $X$ as follows. For any \'etale morphism $U \to X$, the section group $(f_\ast \Gbb_{m,Y})_G(U) \subseteq (f_\ast \Gbb_{m,Y})(U) = \Mor_U(Y_U,\Gbb_{m,U})$ consists of the $U$-morphisms $a: Y_U \to \Gbb_{m,U}$ for which there exists a morphism $\chi: G_U \to \Gbb_{m,U}$ of $U$-group schemes such that $a(y \cdot g) = a(y)\chi(g)$ for all $y \in Y_U(\bar{k})$ and $g \in G_U(\bar{k})$. We have an exact sequence 
\begin{equation} \label{eq:Comparison1}
	0 \to \Gbb_{m,X} \to (f_\ast \Gbb_{m,Y})_G \to  \wh{G}_X \to 0
\end{equation}
of \'etale sheaves on $X$ (see  \cite[Proposition 8.9(i)]{HS2013Descent}), generalizing the relative version of Rosenlicht's lemma \cite[Proposition 1.4.2]{CTS1987Descente}. Note that $\bar{k}[Y]^\times_G = \bar{k}[Y]^\times$ and $(f_\ast \Gbb_{m,Y})_G = f_\ast \Gbb_{m,Y}$ in the case where $G$ is a torus. 

Under the identification $\H^1(X,G) \cong \Ext^1_X(\wh{G},\Gbb_m)$ ({\em cf.} \cite[Proposition 1.4.1]{CTS1987Descente}), the class $-[Y]$ corresponds to the class of the extension \eqref{eq:Comparison1} (see \cite[Proposition 8.9(ii)]{HS2013Descent}). Now, applying $\Rbb p_\ast$ to \eqref{eq:Comparison1} yields a distinguished triangle 
\begin{equation} \label{eq:Comparison2}
	\Rbb p_\ast\Gbb_{m,X} \to \Rbb p_\ast (f_\ast \Gbb_{m,Y})_G \to \Rbb p_\ast\wh{G}_X \to \Rbb p_\ast\Gbb_{m,X}[1]
\end{equation}
in $\Dcal^+(k)$. Since $\Rbb\Hom_X(\wh{G},-) = \Rbb\Hom_k(\wh{G},-) \circ \Rbb p_\ast$ (see {\em e.g.} \cite[Corollary 10.8.3]{Weibel1994Homological}), we have a canonical isomorphism $\Ext^1_X(\wh{G},\Gbb_m) \cong \Ext^1_k(\wh{G},\Rbb p_\ast \Gbb_{m,X})$, which takes the class of \eqref{eq:Comparison1} to that of the composite
\begin{equation*}
	\wh{G} = p_\ast \wh{G}_X \to \Rbb p_\ast \wh{G}_X \to \Rbb p_\ast\Gbb_{m,X}[1],
\end{equation*}
the last arrow being the one from \eqref{eq:Comparison2}. Here, we have $\wh{G} = p_\ast \wh{G}_X$ because $\bar{X}$ is connected. As in the proof of \cite[Proposition 8.1]{HS2013Descent}, we have $\Ext^1_k(\wh{G},\Rbb p_\ast\Gbb_{m,X}) \cong \Ext^1_k(\wh{G},\tau_{\le 1}\Rbb p_\ast\Gbb_{m,X})$. The above morphism hence factors through a morphism $\wh{G} \to (\tau_{\le 1} \Rbb p_\ast \Gbb_{m,X})[1]$, which, after composing with the canonical map $(\tau_{\le 1} \Rbb p_\ast \Gbb_{m,X})[1] \to \UPic(\bar{X})$, yields a morphism in $\Dcal^+(k)$ representing $-\xtype([Y])$. Our aim now is to describe the cone of $\wh{G} \to (\tau_{\le 1} \Rbb p_\ast \Gbb_{m,X})[1]$ as an explicit $2$-term complex.

Recall that $\tau_{\le 1} \Rbb p_\ast \Gbb_{m,X}$ is quasi-isomorphic to the complex $[\bar{k}(X)^\times \xrightarrow{\div} \Div(\bar{X})]$ concentrated in degrees $0$ and $1$. We start by investigating $\bar{k}(X)^\times$. Let $K:=k(X)$ and $L:=\bar{k} \otimes_k L = \bar{k}(X)$. The exact sequence \eqref{eq:Comparison1} for the torsor $Y_L \to \Spec(L)$ reads
\begin{equation*}
	1 \to \bar{L}^\times \to \bar{L}[Y_L]^\times_{G_L} \to \wh{G} \to 0.
\end{equation*}
Taking Galois cohomology (noting that $\H^1(L,\Gbb_m) = 0$ by Hilbert's Theorem 90 and $\H^0(L,\wh{G}) = \wh{G}$ since $G$ is split over $\bar{k} \subseteq L$), one obtain an exact sequence
\begin{equation} \label{eq:Comparison3}
	1 \to \bar{k}(X)^\times \to L[Y_L]^\times_{G_L} \to \wh{G} \to 0.
\end{equation}
On the other hand, taking cohomology of \eqref{eq:Comparison2} yields an exact sequence
\begin{equation} \label{eq:Comparison4}
	1 \to \bar{k}[X]^\times \xrightarrow{f^\ast} \bar{k}[Y]^\times_{G} \xrightarrow{a \mapsto \chi_a} \wh{G} \to \Pic(\bar{X}).
\end{equation}
For each non-empty open subset $U \subseteq X$, it is clear that $\bar{k}[Y_U]^\times_{G} \subseteq L[Y_L]^\times_{G_L}$. We define the $\Gamma_k$-module
\begin{equation} \label{eq:Comparison5}
	\bar{k}(Y)^\times_{G}:=\varinjlim_{U \subseteq X} \bar{k}[Y_U]^\times_{G},
\end{equation}
which {\em a priori} is a subgroup of $L[Y_L]^\times_{G_L}$. Since $\wh{G}$ is finitely generated as an abelian group, the composite $\wh{G} \to \Pic(\bar{X}) \to \Pic(\bar{U})$ is $0$ if $U$ is a sufficiently small. As $\bar{k}(X)^\times = \varinjlim\limits_{U \subseteq X} \bar{k}[U]^\times$, we have an exact sequence
\begin{equation} \label{eq:Comparison6}
	1 \to \bar{k}(X)^\times \xrightarrow{f^\ast} \bar{k}(Y)^\times_{G} \xrightarrow{a \mapsto \chi_a} \wh{G} \to 0.
\end{equation}
Comparing \eqref{eq:Comparison3} to \eqref{eq:Comparison6}, we see that, in fact, $\bar{k}(Y)^\times_{G} = L[Y_L]^\times_{G_L}$.

Let us now inspect the term $\Div(\bar{X})$. Although the $k$-variety $Y$ is not necessarily integral, it is smooth and pure-dimensional and we can talk about Weil divisors on it. Call an integral divisor $Z \subseteq Y$ {\em horizontal} if it is the Zariski closure of a divisor on the generic fibre $Y_K \to \Spec(K)$ of $f$, and {\em vertical} otherwise ({\em i.e.}, if $f(Z)$ does not contain the generic point of $X$). Let $\Div^{\hor}(Y)$ (resp. $\Div^{\ver}(Y)$) denote the (free) abelian groups of horizontal (resp. vertical) divisors on $Y$. Then $\Div(Y) = \Div^{\hor}(Y) \oplus \Div^{\ver}(Y)$. We say that a divisor $D = \sum_{\eta \in Y^{(1)}} n_{\eta} [\eta] \in \Div(Y)$ is {\em $G$-invariant} if for every $\xi \in X^{(1)}$ and $\eta,\eta' \in f^{-1}(\xi)$, one has $n_{\eta} = n_{\eta'}$.  If $G$ is a torus, then every divisor is $G$-invariant because $f$ has geometrically integral fibres. We write $\Div^{\ver,G}(Y) \subseteq \Div^{\ver}(Y)$ for the subgroup of vertical, $G$-invariant divisors. Then the flat pullback $f^\ast: \Div(X) \to \Div(Y)$ is injective with image $\Div^{\ver,G}(Y)$.

Over $\bar{k}$, a divisor $D \in \Div(\bar{Y})$ is $\bar{G}$-invariant if and only if $\rho_g^{\ast} D = D$ for all $g \in G(\bar{k})$ (as always, we use $\rho_g$ to denote the automorphism $y \mapsto y \cdot g$ of $\bar{Y}$). Since $\bar{k}(Y)^\times_{G} = L[Y_L]^\times_{G_L}$, any function $a \in \bar{k}(Y)^\times_{G}$ is regular on the generic fibre of $\bar{Y} \to \bar{X}$, {\em i.e.}, $\div(a) \in \Div^{\ver}(\bar{Y})$. Furthermore, it follows from the definition of the groups $\bar{k}[Y_U]^\times_G$ that $\div(a) \in \Div^{\ver,\bar{G}}(\bar{Y})$. In view of \eqref{eq:Comparison6}, we have an exact sequence 
\begin{equation} \label{eq:Comparison7}
	0 \to [\bar{k}(X)^\times \xrightarrow{\div} \Div(\bar{X})] \xrightarrow{f^\ast} [\bar{k}(Y)^\times_G \xrightarrow{\div} \Div^{\ver,\bar{G}}(\bar{Y})] \to \wh{G} \to 0
\end{equation}
of complexes of $\Gamma_k$-modules, where the two complexes are concentrated in degrees $0$ and $1$.

\begin{prop} \label{prop:Comparison1}
	Let $f: Y \to X$ be a torsor under $G$. Under the isomorphism
	\begin{equation*}
		\H^1(X,G) \cong \Ext^1_X(\wh{G},\Gbb_m) \cong \Ext^1_k(\wh{G},\Rbb p_\ast \Gbb_m) \cong \Ext^1_k(\wh{G},\tau_{\le 1} \Rbb p_\ast \Gbb_m) = \Ext^1_k(\wh{G},[\bar{k}(X)^\times \xrightarrow{\div} \Div(\bar{X})]),
	\end{equation*}
	the element $[Y] \in \H^1(X,G)$ corresponds to the inverse class of the extension \eqref{eq:Comparison7}. Consequently, the extended type $\xtype([Y]) \in \Hom_k(\wh{G},\UPic(\bar{X})) = \Ext^1_k({\wh{G}},[\bar{k}(X)^\times/\bar{k}^\times \xrightarrow{\div} \Div(\bar{X})])$ is the inverse class of the extension
	\begin{equation} \label{eq:Comparison8}
		0 \to [\bar{k}(X)^\times/\bar{k}^\times \xrightarrow{\div} \Div(\bar{X})] \xrightarrow{f^\ast} [\bar{k}(Y)^\times_G/\bar{k}^\times \xrightarrow{\div} \Div^{\ver,\bar{G}}(\bar{Y})] \to \wh{G} \to 0.
	\end{equation}
\end{prop}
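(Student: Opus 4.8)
The plan is to transport the class $[Y]\in\H^1(X,G)$ through the chain of isomorphisms in the statement and to recognise the result as the class of the extension \eqref{eq:Comparison7}. The input is what is recalled just before the statement: under $\H^1(X,G)\cong\Ext^1_X(\wh{G},\Gbb_m)$ the class $-[Y]$ is the class of the relative Rosenlicht extension \eqref{eq:Comparison1} (by \cite[Proposition 8.9(ii)]{HS2013Descent}), and the further identifications $\Ext^1_X(\wh{G},\Gbb_m)\cong\Ext^1_k(\wh{G},\Rbb p_\ast\Gbb_m)\cong\Ext^1_k(\wh{G},\tau_{\le 1}\Rbb p_\ast\Gbb_m)$ carry this class to the image of $\mathrm{id}_{\wh{G}}\in\Hom_k(\wh{G},\wh{G})=\Hom_{\Dcal^+(k)}(\wh{G},\Rbb p_\ast\wh{G}_X)$ under composition with the connecting morphism $\delta$ of the triangle \eqref{eq:Comparison2} (and restriction to the truncation); the equality of $\Hom$-groups uses that $\bar X$ is connected, so $\H^0(\Rbb p_\ast\wh{G}_X)=\wh{G}$ and there are no negative $\Ext$'s. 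Identifying $[\bar{k}(X)^\times\xrightarrow{\div}\Div(\bar X)]$ with $\tau_{\le 1}\Rbb p_\ast\Gbb_m$ by \cite[Lemma 2.3, Corollary 2.5]{BvH2009Picard}, I am thus reduced to the assertion that the composite $\wh{G}\xrightarrow{\partial}(\tau_{\le 1}\Rbb p_\ast\Gbb_m)[1]\to(\Rbb p_\ast\Gbb_m)[1]$ equals $\delta\circ u$, where $\partial$ is the connecting morphism of the short exact sequence of complexes \eqref{eq:Comparison7} and $u\colon\wh{G}=p_\ast\wh{G}_X\to\Rbb p_\ast\wh{G}_X$ is the adjunction unit. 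The sign present in $-[Y]$ then turns ``class of \eqref{eq:Comparison7}'' into the ``inverse class'', as stated.

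To prove this I will build a morphism of distinguished triangles in $\Dcal^+(k)$, from the triangle $[\bar{k}(X)^\times\xrightarrow{\div}\Div(\bar X)]\xrightarrow{f^\ast}[\bar{k}(Y)^\times_G\xrightarrow{\div}\Div^{\ver,\bar{G}}(\bar Y)]\to\wh{G}\xrightarrow{\partial}[\bar{k}(X)^\times\xrightarrow{\div}\Div(\bar X)][1]$ attached to \eqref{eq:Comparison7}, to the triangle \eqref{eq:Comparison2}, in which the left-hand vertical arrow is the composite $[\bar{k}(X)^\times\xrightarrow{\div}\Div(\bar X)]\xrightarrow{\sim}\tau_{\le 1}\Rbb p_\ast\Gbb_m\hookrightarrow\Rbb p_\ast\Gbb_m$ and the right-hand vertical arrow will turn out to be $u$. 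The one thing that really needs constructing is the middle vertical arrow $b\colon[\bar{k}(Y)^\times_G\xrightarrow{\div}\Div^{\ver,\bar{G}}(\bar Y)]\to\Rbb p_\ast(f_\ast\Gbb_{m,Y})_G$ lying over $f^\ast$ and over $\Gbb_{m,X}\hookrightarrow(f_\ast\Gbb_{m,Y})_G$. I expect this to be the analogue, for the sheaf $(f_\ast\Gbb_{m,Y})_G$ on $X$, of the computation underlying \cite[Lemma 2.3, Corollary 2.5]{BvH2009Picard}: one writes down the flasque resolution $0\to(f_\ast\Gbb_{m,Y})_G\to j_\ast\bigl((f_\ast\Gbb_{m,Y})_G|_{\eta_X}\bigr)\to\bigoplus_{\xi\in X^{(1)}}i_{\xi\ast}\mathbb{Z}\to 0$ on $X$ (with $j\colon\eta_X\hookrightarrow X$ the generic point), whose exactness is the relative Rosenlicht computation of the paragraph preceding the statement — injectivity because $Y_U$ is dense in its generic fibre, surjectivity onto $\bigoplus i_{\xi\ast}\mathbb{Z}$ because a local uniformiser at $\xi\in X^{(1)}$ pulls back to an element of $\bar{k}(X)^\times\subseteq\bar{k}(Y)^\times_G$. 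Applying $\Rbb p_\ast$, using $\tau_{\le 1}\Rbb p_\ast\bigl(\bigoplus i_{\xi\ast}\mathbb{Z}\bigr)=\Div^{\ver,\bar{G}}(\bar Y)[0]$ and $\H^0\bigl(\Rbb p_\ast j_\ast((f_\ast\Gbb_{m,Y})_G|_{\eta_X})\bigr)=\bar{k}(Y)^\times_G$ (this is \eqref{eq:Comparison6} read over $\bar{k}(X)$), one gets $b$ as the map on fibres of the columns of the evident commuting square with rows $\bar{k}(Y)^\times_G[0]\to\Rbb p_\ast j_\ast((f_\ast\Gbb_{m,Y})_G|_{\eta_X})$ and $\Div^{\ver,\bar{G}}(\bar Y)[0]\to\Rbb p_\ast\bigl(\bigoplus i_{\xi\ast}\mathbb{Z}\bigr)$; functoriality of this construction in the sheaf produces the compatibility with $f^\ast$ and with $\Gbb_{m,X}\hookrightarrow(f_\ast\Gbb_{m,Y})_G$, hence the commuting left-hand square.

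Once $b$ is available, functoriality of the connecting morphism gives $\delta\circ\bar{b}=(\text{left vertical})[1]\circ\partial$, where $\bar{b}\colon\wh{G}\to\Rbb p_\ast\wh{G}_X$ is the arrow induced on third objects; it remains to check $\bar{b}=u$. As $\bar X$ is connected this is a question about $\H^0$, and passing to cohomology sheaves in the morphism of triangles gives a ladder of exact sequences whose top row is the exact sequence \eqref{eq:Comparison4} (it computes the cohomology of the cone $\wh{G}$ of the top triangle) and whose bottom row is the long exact sequence of $\Rbb p_\ast$ applied to \eqref{eq:Comparison1}, with identity maps on the $\bar{k}[X]^\times$, $\bar{k}[Y]^\times_G$ and $\Pic(\bar X)$ columns (the first two because $b$ and the Borne--van Hamel map are identities on $\H^0$, the last because the latter is a quasi-isomorphism); the four lemma then forces $\H^0(\bar{b})=\mathrm{id}_{\wh{G}}$, i.e.\ $\bar{b}=u$. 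This establishes the assertion for $[Y]$. The assertion for $\xtype([Y])$ is then formal: $\UPic(\bar X)$ is the cone of $\Gbb_{m,k}[1]\to(\tau_{\le 1}\Rbb p_\ast\Gbb_m)[1]$, so the comparison map $\Ext^1_k(\wh{G},\tau_{\le 1}\Rbb p_\ast\Gbb_m)\to\Hom_k(\wh{G},\UPic(\bar X))$ is induced by $[\bar{k}(X)^\times\xrightarrow{\div}\Div(\bar X)]\to[\bar{k}(X)^\times/\bar{k}^\times\xrightarrow{\div}\Div(\bar X)]$; pushing \eqref{eq:Comparison7} out along this map — which only alters the degree-$0$ terms, where $\bar{k}^\times$ injects into both $\bar{k}(X)^\times$ and $\bar{k}(Y)^\times_G$ — turns it into \eqref{eq:Comparison8}, and by the recollection before the statement the image of the class of $[Y]$ is $\xtype([Y])$.

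The step I expect to be the main obstacle is the construction in the second paragraph: setting up the flasque resolution of $(f_\ast\Gbb_{m,Y})_G$ on $X$ and the resulting description of $\tau_{\le 1}\Rbb p_\ast(f_\ast\Gbb_{m,Y})_G$ correctly — uniformly in $\Gamma_k$, compatibly with the $\Gbb_m$-case of Borne--van Hamel, and with due care for the fact that $Y$, hence $\bar Y$, need not be integral but only smooth and pure-dimensional, so that $\bar{k}(Y)^\times_G$, $\Div^{\ver,\bar{G}}(\bar Y)$ and the generic-fibre sheaf must all be read componentwise as in \cite[\S 2, \S 8]{HS2013Descent} and the paragraph preceding the statement.
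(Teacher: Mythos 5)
Your route is essentially the paper's (resolve $(f_\ast \Gbb_{m,Y})_G$ by its generic-fibre pushforward together with a sheaf of vertical $G$-invariant divisors, map the two-term complex of global sections into $\Rbb p_\ast$ of the sheaf, and read the class of \eqref{eq:Comparison7} against the triangle \eqref{eq:Comparison2}), but the step where you identify the induced arrow $\bar{b}\colon \wh{G}\to \Rbb p_\ast\wh{G}_X$ with the adjunction unit $u$ does not work as written. Passing to cohomology sheaves only tells you that $\H^0(\bar{b})$ agrees with the identity on the image of $\bar{k}[Y]^\times_G \xrightarrow{a\mapsto\chi_a} \wh{G}$ and is compatible with the two maps to $\Pic(\bar{X})$; the four lemma then yields that $\H^0(\bar{b})$ is an isomorphism, not that it is the identity. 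Indeed, $\H^0(\bar{b})-\id$ is constrained only to vanish on $\Ker(\wh{G}\to\Pic(\bar{X}))$ and to take values in that kernel, and such endomorphisms of $\wh{G}$ need not vanish (they correspond to homomorphisms from the image of $\wh{G}$ in $\Pic(\bar{X})$ back into the kernel). Since the completing arrow of a morphism of distinguished triangles is not unique, and since the whole point is to compare $\delta\circ u$ with your connecting map $\partial$, this identification is exactly where the content of the proposition sits and cannot be left to that argument: with an unidentified $\bar{b}$ you only get $\delta\circ\bar{b}=(\text{left vertical})[1]\circ\partial$, which does not yet relate $\partial$ to $-[Y]$.

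The repair is already inside your construction, and it is what the paper does: do not complete the triangle morphism a posteriori. The rows \eqref{eq:Comparison9} and $\iDiv_X\xrightarrow{\ \cong\ }\iDiv^{\ver,G}_Y$ assemble into a termwise short exact sequence of two-term complexes of \'etale sheaves on $X$, whose termwise $p_\ast$ is \eqref{eq:Comparison7} (using $p_\ast\wh{G}_X=\wh{G}$, $\bar{X}$ being connected) and whose $\Rbb p_\ast$ is quasi-isomorphic, term by term, to the triangle \eqref{eq:Comparison2} once one knows the middle column $0\to(f_\ast\Gbb_{m,Y})_G\to j_\ast\bigl((f_\ast\Gbb_{m,Y})_G|_{\eta_X}\bigr)\to\iDiv^{\ver,G}_Y\to 0$ is exact. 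Applying the natural transformation $p_\ast\Rightarrow\Rbb p_\ast$ to this entire sequence produces the morphism of triangles all at once, with third vertical arrow literally the unit $p_\ast\wh{G}_X\to\Rbb p_\ast\wh{G}_X$, so no post hoc identification is needed. On the exactness of that middle column: your sketch addresses injectivity and surjectivity but not exactness at the middle term (a section whose vertical $G$-invariant divisor vanishes must already lie in $(f_\ast\Gbb_{m,Y})_G$, which requires knowing its full divisor is vertical); the paper obtains this for free from exactness of the $\Gbb_m$-column and of the two rows \eqref{eq:Comparison1} and \eqref{eq:Comparison9}, and you should either argue it directly or quote that diagram chase. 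With these two points fixed, the rest of your argument (the sign bookkeeping and the passage from \eqref{eq:Comparison7} to \eqref{eq:Comparison8} by quotienting the degree-zero terms by $\bar{k}^\times$) is correct and matches the paper.
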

\begin{proof}
    The exact sequence \eqref{eq:Comparison1} for the generic fibre $Y_K \to \Spec(K)$ is 
        \begin{equation*}
            0 \to \Gbb_{m,K} \to (f_\ast \Gbb_{m,Y_K})_{G_K} \to \wh{G}_K \to 0.
        \end{equation*}
    Let $j: \Spec(K) \to X$ denote the inclusion of the generic point. Since $\Rbb^1 j_\ast \Gbb_{m,K} = 0$ by Hilbert's Theorem 90 and since $j_\ast \wh{G}_K = j_\ast (p \circ j)^\ast \wh{G} = j_\ast j^\ast p^\ast \wh{G} = j_\ast j^\ast (\wh{G}_X) = \wh{G}_X$, we have an exact sequence
	\begin{equation} \label{eq:Comparison9}
		0 \to j_\ast \Gbb_{m,K} \to j_\ast (f_\ast \Gbb_{m,Y_K})_{G_K} \to \wh{G}_X \to 0
	\end{equation}
	of \'etale sheaves on $X$. Let $\iDiv_X$ denote the sheaf of Weil divisors on $X$, and $\iDiv^{\ver,G}_Y$ the sheaf of vertical, $G$-invariant divisors on $Y$. Then, \eqref{eq:Comparison1} and \eqref{eq:Comparison9} fit in a commutative diagram
	\begin{equation*}
		\xymatrix{
			& 0 \ar[d] & 0 \ar[d]\\
			0 \ar[r] & \Gbb_{m,X} \ar[r] \ar[d] & (f_\ast \Gbb_{m,Y})_G \ar[r] \ar[d] & \wh{G}_X \ar[r] \ar@{=}[d] & 0\\
			0 \ar[r] & j_\ast \Gbb_{m,K} \ar[r] \ar[d] & j_\ast (f_\ast \Gbb_{m,Y})_G \ar[r] \ar[d] & \wh{G}_X \ar[r] & 0 \\
			0 \ar[r] & \iDiv_X \ar[r]^-{\cong} \ar[d] & \iDiv^{\ver,G}_Y \ar[d] & \\
			& 0 & 0.
		}
	\end{equation*}
    Since the left column is exact, it is also the case for the middle one. In particular, we have quasi-isomorphisms 
        \begin{equation*}
            \Rbb p_\ast \Gbb_{m,X} \simeq \Rbb p_\ast[j_\ast \Gbb_{m,K} \to \iDiv_X]
        \end{equation*}
    and
        \begin{equation*}
            \Rbb p_\ast(f_\ast \Gbb_{m,Y})_G \simeq \Rbb p_\ast[j_\ast (f_\ast \Gbb_{m,Y})_G \to \iDiv^{\ver,G}_Y].
        \end{equation*}
    Now, any complex of sheaves $\Fcal$ on $X_{\et}$ yields a canonical morphism $p_\ast \Fcal \to \Rbb p_\ast \Fcal$ in $\Dcal^+(k)$, which is functorial in $\Fcal$. Since $p_\ast j_\ast \Gbb_{m,K} = \bar{k}(X)^\times$, $p_\ast \iDiv_X = \Div(\bar{X})$, $p_\ast j_\ast (f_\ast \Gbb_{m,Y})_G = \bar{k}(Y)^\times_G$, and $p_\ast \iDiv^{\ver,G}_Y = \Div^{\ver,\bar{G}}(\bar{Y})$, we obtain a morphism of distinguished triangles from \eqref{eq:Comparison7} to \eqref{eq:Comparison2}, {\em i.e.}, a commutative diagram
	\begin{equation*}
		\xymatrix{
			[\bar{k}(X)^\times \xrightarrow{\div} \Div(\bar{X})] \ar[r]^-{f^\ast} \ar[d] & [\bar{k}(Y)^\times_G \xrightarrow{\div} \Div^{\ver,\bar{G}}(\bar{Y})] \ar[r] \ar[d] & \wh{G} \ar[r] \ar[d] & [\bar{k}(X)^\times \xrightarrow{\div} \Div(\bar{X})][1] \ar[d] \\
			\Rbb p_\ast \Gbb_{m,X} \ar[r] & \Rbb p_\ast (f_\ast \Gbb_{m,X})_G \ar[r] & \Rbb p_\ast \wh{G}_X \ar[r] & \Rbb p_\ast \Gbb_{m,X}[1].
		}
	\end{equation*}
	Since the complex $[\bar{k}(X)^\times \xrightarrow{\div} \Div(\bar{X})]$ represents the object $\tau_{\le 1} \Rbb p_\ast \Gbb_{m,X}$, it follows from the discussion at the beginning of this paragraph that $-\type([Y])$ is precisely the class of the morphism $\wh{G} \to [\bar{k}(X)^\times \xrightarrow{\div} \Div(\bar{X})][1]$ from the top row of the above diagram, {\em i.e.}, the class of \eqref{eq:Comparison7}.
\end{proof}

\subsection{The comparison map} \label{subsec:ComparisonMap}

Proposition \ref{prop:Comparison1} gives us a hint on how to construct an element of $\Hom_k(\wh{G},\UPic(\bar{X}))$ from a descent type $\lambda = [\bar{Y},E] \in \DType(X,G)$ (instead of a torsor $Y \to X$ under $G$). We have a complex $[\bar{k}(\bar{Y})^\times_{\bar{G}}/\bar{k}^\times \xrightarrow{\div} \Div^{\ver,\bar{G}}(\bar{Y})]$ of {\em abelian groups}, and we shall use the subgroup $E \subseteq \SAut_G(\bar{Y}/X)$ to define a suitable Galois action on it.

\begin{lemm} \label{lemm:ActionOfSAut}
    Let $f: \bar{Y} \to \bar{X}$ be a torsor under $\bar{G}$ such that $[\bar{Y}] \in \H^0(k,\H^1(\bar{X},\bar{G}))$. Recall that the sequence
	\begin{equation*}
		1 \to G(\bar{X}) \to \SAut_G(\bar{Y}/X) \xrightarrow{q} \Gamma_k \to 1
	\end{equation*}
    is exact ({\em cf.} \eqref{eq:H2Transgression}). We define an action of $\SAut_G(\bar{Y}/X)$ on $\bar{k}[\bar{Y}]^\times_{\bar{G}}$ by the formula
	\begin{equation} \label{eq:ActionOfSAut}
		\forall \alpha \in \SAut_G(\bar{Y}/X), \forall a \in \bar{k}[\bar{Y}]^\times_{\bar{G}}, \quad \alpha \cdot a := q(\alpha)_{\#} \circ a \circ \alpha^{-1},
	\end{equation}
    Then the following are true.
	\begin{enumerate}
		\item \label{lemm:ActionOfSAut1}  
		The action \eqref{eq:ActionOfSAut} is well defined, and the map $\bar{k}[\bar{Y}]^\times_{\bar{G}} \xrightarrow{a \mapsto \chi_a} \wh{G}$ is $q$-equivariant.
		
	   \item \label{lemm:ActionOfSAut2} The group $G(\bar{X}) = \Ker(q)$ acts trivially on the image of the inclusion $f^\ast: \bar{k}[X]^\times \hookrightarrow \bar{k}[\bar{Y}]^\times_{\bar{G}}$, and the induced action of $\Gamma_k = \Coker(q)$ on this image is the natural Galois action.

		\item \label{lemm:ActionOfSAut3}		Let $\phi: X' \to X$ be a morphism between smooth geometrically integral $k$-varieties, and let $\bar{Y}' := \phi^\ast \bar{Y}  = \bar{Y} \times_{\bar{X}} \bar{X}'$. Recall from Construction \ref{cons:PullbackOfDescentType} that we have a homomorphism $\phi^\ast: \SAut_G(\bar{Y}/X) \to \SAut_G(\bar{Y}'/X')$. The pullback homomorphism $\bar{k}[\bar{Y}]^\times_{\bar{G}} \to \bar{k}[\bar{Y}']^\times_{\bar{G}}$ is $\phi^\ast$-equivariant.
		
		\item \label{lemm:ActionOfSAut4} Let $\varphi: G \to H$ be a morphism of groups of $k$-groups of multiplicative type, $\bar{Z} \to \bar{X}$ a torsor under $H$ such that $[\bar{Z}] \in \H^0(k,\H^1(\bar{X},\bar{H}))$, and $\iota: \bar{Y} \to \bar{Z}$ a $\varphi$-equivariant $\bar{X}$-morphism. Let $\alpha \in \SAut_G(\bar{Y}/X)$, and let $\beta \in \SAut_H(\bar{Z}/X)$ be a $q(\alpha)$-semilinear automorphism fitting in a commutative diagram
		\begin{equation*}
			\xymatrix{
				\bar{Y} \ar[r]^{\alpha} \ar[d]^{\iota} & \bar{Y} \ar[d]^{\iota} \\
				\bar{Z} \ar[r]^{\beta} & \bar{Z}.
			}
		\end{equation*}
		Then $\alpha \cdot \iota^\ast a = \iota^\ast(\beta \cdot a)$ for all $a \in \bar{k}[\bar{Z}]_{\bar{H}}^\times$.
	\end{enumerate}
\end{lemm}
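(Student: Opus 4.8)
The plan is to establish all four parts by unwinding definitions; the only computation of substance is in \ref{lemm:ActionOfSAut1}, and it rests on one preliminary identity. For $\alpha\in\SAut_G(\bar{Y}/X)$ put $s:=q(\alpha)$. From the defining property $\alpha\circ\rho_g=\rho_{\tensor[^{s}]{g}{}}\circ\alpha$ of $\SAut_G(\bar{Y}/X)$ I would first deduce, by conjugating with $\alpha$ and replacing $g$ by $\tensor[^{s^{-1}}]{g}{}$, the identity $\alpha^{-1}\circ\rho_g=\rho_{\tensor[^{s^{-1}}]{g}{}}\circ\alpha^{-1}$; and from the fact that $\alpha$ is an $s$-semilinear $X$-automorphism, i.e.\ $f\circ\alpha=s_{\#}\circ f$, the identity $f\circ\alpha^{-1}=s_{\#}^{-1}\circ f$. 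These, together with $a\circ\rho_h=\chi_a(h)\cdot a$ for $a\in\bar{k}[\bar{Y}]^\times_{\bar{G}}$ and the description of the $\Gamma_k$-action on $\wh{G}$, are all that the proof uses.

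For \ref{lemm:ActionOfSAut1}: one checks first that $\alpha\cdot a=s_{\#}\circ a\circ\alpha^{-1}$ is a $\bar{k}$-morphism $\bar{Y}\to\Gbb_{m,\bar{k}}$ (the two semilinear twists cancel on structure morphisms), so $\alpha\cdot a\in\bar{k}[\bar{Y}]^\times$. Then the preliminary identities give
\[
(\alpha\cdot a)\circ\rho_g=s_{\#}\circ a\circ\rho_{\tensor[^{s^{-1}}]{g}{}}\circ\alpha^{-1}=s_{\#}\circ\bigl(\chi_a(\tensor[^{s^{-1}}]{g}{})\cdot a\bigr)\circ\alpha^{-1}=(\tensor[^s]{\chi}{_a})(g)\cdot(\alpha\cdot a),
\]
using the definition of the $\Gamma_k$-action on $\wh{G}$ for the last step (the scalar $\chi_a(\tensor[^{s^{-1}}]{g}{})\in\bar{k}^\times$ is moved past $s_{\#}$). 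This shows simultaneously that $\alpha\cdot a\in\bar{k}[\bar{Y}]^\times_{\bar{G}}$, so the action is well defined, and that $\chi_{\alpha\cdot a}=\tensor[^{q(\alpha)}]{\chi}{_a}$, which is precisely the claimed $q$-equivariance of $a\mapsto\chi_a$. That \eqref{eq:ActionOfSAut} is a group action is immediate since $q$ and $t\mapsto t_{\#}$ are homomorphisms.

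For \ref{lemm:ActionOfSAut2}, the inclusion $G(\bar{X})\hookrightarrow\SAut_G(\bar{Y}/X)$ is $\sigma\mapsto\rho_\sigma^{-1}$, which has trivial image in $\Gamma_k$; since $f\circ\rho_\sigma=f$ one gets $\rho_\sigma^{-1}\cdot f^\ast b=(f^\ast b)\circ\rho_\sigma=f^\ast b$, so $\Ker q=G(\bar{X})$ acts trivially on $f^\ast\bar{k}[X]^\times$ and the action descends to $\Gamma_k=\Coker q$; for general $\alpha$, the identity $f\circ\alpha^{-1}=s_{\#}^{-1}\circ f$ turns $\alpha\cdot f^\ast b$ into $s_{\#}\circ b\circ s_{\#}^{-1}\circ f=f^\ast(\tensor[^s]{b}{})$, which by \eqref{eq:GaloisActionOnSections} is the natural Galois action. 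Parts \ref{lemm:ActionOfSAut3} and \ref{lemm:ActionOfSAut4} are purely formal: in \ref{lemm:ActionOfSAut3}, writing $\psi\colon\bar{Y}'\to\bar{Y}$ for the projection, one uses that $(\phi^\ast\alpha)^{-1}=\phi^\ast(\alpha^{-1})$ is $\alpha^{-1}$ fibred over $\bar{X}$ with the Galois action of $s^{-1}$ on $\bar{X}'$, so that $\psi\circ(\phi^\ast\alpha)^{-1}=\alpha^{-1}\circ\psi$ and hence $(\phi^\ast\alpha)\cdot(\psi^\ast a)=s_{\#}\circ a\circ\alpha^{-1}\circ\psi=\psi^\ast(\alpha\cdot a)$; in \ref{lemm:ActionOfSAut4}, $\varphi$-equivariance of $\iota$ first gives $\iota^\ast a\in\bar{k}[\bar{Y}]^\times_{\bar{G}}$ (with character $\chi_a\circ\varphi$), and then $\iota\circ\alpha^{-1}=\beta^{-1}\circ\iota$ together with $q(\beta)=q(\alpha)$ yields $\alpha\cdot(\iota^\ast a)=s_{\#}\circ a\circ\beta^{-1}\circ\iota=(\beta\cdot a)\circ\iota=\iota^\ast(\beta\cdot a)$.

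I expect no serious obstacle; the only point requiring care is \ref{lemm:ActionOfSAut1}, where the Galois twists must be tracked so that the character of $\alpha\cdot a$ comes out as $\tensor[^{q(\alpha)}]{\chi}{_a}$ (and not, say, $\tensor[^{q(\alpha)^{-1}}]{\chi}{_a}$). Everything else reduces to routine bookkeeping with the commutation relations recorded above.
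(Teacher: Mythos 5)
Your proposal is correct and follows essentially the same route as the paper: part (i) is verified by the same computation of $\bigl((\alpha\cdot a)\circ\rho_g\bigr)/(\alpha\cdot a)$ using $\alpha^{-1}\circ\rho_g=\rho_{\tensor[^{s^{-1}}]{g}{}}\circ\alpha^{-1}$ and the definition of the Galois action on $\wh{G}$, part (ii) by the identity $\alpha\cdot f^\ast b=f^\ast(\tensor[^{q(\alpha)}]{b}{})$, and parts (iii) and (iv) by the same one-line commutation arguments with the projection and with $\iota$. The small extra checks you include (that $\alpha\cdot a$ is again a $\bar{k}$-morphism, that the formula defines a group action, and the character of $\iota^\ast a$) are harmless elaborations of what the paper leaves implicit.
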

\begin{proof}
    \begin{enumerate}
	\item Let $a \in \bar{k}[\bar{Y}]_{\bar{G}}^\ast$, $\alpha \in \SAut_G(\bar{Y}/X)$, and $s:=q(\alpha)$. For any $g \in G(\bar{k})$, one has
		\begin{equation*}
			\frac{(\alpha \cdot a) \circ \rho_g}{\alpha \cdot a} = \frac{s_{\#} \circ a \circ \alpha^{-1} \circ \rho_g}{s_{\#} \circ a \circ \alpha^{-1}} = \frac{s_{\#} \circ a \circ \rho_{\tensor[^{s^{-1}}]{g}{}} \circ \alpha^{-1}}{s_{\#} \circ a \circ \alpha^{-1}} = s_{\#} \circ \chi_a(\tensor[^{s^{-1}}]{g}{}) = \tensor[^s]{\chi}{_a}(\tensor[^{s^{-1}}]{g}{}) = (\tensor[^s]{\chi}{_a})(g).
		\end{equation*}
		This means $\alpha \cdot a \in \bar{k}[\bar{Y}]_{\bar{G}}^\ast$ and $\chi_{\alpha \cdot a} = \tensor[^s]{\chi}{_a}$, which is what needed to be proved.
		
	\item Recall that the inclusion $G(\bar{X}) = \Mor_{\bar{k}}(\bar{X},\bar{G}) \hookrightarrow \SAut_G(\bar{Y}/X))$ is given by $\sigma \mapsto \rho_\sigma^{-1}$, where $\rho_\sigma: \bar{Y} \to \bar{Y}$ is the $\bar{X}$-automorphism $y \mapsto y \cdot \sigma(f(y))$. For $b \in \bar{k}[X]^\times$ and $\alpha \in \SAut_G(\bar{Y}/X)$, we have
		\begin{equation*}
			\alpha \cdot (f^\ast b) = q(\alpha)_{\#} \circ b \circ f \circ \alpha^{-1} = q(\alpha)_{\#} \circ b \circ q(\alpha)^{-1}_{\#} \circ f = \tensor[^{q(\alpha)}]{b}{} \circ f = f^\ast(\tensor[^{q(\alpha)}]{b}{}),
		\end{equation*}
		which is what claimed.
		
	\item Let $\pi: \bar{Y}' \to \bar{Y}$ be the canonical projection. For each $\alpha \in \SAut_G(\bar{Y}/X)$, we have $\alpha \circ \pi = \pi \circ \phi^\ast \alpha$ by the construction of $\phi^\ast \alpha$. This implies
		\begin{equation*}
			\phi^\ast \alpha \cdot \pi^\ast a =  q(\alpha)_{\#} \circ a \circ \pi \circ (\phi^\ast \alpha)^{-1} = q(\alpha)_{\#} \circ a \circ \alpha^{-1} \circ \pi = (\alpha \cdot a) \circ \pi = \pi^\ast (\alpha \cdot a)
		\end{equation*}
		for all $a \in \bar{k}[\bar{Y}]_{\bar{G}}^\ast$. The claim follows.
		
	\item We have $\alpha \cdot \iota^\ast a = q(\alpha)_{\#} \circ a \circ \iota \circ \alpha^{-1} = q(\alpha)_{\#} \circ a \circ \beta^{-1} \circ \iota = (\beta \cdot a) \circ \iota = \iota^{\ast}(\beta \cdot a)$.
    \end{enumerate}
\end{proof}

\begin{cons} \label{cons:Comparison}
    Let $f: \bar{Y} \to \bar{X}$ be a torsor under $\bar{G}$ such that $[\bar{Y}] \in \H^0(k,\H^1(\bar{X},\bar{G}))$, and let $E \subseteq \SAut_G(\bar{Y}/X)$ be a subgroup extension of $\Gamma_k$ by $G(\bar{k})$. Then we have an exact sequence 
	\begin{equation*}
		1 \to \bar{k}(X)^\times \xrightarrow{f^\ast} \bar{k}(\bar{Y})^\times_{\bar{G}} \xrightarrow{a \mapsto \chi_a} \wh{G} \to 0
	\end{equation*}
    of abelian groups ({\em cf.} \eqref{eq:Comparison6}). Equipped with the action \eqref{eq:ActionOfSAut}, the above sequence is an exact sequence of $E$-module (Lemma \ref{lemm:ActionOfSAut} \ref{lemm:ActionOfSAut1},\ref{lemm:ActionOfSAut2}, and \ref{lemm:ActionOfSAut3}). For $a \in \bar{k}(\bar{Y})^\times_{\bar{G}}$ and $g \in G(\bar{k})$, we have $\rho^{-1}_g \cdot a = a \circ \rho_g = a \chi_a(g)$, hence $[\rho_g^{-1} \cdot a] = [a]$ in $\bar{k}(\bar{Y})^\times_{\bar{G}}/\bar{k}^\times$. In otherwords, $G(\bar{k})$ acts trivially on the quotient $\bar{k}(\bar{Y})^\times_{\bar{G}}/\bar{k}^\times$. It follows that we have an exact sequence
	\begin{equation*}
		1 \to \bar{k}(X)^\times/\bar{k}^\times \xrightarrow{f^\ast} \bar{k}(\bar{Y})^\times_{\bar{G}}/\bar{k}^\times \xrightarrow{a \mapsto \chi_a} \wh{G} \to 0
	\end{equation*}
	of $\Gamma_k$-modules. By Lemma \ref{lemm:ActionOfSAut} \ref{lemm:ActionOfSAut4}, the isomorphism class of the $\Gamma_k$-module $\bar{k}(\bar{Y})^\times_{\bar{G}}/\bar{k}^\times$ only depends on the descent type $[\bar{Y},E] \in \DType(X,G)$, and not the pair $(\bar{Y},E)$ representing it.
	
	\begin{lemm} \label{lemm:ActionOfSAutOpenStabilizer}
		With the above notations, the following are true.
		\begin{enumerate}
			\item \label{lemm:ActionOfSAutOpenStabilizer1} If $[\bar{Y},E] = \dtype(Y)$ for some torsor $Y \to X$ under $G$, then the above action of $\Gamma_k$ on $\bar{k}(Y)^\times_{G}/\bar{k}^\times$ is the natural Galois action.
			
			\item \label{lemm:ActionOfSAutOpenStabilizer2} In general, $\bar{k}(\bar{Y})^\times_{\bar{G}}/\bar{k}^\times$ is a discrete $\Gamma_k$-module.
		\end{enumerate}
	\end{lemm}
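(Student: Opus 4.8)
The plan is to prove part \ref{lemm:ActionOfSAutOpenStabilizer1} directly from the definitions and then deduce part \ref{lemm:ActionOfSAutOpenStabilizer2} from it by passing to a large finite extension over which $[E]$ becomes neutral.

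For \ref{lemm:ActionOfSAutOpenStabilizer1}: when $[\bar{Y},E]=\dtype(Y)$, the extension $E=E_Y$ is by construction the semidirect product of $\bar{G}(\bar{k})$ and $\varsigma_Y(\Gamma_k)=\{s_\#:s\in\Gamma_k\}$, where $s\mapsto s_\#$ is the semilinear Galois action on $\bar{Y}$ attached to the $k$-form $Y$, and this is a continuous homomorphic section. Since $\bar{G}(\bar{k})$ acts trivially on $\bar{k}(\bar{Y})^\times_{\bar{G}}/\bar{k}^\times$ (Construction \ref{cons:Comparison}), the induced $\Gamma_k$-action on that quotient can be computed through this section: by \eqref{eq:ActionOfSAut}, $s$ acts on the class of $a$ by $[\,s_\#\circ a\circ s_\#^{-1}\,]$, where the left $s_\#$ acts on $\Gbb_{m,\bar{k}}$ and the right one on $\bar{Y}$. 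But $s_\#\circ a\circ s_\#^{-1}=\tensor[^s]{a}{}$ is precisely the natural Galois action on the function $a\in\bar{k}(Y)^\times_G$ — this is the very formula used at the start of \S\ref{subsec:ExtendedType} to verify $\chi_{\tensor[^s]{a}{}}=\tensor[^s]{\chi}{_a}$. Passing to the quotient gives the claim.

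For \ref{lemm:ActionOfSAutOpenStabilizer2}: by Lemma \ref{lemm:ActionOfSAut} \ref{lemm:ActionOfSAut4} the isomorphism class of the $\Gamma_k$-module $M:=\bar{k}(\bar{Y})^\times_{\bar{G}}/\bar{k}^\times$ depends only on $\lambda=[\bar{Y},E]\in\DType(X,G)$, so it suffices to show that each $[a]$, with $a\in\bar{k}(\bar{Y})^\times_{\bar{G}}$, has open stabilizer. The crucial input is that $[E]\in\H^2(k,G)$ becomes neutral after a finite extension: choosing a finite extension $K_0/k$ splitting $G$, the restriction $\mathrm{res}_{K_0/k}[E]$ is a finite family of classes in $\Br(K_0)$ (one for each $\Gbb_m$-factor of $G_{K_0}$, together with classes in $\H^2(K_0,\mu_{n_i})\hookrightarrow\Br(K_0)$ coming from $\H^1(K_0,\Gbb_m)=0$ and the Kummer sequence), and each of these is split by a finite extension; over a common finite Galois $K/k$ one gets $\mathrm{res}_{K/k}[E]=0$ in $\H^2(K,G)=\H^2(K,L_K)$. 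Now the restricted descent datum $(\bar{Y},E_K)$ with $E_K:=q^{-1}(\Gamma_K)\subseteq E$ represents $\mathrm{res}_{K/k}\lambda\in\DType(X_K,L_K)$ and has class $\mathrm{res}_{K/k}[E]=0$, which is neutral. By Proposition \ref{prop:DescentType} \ref{prop:DescentType1} applied over $K$ — using that $X$, hence $X_K$ and the torsor $\bar{Y}$, is quasi-projective, as it is in the situations where this lemma is used — there is a torsor $Y_1\to X_K$ under a $K$-form $G_1$ of $L_K$ with $\dtype(Y_1)=[\bar{Y},E_K]$. The $\Gamma_k$-action on $M$ restricts along $\Gamma_K\hookrightarrow\Gamma_k$ to the action induced by $E_K$, which is insensitive to the choice of section and therefore, by part \ref{lemm:ActionOfSAutOpenStabilizer1} over $K$ applied to $Y_1$, coincides with the natural Galois action of $\Gamma_K$ on $\bar{k}(\bar{Y})^\times_{\bar{G}}=\bar{k}(Y_1)^\times_{G_1}$. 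Since $a$ is a rational function it is defined over some finite extension $K'$ of $k$, hence fixed by $\Gamma_{K'}$ for the natural Galois action; enlarging $K'$ so that $K\subseteq K'$ we conclude that $\Gamma_{K'}$ stabilizes $[a]$, so $M$ is a discrete $\Gamma_k$-module.

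I expect the main obstacle to be the step asserting that $[E]$ restricts to the neutral class over a finite extension: everything else is formal once part \ref{lemm:ActionOfSAutOpenStabilizer1} is available, but this step genuinely uses the classical fact that Brauer classes are split by finite extensions, routed through a splitting field of $G$ and the Kummer sequence. A secondary point requiring care is the appeal to Proposition \ref{prop:DescentType} \ref{prop:DescentType1} over $K$, which needs quasi-projectivity of the base (for the Galois descent implicit in realizing $(\bar{Y},E_K)$ as the type of an honest torsor); one should check this hypothesis is in force wherever the lemma is invoked.
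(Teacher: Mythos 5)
Your part \ref{lemm:ActionOfSAutOpenStabilizer1} is exactly the paper's argument, and your strategy for \ref{lemm:ActionOfSAutOpenStabilizer2} --- kill $[E]$ over a finite extension $K$, realize the restricted descent datum as $\dtype$ of an honest torsor over $X_K$ via Proposition \ref{prop:DescentType} \ref{prop:DescentType1}, then apply part \ref{lemm:ActionOfSAutOpenStabilizer1} over $K$ --- is also the paper's. One side remark: the detour through a splitting field of $G$, the Kummer sequence and splitting fields of Brauer classes is correct but unnecessary; since $G(\bar{k})$ is a discrete $\Gamma_k$-module, one has $\H^2(k,G)=\varinjlim_K \H^2(\Gal(K/k),G(K))$, so every class is inflated from a finite level and therefore restricts to $0$ on $\Gamma_K$ for a suitable finite extension $K/k$, which is all the paper uses.

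The one genuine gap is the point you yourself flag and then defer: the quasi-projectivity needed to invoke Proposition \ref{prop:DescentType} \ref{prop:DescentType1} is \emph{not} among the standing hypotheses where this lemma is used. In \S \ref{sec:Comparison} the variety $X$ is only assumed smooth and geometrically integral, and the lemma is needed inside Construction \ref{cons:Comparison}, i.e.\ before quasi-projectivity ever enters (it is only assumed in the final sentence of Theorem \ref{thm:Comparison}); non-quasi-projective smooth varieties do exist, so your argument as written does not cover the general case. The missing step is a one-line localization: only rational functions are involved, so $\bar{k}(\bar{Y})^\times_{\bar{G}} = \bar{k}(\bar{Y}_{\bar{U}})^\times_{\bar{G}}$ for any non-empty open $U \subseteq X$, and one may therefore replace $X$ by an affine (hence quasi-projective, still smooth and geometrically integral) open subset before running your argument; this is exactly how the paper proceeds. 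With that reduction inserted, your proof is complete and coincides with the paper's.
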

	\begin{proof}
		\begin{enumerate}
			\item Let $s \in \Gamma_k$ and lift it to the semilinear automorphism $s_{\#}: \bar{Y} \to \bar{Y}$. Then $s_{\#} \cdot a = s_{\#} \circ a \circ s_{\#}^{-1} = \tensor[^s]{a}{}$ for all $a \in \bar{k}(Y)^\times_{G}$. The claim follows.
			
			\item Take any non-empty affine open subset $U \subseteq X$. Then $\bar{k}(\bar{Y})^\times_{\bar{G}} = \bar{k}(\bar{Y}_{\bar{U}})^\times_{\bar{G}}$, and thus we may assume that $X$ is affine. Let $K/k$ be a finite extension trivializing the class $[E] \in \H^2(k,G)$. By Proposition \ref{prop:DescentType} \ref{prop:DescentType1}, we have the restriction of $[\bar{Y},E]$ to $K$ is $\dtype(Y)$ for some torsor $Y \to X_K$ under $G_K$. It follows from \ref{lemm:ActionOfSAutOpenStabilizer1} that the stabilizer of any function $a \in \bar{k}(\bar{Y})^\times_{\bar{G}}$ is an open subgroup of $\Gamma_K$, hence also an open subgroup of $\Gamma_k$.
		\end{enumerate}
	\end{proof}
    Now, we define a Galois action on $\Div^{\ver,\bar{G}}(\bar{Y})$ using the isomorphism $f^\ast: \Div(\bar{X}) \xrightarrow{\cong} \Div^{\ver,\bar{G}}(\bar{Y})$. Then the map $\div: \bar{k}(\bar{Y})^\times_{\bar{G}}/\bar{k}^\times \to \Div^{\ver,\bar{G}}(\bar{Y})$ is $\Gamma_k$-equivariant. Indeed, let $s \in \Gamma_k$ and lift it to an $s$-semilinear automorphism $\alpha \in E$. Let $a \in \bar{k}(\bar{Y})^\times_{\bar{G}}$ and write $\div(a) = f^\ast D$ for some (unique) $D \in \Div(\bar{X})$. Since $f \circ \alpha^{-1} = s_{\#}^{-1} \circ f$, we have
	\begin{equation*}
		\div(\alpha \cdot a) = (\alpha^{-1})^\ast \div(a) = (\alpha^{-1})^\ast f^\ast D = f^\ast (s_{\#}^{-1})^\ast D = f^\ast (\tensor[^s]{D}{}).
	\end{equation*}
	To sum up, we have an exact sequence 
	\begin{equation} \label{eq:Comparison10}
		0 \to [\bar{k}(X)^\times/\bar{k}^\times \xrightarrow{\div} \Div(\bar{X})] \xrightarrow{f^\ast} [\bar{k}(\bar{Y})^\times_{\bar{G}}/\bar{k}^\times \xrightarrow{\div} \Div^{\ver,\bar{G}}(\bar{Y})] \to \wh{G} \to 0.
	\end{equation}
	of complexes of $\Gamma_k$-modules, similar to \eqref{eq:Comparison8}. Let $\lambda:=[\bar{Y},E] \in \DType(X,G)$. We define 
	\begin{equation*}
		\Phi(\lambda) \in \Ext^1_k(\wh{G}, [\bar{k}(X)^\times/\bar{k}^\times \xrightarrow{\div} \Div(\bar{X})]) = \Hom_k(\wh{G},\UPic(\bar{X}))
	\end{equation*}
	to be the inverse class of the above extension. We obtain a map
        \begin{equation*}
            \Phi: \DType(X,G) \to \Hom_k(\wh{G},\UPic(\bar{X})).
        \end{equation*}
    By virtue of Lemma \ref{lemm:ActionOfSAut} \ref{lemm:ActionOfSAut3} and \ref{lemm:ActionOfSAut4}, the map $-\Phi$ functorial in $X$ and $G$. 
\end{cons}

\begin{lemm} \label{lemm:ComparisonMapIsHomomorphism}
    The map $\Phi: \DType(X,G) \to \Hom_k(\wh{G},\UPic(\bar{X}))$ from Construction \ref{cons:Comparison} is a homomorphism.
\end{lemm}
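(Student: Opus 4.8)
The plan is to reduce the homomorphism property to the compatibility of $\Phi$ with the three operations out of which the group law on $\DType(X,G)$ is built in Definition~\ref{defn:SumOfDescentType}: pullback along a morphism of varieties, pushforward along a morphism of $k$-groups of multiplicative type, and external product. Recall that $\lambda+\lambda' = \Delta^\ast\nabla_\ast(\lambda\times\lambda')$, where $\nabla\colon G\times_k G\to G$ is multiplication and $\Delta\colon X\to X\times_k X$ the diagonal. Under Cartier duality $\nabla$ corresponds to the diagonal $\wh{\nabla}\colon\wh{G}\to\wh{G\times_k G}=\wh{G}\oplus\wh{G}$, and the Baer sum on $\Hom_k(\wh{G},\UPic(\bar{X}))=\Ext^1_k(\wh{G},\UPic(\bar{X}))$ is exactly $\Delta^\ast\circ\wh{\nabla}^\ast$ applied to the external sum of classes, where $\wh{\nabla}^\ast$ on $\Ext^1_k(\wh{G}\oplus\wh{G},-)=\Ext^1_k(\wh{G},-)^{\oplus2}$ is the sum map. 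So once the three compatibilities are in place, $\Phi(\lambda+\lambda')=\Phi(\lambda)+\Phi(\lambda')$ follows by a short diagram manipulation; the sign in the definition of $\Phi$ (it is the \emph{inverse} class of \eqref{eq:Comparison10}) is harmless because negation is additive.

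Compatibility with pullback and pushforward is essentially contained in Lemma~\ref{lemm:ActionOfSAut}. By part~\ref{lemm:ActionOfSAut3} the pullback of functions $\bar{k}[\bar{Y}]^\times_{\bar{G}}\to\bar{k}[\phi^\ast\bar{Y}]^\times_{\bar{G}}$ is equivariant for the relevant $\SAut_G$-actions, so the extension \eqref{eq:Comparison10} attached to $\phi^\ast(\bar{Y},E)$ is the $\phi$-pullback of the one attached to $(\bar{Y},E)$; hence $\Phi(\phi^\ast\lambda)=\phi^\ast\Phi(\lambda)$. For a morphism $\varphi\colon G\to H$ of $k$-groups of multiplicative type with $\bar{Z}=\varphi_\ast\bar{Y}$, the $\varphi$-equivariant projection $\bar{Y}\to\bar{Z}$ gives, via part~\ref{lemm:ActionOfSAut4}, a morphism of extensions from \eqref{eq:Comparison10}-for-$(\bar{Z},\varphi_\ast E)$ to \eqref{eq:Comparison10}-for-$(\bar{Y},E)$ lying over $\wh{\varphi}\colon\wh{H}\to\wh{G}$ and restricting to the identity on $\UPic(\bar{X})$; comparing with the fibre-product description of $\wh{\varphi}^\ast$ yields $\Phi(\varphi_\ast\lambda)=\wh{\varphi}^\ast\Phi(\lambda)$. (Here $\varphi_\ast$ is a genuine map, not merely a relation, because $H$ is commutative, by Lemma~\ref{lemm:PushforwardOfDescentType2}.)

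It remains to treat the external product. Writing $\lambda=[\bar{Y},E]$, $\lambda'=[\bar{Y}',E']$, we have $\lambda\times\lambda'=[\bar{Y}\times_{\bar{k}}\bar{Y}',E\boxtimes E']\in\DType(X\times_k X,G\times_k G)$ and $\wh{G\times_k G}=\wh{G}\oplus\wh{G}$. Since $\Ext^1_k(-,\UPic)$ turns direct sums into direct products, $\Phi(\lambda\times\lambda')$ is determined by its restrictions along the two inclusions $\wh{G}\hookrightarrow\wh{G}\oplus\wh{G}$; these are the Cartier duals of the projections $\pr_i\colon G\times_k G\to G$, so by the pushforward compatibility the $i$-th restriction is $\Phi\bigl((\pr_i)_\ast(\lambda\times\lambda')\bigr)$. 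Unwinding the constructions of external product and pushforward (alternatively, applying Proposition~\ref{prop:ExternalProduct}), $(\pr_1)_\ast(\lambda\times\lambda')$ is the pullback of $\lambda$ along the projection $X\times_k X\to X$, and likewise for $\pr_2$; hence by the pullback compatibility $\Phi(\lambda\times\lambda')$ corresponds to $\bigl(\pr_1^\ast\Phi(\lambda),\pr_2^\ast\Phi(\lambda')\bigr)$ in $\Hom_k(\wh{G},\UPic(\bar{X}\times_{\bar{k}}\bar{X}))^{\oplus2}$. Chaining the three facts: $\Phi(\lambda+\lambda')=\Delta^\ast\wh{\nabla}^\ast\Phi(\lambda\times\lambda')=\Delta^\ast\bigl(\pr_1^\ast\Phi(\lambda)+\pr_2^\ast\Phi(\lambda')\bigr)=\Phi(\lambda)+\Phi(\lambda')$, using $\pr_i\circ\Delta=\id_X$.

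The main obstacle is making the pushforward compatibility genuinely precise: one must check that the conjugation action of the relevant subgroup of $\SAut_G$ descends to \emph{exactly} the $\Gamma_k$-module structures used to define $\Phi$, and that the morphism of extensions produced by Lemma~\ref{lemm:ActionOfSAut}\,\ref{lemm:ActionOfSAut4} is precisely the one identifying the class of \eqref{eq:Comparison10}-for-$(\bar{Z},\varphi_\ast E)$ with $\wh{\varphi}^\ast$ of that for $(\bar{Y},E)$; and, for the external product, one must verify $(\pr_i)_\ast(\lambda\times\lambda')=\pr_i^\ast\lambda^{(i)}$ at the level of the representing pairs, not merely of isomorphism classes. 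These steps are bookkeeping but need care. A more computational alternative bypasses external products altogether, exhibiting $\lambda+\lambda'$ by an explicit representative $(\bar{Z},F)$ with $\bar{Z}$ the contracted product $\bar{Y}\wedge^{\bar{G}}\bar{Y}'$, and identifying \eqref{eq:Comparison10}-for-$(\bar{Z},F)$ with the Baer sum directly by decomposing $\bar{k}(\bar{Z})^\times_{\bar{G}}$ through the relative Rosenlicht sequence \eqref{eq:Comparison6} applied to the $(\bar{G}\times_{\bar{k}}\bar{G})$-torsor $\bar{Y}\times_{\bar{X}}\bar{Y}'\to\bar{X}$; there the delicate point is the anti-diagonal copy of $\bar{k}(X)^\times/\bar{k}^\times$ by which one must quotient.
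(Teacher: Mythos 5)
Your proposal is correct, and it shares the global skeleton of the paper's proof (reduce, via the functoriality of $\Phi$ in $X$ and $G$ established at the end of Construction \ref{cons:Comparison}, to a statement about $\Phi(\lambda_1\times\lambda_2)$, then conclude by the Baer-sum description and $\pr_i\circ\Delta=\id_X$); where you genuinely diverge is in the key step. The paper proves the external-product compatibility by hand: it exhibits an explicit morphism of four-term exact sequences of complexes from the direct sum of the extensions \eqref{eq:Comparison10} for $(\bar{Y}_i,E_i)$ to the one for $(\bar{Y}_1\times_{\bar{k}}\bar{Y}_2,E_1\boxtimes E_2)$ via $\pi_1^\ast+\pi_2^\ast$, checking the character identity $\chi_{(\pi_1^\ast a_1)(\pi_2^\ast a_2)}=(\chi_{a_1},\chi_{a_2})$ and the $\Gamma_k$-equivariance of $\pi_1^\ast+\pi_2^\ast$ against the action \eqref{eq:ActionOfSAut} determined by $E_1\boxtimes E_2$. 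You instead deduce the same fact formally: since $\Hom_k(\wh{G}\oplus\wh{G},\UPic(\bar{X}\times_{\bar{k}}\bar{X}))$ is determined by restriction along the two inclusions, you reduce to the identity $(\pr_i)_\ast(\lambda_1\times\lambda_2)=q_i^\ast\lambda_i$ (which indeed follows by unwinding, or from Proposition \ref{prop:ExternalProduct}, Lemma \ref{lemm:ExternalProduct} \ref{lemm:ExternalProduct2} and \eqref{eq:SumOfDescentType5}) together with pushforward and pullback compatibility of $\Phi$. Your route buys a cleaner, computation-free argument, but it leans more heavily on the pushforward functoriality $\Phi(\varphi_\ast\lambda)=\wh{\varphi}^\ast\Phi(\lambda)$, which the paper only asserts tersely via Lemma \ref{lemm:ActionOfSAut} \ref{lemm:ActionOfSAut3} and \ref{lemm:ActionOfSAut4}; if one insists on verifying that functoriality in full detail, the work saved is partly displaced rather than eliminated, whereas the paper's explicit diagram simultaneously documents exactly how the Galois module structures on $\bar{k}(\bar{Y})^\times_{\bar{G}}/\bar{k}^\times$ match up. Both arguments are sound; you correctly note (and it is worth keeping) that the overall sign coming from ``inverse class'' in Construction \ref{cons:Comparison} is immaterial, and your flagged bookkeeping points are precisely the ones that need, and admit, routine verification.
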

\begin{proof}
    Let $\lambda_1 = [\bar{Y}_1,E_1]$ and  $\lambda_2 = [\bar{Y}_2,E_2]$ be descent types on $X$ bound by $\lien(G)$. We consider their external product $\lambda_1 \times \lambda_2 = [\bar{Y}, E_1 \boxtimes E_2] \in \DType(X \times_k X, G \times_k G)$, where $\bar{Y}:=\bar{Y}_1 \times_{\bar{k}} \bar{Y}_2$ ({\em cf.} Definition \ref{defn:ExternalProduct}). Recall that the sum $\lambda_1 + \lambda_2 \in \DType(X,G)$ is defined to be $\nabla_\ast \Delta^\ast(\lambda_1 \times \lambda_2)$, where $\nabla: G \times_k G \to G$ and $\Delta: X \to X \times_k X$ are the multiplication and diagonal morphisms, respectively (see Definition \ref{defn:SumOfDescentType}).
	
    For $i=1,2$, denote by $f_i: \bar{Y}_i \to \bar{X}$ the structure morphism, and by $\pi_i: \bar{Y} \to \bar{Y}_i$ and $p_i: \bar{X} \times_{\bar{k}} \bar{X} \to \bar{X}$ the projections. Let $\varsigma = p_1^\ast + p_2^\ast: \UPic(\bar{X}) \oplus \UPic(\bar{X}) \to \UPic(\bar{X}  \times_{\bar{k}} \bar{X})$ and $\delta = \Delta^\ast: \UPic(\bar{X} \times_{\bar{k}} \bar{X}) \to \UPic(\bar{X})$, then $\delta \circ \varsigma$ is the sum map on $\UPic(\bar{X})$. On the other hand, the Cartier dual $\wh{\nabla}: \wh{G} \to \wh{G} \oplus \wh{G}$ of $\nabla$ is the diagonal map. It follows from the definition of the addition on $\Hom_k(\wh{G},\UPic(\bar{X}))$ (the ``Baer sum'') that 
	\begin{equation*}
		-\Phi(\lambda_1) - \Phi(\lambda_2) = \wh{\nabla}^\ast \delta_\ast \varsigma_\ast((-\Phi(\lambda_1)) \oplus (-\Phi(\lambda_2))).
	\end{equation*}
    Since the map $\Phi$ is functorial in $X$ and $G$, we have
	\begin{equation*}
		-\Phi(\lambda_1+\lambda_2) = -\Phi(\nabla_\ast \Delta^\ast(\lambda_1 \times \lambda_2)) = \wh{\nabla}^\ast(-\Phi(\Delta^\ast(\lambda_1 \times \lambda_2))) = \wh{\nabla}^\ast \delta_\ast (-\Phi(\lambda_1 \times \lambda_2)).
	\end{equation*}
    Thus, it would be enough to prove that 
        \begin{equation*}
            \varsigma_\ast((-\Phi(\lambda_1)) \oplus (-\Phi(\lambda_2))) = -\Phi(\lambda_1 \times \lambda_2) \in \Hom_k(\wh{G},\UPic(\bar{X} \times_{\bar{k}} \bar{X})).
        \end{equation*}
    To this end, we are to establish a commutative diagram 
	\begin{equation*}
		\xymatrix@C-5pt{
			[\bar{k}(X)^\times / \bar{k}^\times \xrightarrow{\div} \Div(\bar{X})]^{\oplus 2} \ar[rr]^-{f_1^\ast \oplus f_2^\ast} \ar[d]^{\varsigma} && \bigoplus\limits_{i=1}^2 [\bar{k}(\bar{Y}_i)^\times_{\bar{G}} / \bar{k}^\times \xrightarrow{\div} \Div^{\ver,\bar{G}}(\bar{Y}_i)]^{\oplus 2} \ar[r] \ar[d]^{\pi_1^\ast + \pi_2^\ast} & \wh{G} \oplus \wh{G} \ar@{=}[d] \\
			[\bar{k}(X \times_k X)^\times / \bar{k}^\times \xrightarrow{\div} \Div(\bar{X} \times_{\bar{k}} \bar{X})] \ar[rr]^{(f_1 \times_{\bar{k}} f_2)^\ast} && [\bar{k}(\bar{Y})^\times_{\bar{G} \times_{\bar{k}} \bar{G}} / \bar{k}^\times \xrightarrow{\div} \Div^{\ver,\bar{G} \times_{\bar{k}} \bar{G}}(\bar{Y})] \ar[r] & \wh{G \times_k G}
		}
	\end{equation*}
    of complexes of $\Gamma_k$-modules. The left square commutes because we have a commutative diagram
	\begin{equation*}
		\xymatrix{
			\bar{Y}_1 \ar[d]^{f_1} & \bar{Y} \ar[d]^{f_1 \times_{\bar{k}} f_2} \ar[r]^-{\pi_2} \ar[l]_-{\pi_1} & \bar{Y}_2 \ar[d]^{f_2}\\
			\bar{X} & \bar{X} \times_{\bar{k}} \bar{X} \ar[r]^-{p_2} \ar[l]_-{p_1} & \bar{X}.
		}
	\end{equation*}
    To see that the right square also commutes, take $a_i \in \bar{k}(\bar{Y}_i)^\times_{\bar{G}}$ for $i=1,2$, and let $\chi_i := \chi_{a_i} \in \wh{G}$. We have
	\begin{align*}
		\frac{((\pi_1^\ast a_1)(\pi_2^\ast a_2)) \circ \rho_{(g_1,g_2)}}{(\pi_1^\ast a_1)(\pi_2^\ast a_2)} & = \left(\frac{a_1 \circ \pi_1 \circ \rho_{(g_1,g_2)}}{a_1 \circ \pi_1}\right)\left(\frac{a_2 \circ \pi_2 \circ \rho_{(g_1,g_2)}}{a_2 \circ \pi_2}\right) \\
        & = \left(\frac{a_1 \circ \rho_{g_1} \circ \pi_1}{a_1 \circ \pi_1}\right) \left(\frac{a_2 \circ \rho_{g_2} \circ \pi_2}{a_2 \circ \pi_2}\right) \\
        & = \chi_1(g_1)\chi_2(g_2)
	\end{align*}
    for all $g_1,g_2 \in G(\bar{k})$. This means $\chi_{(\pi_1^\ast a_1)(\pi_2^\ast a_2)} = (\chi_1,\chi_2)$ under the identification $\wh{G\times_k G} = \wh{G} \oplus \wh{G}$.
	
    It remains to check that the map $\pi_1^\ast + \pi_2^\ast$ is a morphism of complexes of {\em $\Gamma_k$-modules}. On the level of vertical invariant divisors, this is obvious thanks to the definition of the Galois action on the divisor groups. On the level of rational functions, take $s \in \Gamma_k$ and $a_i \in \bar{k}(\bar{Y}_i)^\times_{\bar{G}}$, and let $\alpha_i \in E_i$ be a lifting of $s$ for $i=1,2$. The element $\alpha_1 \times_{\bar{k}} \alpha_2 \in E_1 \boxtimes E_2$ is a lifting of $s$, and we have
	\begin{align*}
		(\alpha_1 \times_{\bar{k}} \alpha_2) \cdot ((\pi_1^\ast a_1)(\pi_2^\ast a_2)) & = s_{\#} \circ ((\pi_1^\ast a_1)(\pi_2^\ast a_2)) \circ \alpha_1^{-1} \times_{\bar{k}} \alpha_2^{-1} \\
		& = (s_{\#} \circ (\pi_1^\ast a_1) \circ (\alpha_1^{-1} \times_{\bar{k}} \alpha_2^{-1}))(s_{\#} \circ (\pi_2^\ast a_2) \circ (\alpha_1^{-1} \times_{\bar{k}} \alpha_2^{-1})) \\
		& = (s_{\#} \circ a_1 \circ \pi_1 \circ (\alpha_1^{-1} \times_{\bar{k}} \alpha_2^{-1}))(s_{\#} \circ a_2 \circ \pi_2 \circ (\alpha_1^{-1} \times_{\bar{k}} \alpha_2^{-1})) \\
		& = (s_{\#} \circ a_1 \circ \alpha_1^{-1} \circ \pi_1)(s_{\#} \circ a_2 \circ \alpha_2^{-1} \circ \pi_2) \\
		& = ((\alpha_1 \cdot a_1) \circ \pi_1)((\alpha_2 \cdot a_2) \circ \pi_2) \\
		& = (\pi_1^\ast (\alpha_1 \cdot a_1))(\pi_2^\ast (\alpha_2 \cdot a_2))
	\end{align*}
	in $\bar{k}(\bar{Y})^\times_{\bar{G}}$, or $\tensor[^s]{}{}[(\pi_1^\ast a_1)(\pi_2^\ast a_2)] = (\pi^\ast_1(\tensor[^s]{[a]}{_1}))(\pi^\ast_2(\tensor[^s]{[a]}{_2}))$ in $\bar{k}(\bar{Y})^\times_{\bar{G}}/\bar{k}^\times$, which is what needed to be proved.
\end{proof}

\subsection{Comparing two fundamental exact sequences} \label{subsec:FundamentalComparison}

We prove Theorem \ref{thm:Comparison} in this paragraph. By Proposition \ref{prop:Comparison1} and Lemma \ref{lemm:ActionOfSAutOpenStabilizer} \ref{lemm:ActionOfSAutOpenStabilizer1}, the diagram
\begin{equation*}
	\xymatrix{
		\H^1(X,G) \ar[r]^-{\dtype} \ar@{=}[d] & \DType(X,G) \ar[d]^{\Phi} \\
		\H^1(X,G) \ar[r]^-{\xtype} & \Hom_k(\wh{G},\UPic(\bar{X}))
	}
\end{equation*}
commutes. Once the commutative diagram \eqref{eq:DiagramComparison} is established, its top row is exact if $X$ is quasi-projective, by Theorem \ref{thm:Abelian}. In this case, it follows that $\Phi$ is an isomorphism by the five lemma.

Thus, it remains to establish commutativity up to a sign of the diagram
\begin{equation*}
	\xymatrix{
		\DType(X,G) \ar[r] \ar[d]^{\Phi} & \H^2(k,G) \ar@{=}[d] \\
		\Hom_k(\wh{G},\UPic(\bar{X})) \ar[r]^-{\partial} & \H^2(k,G).
	}
\end{equation*}
Let $\lambda:=[\bar{Y},E]$ be a descent type on $X$ bound by $\lien(G)$. We proceed to show that $\partial(\Phi(\lambda)) = -[E]$. The first reduction step is a localization. By functoriality, we may replace $X$ by any non-empty open subset (this does not affect the class $[E] \in \H^2(k,G)$, {\em cf.} Construction \ref{cons:PullbackOfDescentType}). Since $\Hscr^{-1}(\UPic(\bar{X})) = \Unit(\bar{X})$ and $\Hscr^0(\UPic(\bar{X})) = \Pic(\bar{X})$, we have a distinguished triangle
\begin{equation*}
	\Unit(\bar{X})[1] \to \UPic(\bar{X}) \to \Pic(\bar{X}) \to \Unit(\bar{X})[2]
\end{equation*}
in the category $\Dcal^+(X)$. Applying $\Hom_k(\wh{G},-)$ to it yields an exact sequence
\begin{equation} \label{eq:Comparison11}
	0 \to \Ext_k^1(\wh{G},\Unit(\bar{X})) \to \Hom_k(\wh{G},\UPic(\bar{X})) \to \Hom_k(\wh{G},\Pic(\bar{X}))
\end{equation}
Let us consider the image $\lambda'$ of $\Phi(\lambda)$ in $\Hom_k(\wh{G},\Pic(\bar{X}))$. Since $\wh{G}$ is finitely generated, for some sufficiently small non-empty open subset $U \subseteq X$, the composite $\wh{G} \xrightarrow{\lambda'} \Pic(\bar{X}) \to \Pic(\bar{U})$ is $0$. Replacing $X$ by $U$, we may and shall from now on assume that $\lambda' = 0$. This means we have a short exact sequence
\begin{equation} \label{eq:Comparison12}
	0 \to \Unit(\bar{X}) \to \bar{k}[\bar{Y}]^\times_{\bar{G}}/\bar{k}^\times \to \wh{G} \to 0
\end{equation}
by applying $\Hscr^0$ to \eqref{eq:Comparison10}. Let $\varepsilon \in \Ext^1_k(\wh{G},\Unit(\bar{X}))$ denote its class. It is clear that we have a morphism from \eqref{eq:Comparison12} to \eqref{eq:Comparison10}, hence the map $\Ext_k^1(\wh{G},\Unit(\bar{X})) \to \Hom_k(\wh{G},\UPic(\bar{X}))$ from \eqref{eq:Comparison11} takes $\varepsilon$ to $-\Phi(\lambda)$.

Since $\Unit(\bar{X})$ is a finitely generated free abelian group (see the beginning of this \S), the group $\bar{k}[\bar{Y}]^\times_{\bar{G}}/\bar{k}^\times$ is also finitely generated. Let $T$ (resp. $M$) be the $k$-torus (resp. $k$-group of multiplicative type) with character module $\wh{T} = \Unit(\bar{X})$ (resp. $\wh{M} = \bar{k}[\bar{Y}]^\times_{\bar{G}}/\bar{k}^\times$). We have a commutative diagram with solid arrows
\begin{equation} \label{eq:TMG}
	\xymatrix{
		&& 1 \ar[d] & 0 \ar[d] \\
		1 \ar[r] & \bar{k}^\times \ar@{=}[d] \ar[r] & \bar{k}[X]^{\times} \ar[r] \ar[d]^{f^\ast} & \wh{T} \ar[r] \ar[d] \ar@{.>}@/_1pc/[l]_-{\varsigma_2} & 0 \\
		1 \ar[r] & \bar{k}^\times \ar[r] & \bar{k}[\bar{Y}]_{\bar{G}}^{\times} \ar[r] \ar[d] & \wh{M} \ar[r] \ar[d] \ar@{.>}@/_1pc/[l]_-{\varsigma_1} & 0 \\
		&& \wh{G} \ar[d] \ar@{=}[r] & \wh{G} \ar[d]\\
		&& 0 & 0
	}
\end{equation}
with exact rows and columns, in the category of {\em abelian groups}. Its top row and right column are exact sequences of $\Gamma_k$-modules. Let $\tau \in \Z^1(k,T)$ be a cocycle such that $-[\tau] \in \H^1(k,T)$ corresponds to the class of the top row, under the identification $\H^1(k,T) \cong \Ext^1_k(\wh{T}, \Gbb_m)$ \cite[Proposition 1.4.1]{CTS1987Descente}.

To describe $\partial(\Phi(\lambda))$, we recall how the map $\partial: \Hom_k(\wh{G},\UPic(\bar{X})) \to \H^2(k,G)$ was defined. According to the proof of \cite[Proposition 8.1]{HS2013Descent}, the bottom row of \eqref{eq:DiagramComparison} is obtained by applying $\Hom_k(\wh{G},-)$ to the triangle
\begin{equation*}
	\Gbb_m \to \tau_{\le 1} \Rbb p_\ast \Gbb_{m,X} \to \UPic(\bar{X})[-1] \to \Gbb_m[1],
\end{equation*}
or equivalently, the extension of complexes
\begin{equation} \label{eq:Comparison13}
	0 \to \bar{k}^\times \to [\bar{k}(X)^\times \xrightarrow{\div} \Div(\bar{X})] \to [\bar{k}(X)^\times/\bar{k}^\times \xrightarrow{\div} \Div(\bar{X})] \to 0.
\end{equation}
Thus, under identification $\H^2(k,G) \cong \Ext^2_k(\wh{G}, \Gbb_m)$ \cite[Proposition 1.4.1]{CTS1987Descente}, the map $\partial$ is given by the Yoneda product with the class of \eqref{eq:Comparison13}. There is a commutative diagram of Yoneda pairings
\begin{equation*}
	\xymatrix@C-2pc{
		\Ext^1_k(\wh{G},\wh{T}) \ar[d] & \times & \Ext^1_k(\wh{T},\Gbb_m) \ar[rrrrrrrr]^-{\sqdot} &&&&&&&&  \Ext^2_k(\wh{G},\Gbb_m) \ar@{=}[d] \\
		\Hom_k(\wh{G},\UPic(\bar{X})) & \times & \Ext^2_k(\UPic(\bar{X}),\Gbb_m) \ar[rrrrrrrr]^-{\sqdot} \ar[u] &&&&&&&& \Ext^2_k(\wh{G},\Gbb_m)  
	}
\end{equation*}  
Viewing that there is an obvious inclusion from the top row of \eqref{eq:TMG} to \eqref{eq:Comparison13}, the above diagram gives us $\partial(\Phi(\lambda)) =  \varepsilon \sqdot [\tau]$. Alternatively, we have $\partial(\Phi(\lambda)) = \delta([\tau])$, where $\delta: \H^1(k,T) \to \H^2(k,G)$ is the connecting homomorphism issued from the exact sequence of $k$-groups of multiplicative type
\begin{equation} \label{eq:GMT}
	1 \to G \to M \to T	\to 1
\end{equation}
dual to the right column of \eqref{eq:TMG}. The task now is to show that $\delta([\tau]) = -[E]$.

Let us now take the dotted arrows in diagram \eqref{eq:TMG} into consideration. Since $\bar{k}^\times$ is a divisible abelian group, the bottom row has a section $\varsigma_1: \wh{M} \to \bar{k}[\bar{Y}]^\times_{\bar{G}}$. By exactness of the middle column, the composite $\wh{T} \to \wh{M} \xrightarrow{\varsigma_1} \bar{k}[\bar{Y}]^\times_{\bar{G}}$ factors through a section $\varsigma_2: \wh{T} \to \bar{k}[X]^\times$ (in the category of abelian groups). Now, the section $\varsigma_1$ (resp. $\varsigma_2$) yields a homomorphism $\pi^\ast: \bar{k}[M] \to \bar{k}[\bar{Y}]$ (resp. $\varpi^\ast: \bar{k}[T] \to \bar{k}[X]$) of $\bar{k}$-algebras (see {\em cf.} paragraph \ref{subsec:Notation}). Since $\bar{M}$ and $\bar{T}$ are affine, we obtain $\bar{k}$-morphisms $\pi: \bar{Y} \to \bar{M}$ and $\varpi: \bar{X} \to \bar{T}$ fitting in a commutative diagram
\begin{equation} \label{eq:YMXT}
	\xymatrix{
		\bar{Y} \ar[r]^{\pi} \ar[d]^{f} & \bar{M} \ar[d] \\
		\bar{X} \ar[r]^{\varpi} & \bar{T},}
\end{equation}
where both vertical arrows are torsors under $\bar{G}$, and $\pi$ is $\bar{G}$-equivariant. 

One should keep in mind that $\pi$ induces a homomorphism (also denoted by) 
\begin{equation*}
	\pi^\ast: \bar{k}[M]^\times_{M} \to \bar{k}[\bar{Y}]^\times_{\bar{G}}
\end{equation*}
of abelian groups, which fits in a commutative diagram with exact rows
\begin{equation} \label{eq:DefinitionOfM}
	\xymatrix{
		1 \ar[r] & \bar{k}^\times \ar[r] \ar@{=}[d] & \bar{k}[M]^\times_M \ar[r]^-{b \mapsto \chi_b} \ar[d]^{\pi^\ast} & \wh{M} \ar@{=}[d] \ar[r] & 0 \\
		1 \ar[r] & \bar{k}^\times \ar[r] & \bar{k}[\bar{Y}]^\times_{\bar{G}} \ar[r]^-{a \mapsto [a]} & \bar{k}[\bar{Y}]^\times_{\bar{G}}/\bar{k}^\times \ar[r] & 0
	}
\end{equation}
(the top row is exact because it is \eqref{eq:Comparison4} applied to the trivial torsor $M \to \Spec(k)$). By the five lemma, $\pi^\ast$ is an isomorphism, which induces identity on the Galois module $\wh{M}$. Considering the action $\cdot$ of $\SAut_G(\bar{Y}/X)$ introduced in \eqref{eq:ActionOfSAut}, we have $\tensor[^s]{[a]}{} = [\alpha \cdot a] \in \bar{k}[\bar{Y}]^\times_{\bar{G}}/\bar{k}^\times = \wh{M}$ for any $s \in \Gamma_k$, any lifting $\alpha \in E$ of $s$, and any $a \in \bar{k}[\bar{Y}]^\times_{\bar{G}}$. Conversely, we shall need the following Lemma, whose proof is postponed to the end of this \S.

\begin{lemm} \label{lemm:CharacterizationofE}
	Let $s \in \Gamma_k$ and let $\alpha \in \SAut_G(\bar{Y}/X)$ be a lifting of $s$. Then $\alpha \in E$ if and only if $\tensor[^s]{[a]}{} = [\alpha \cdot a]$ in $\bar{k}[\bar{Y}]^\times_{\bar{G}}/\bar{k}^\times = \wh{M}$ for all $a \in \bar{k}[\bar{Y}]^\times_{\bar{G}}$.
\end{lemm}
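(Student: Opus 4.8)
The plan is to prove the two implications separately. The forward implication is immediate: if $\alpha \in E$, then by Construction~\ref{cons:Comparison} the $\Gamma_k$-module structure on $\wh{M} = \bar{k}[\bar{Y}]^\times_{\bar{G}}/\bar{k}^\times$ is, by definition, the one induced from the conjugation action \eqref{eq:ActionOfSAut} of the extension $E$ (this descends to the quotient precisely because $G(\bar{k})$ acts trivially there), so $\tensor[^s]{[a]}{} = [\alpha\cdot a]$ for all $a$; this is nothing but the remark recorded just before the Lemma.

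For the converse, suppose $\alpha \in \SAut_G(\bar{Y}/X)$ lifts $s$ and satisfies $[\alpha\cdot a] = \tensor[^s]{[a]}{}$ for all $a \in \bar{k}[\bar{Y}]^\times_{\bar{G}}$. First I would reduce the assertion to a statement about a single $\bar{k}$-morphism. Using surjectivity of $q\colon E \to \Gamma_k$, pick $\beta \in E$ with $q(\beta) = s$ and set $\gamma := \beta^{-1}\alpha$; then $q(\gamma) = 1$, so by exactness of \eqref{eq:H2Transgression} we have $\gamma = \rho_\sigma^{-1}$ for a unique $\sigma \in G(\bar{X}) = \Mor_{\bar{k}}(\bar{X},\bar{G})$. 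Since the elements of $E$ lying over $1 \in \Gamma_k$ are precisely $\{\rho_g^{-1} : g \in G(\bar{k})\}$, it suffices to show that $\sigma$ is constant, i.e.\ $\sigma \in G(\bar{k})$: then $\gamma \in E$ and hence $\alpha = \beta\gamma \in E$.

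The crux is the following computation. For $a \in \bar{k}[\bar{Y}]^\times_{\bar{G}}$, since $q(\gamma) = 1$ we have $\gamma\cdot a = a\circ\rho_\sigma$, and feeding the $\bar{Y}$-point $\sigma\circ f$ of $\bar{G}$ into the (scheme-theoretic) character identity $a\circ\rho_g = a\,\chi_a(g)$ defining $\bar{k}[\bar{Y}]^\times_{\bar{G}}$ gives $\gamma\cdot a = a\cdot f^\ast(\chi_a\circ\sigma)$ with $\chi_a\circ\sigma \in \bar{k}[X]^\times$. On the other hand $\alpha\cdot a = \beta\cdot(\gamma\cdot a)$, and both $a$ and $\gamma\cdot a$ lie in $\bar{k}[\bar{Y}]^\times_{\bar{G}}$ (Lemma~\ref{lemm:ActionOfSAut} \ref{lemm:ActionOfSAut1}), so applying the forward implication to $\beta$ yields $\tensor[^s]{[\gamma\cdot a]}{} = [\beta\cdot(\gamma\cdot a)] = [\alpha\cdot a] = \tensor[^s]{[a]}{}$, whence $[\gamma\cdot a] = [a]$. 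Therefore $f^\ast(\chi_a\circ\sigma) \in \bar{k}^\times$, and since $f^\ast$ is injective with $f^\ast\bar{k}^\times = \bar{k}^\times$ this forces $\chi_a\circ\sigma \in \bar{k}^\times$ for every $a \in \bar{k}[\bar{Y}]^\times_{\bar{G}}$. To conclude, I would invoke the standing reduction to the case $\lambda' = 0$, in which \eqref{eq:Comparison12} shows that $a \mapsto \chi_a\colon \bar{k}[\bar{Y}]^\times_{\bar{G}} \to \wh{G}$ is surjective; under the canonical identification $\Mor_{\bar{k}}(\bar{X},\bar{G}) = \Hom(\wh{G},\bar{k}[X]^\times)$, $\sigma \mapsto (\chi \mapsto \chi\circ\sigma)$, the previous sentence says that the homomorphism attached to $\sigma$ takes all of $\wh{G}$ into $\bar{k}^\times$, i.e.\ $\sigma$ factors through a $\bar{k}$-point of $\bar{G}$, i.e.\ $\sigma \in G(\bar{k})$.

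The step I expect to be the main obstacle is this central computation: making precise the passage from the pointwise relation $a\circ\rho_g = a\,\chi_a(g)$ to its ``$\bar{X}$-point'' form $\gamma\cdot a = a\cdot f^\ast(\chi_a\circ\sigma)$, correctly cancelling the (invertible) action of $\beta\in E$ on $\wh{M}$, and — importantly — recalling that the surjectivity of $a\mapsto\chi_a$ onto $\wh{G}$, without which the converse would be false, is available only after the localization achieving $\lambda' = 0$.
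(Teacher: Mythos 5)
Your proof is correct and follows essentially the same route as the paper's: both reduce to an element lying over $1\in\Gamma_k$, write it as $\rho_\sigma^{-1}$ with $\sigma\in G(\bar{X})$, use the hypothesis to see that $\chi_a\circ\sigma$ is constant for every $a$, and then invoke the exactness (surjectivity of $a\mapsto\chi_a$ onto $\wh{G}$, available precisely because of the standing reduction $\lambda'=0$, which you rightly flag) together with duality over $\bar{k}$ to force $\sigma\in G(\bar{k})$. The only cosmetic differences are that the paper packages the reduction by introducing the subgroup $E'$ of all automorphisms satisfying the compatibility and checking exactness of $1\to G(\bar{k})\to E'\to\Gamma_k\to 1$, where you conjugate by a chosen $\beta\in E$ directly, and that it concludes via evaluation at points and the perfect pairing $G(\bar{k})\times\wh{G}\to\bar{k}^\times$ rather than your identification $\Mor_{\bar{k}}(\bar{X},\bar{G})\cong\Hom(\wh{G},\bar{k}[X]^\times)$.
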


\begin{proof} [End of proof of Theorem \ref{thm:Comparison}, modulo Lemma \ref{lemm:CharacterizationofE}]
	Let $Z$ be the $k$-torsor under $T$ defined by the cocycle $\tau$. This means $\bar{Z} = \bar{T}$ as a $\bar{k}$-variety, and the action $\cdot_{\tau}$ of $\Gamma_k$ on $\bar{Z}$ is given by $s \mapsto \rho_{\tau_s}^{-1} \circ s_{\#}$ (where $s_{\#}: T \to T$ is the usual Galois action). By \cite[Propositions 1.4.1, 1.4.2 and 1.4.3]{CTS1987Descente}, the class in $\Ext^1_k(\wh{T},\Gbb_m) \cong \H^1(k,T)$ of the extension 
	\begin{equation*}
		1 \to \bar{k}^\times \to \bar{k}[Z]^\times \to \wh{T} \to 0,
	\end{equation*}
	is $-[\tau]$, that is, the class of the top row of \eqref{eq:TMG}. This means  $\varpi^\ast: \bar{k}[Z] \to \bar{k}[X]$ is in fact a homomorphism of $k$-algebra (because it induces an isomorphism $\bar{k}[Z]^\times \to \bar{k}[X]^{\times}$ of $\Gamma_k$-modules thanks to the five lemma, and $\bar{k}[Z]^\times$ generates the $\bar{k}$-vector space $\bar{k}[Z]^\times$). In other words, $\varpi$ descends into a $k$-morphism $X \to Z$.
	
	Recall that our objective is to show the identity $\delta([\tau]) = -[E] \in \H^2(k,G)$. If $\mu: \Gamma_k \to M(\bar{k})$ is a cochain lifting $\tau$, then $\delta([\tau])$ is the class of the $2$-cocycle 
	\begin{equation} \label{eq:2CocycleMLiftingT}
		g: \Gamma_k \times \Gamma_k \to G(\bar{k}), \quad g_{s,t}:=\mu_s \tensor[^s]{\mu}{_t}\mu_{st}^{-1}.
	\end{equation}
	For each $s \in \Gamma_k$, let $\beta_s:=\rho_{\mu_s}^{-1} \circ s_{\#}$. Since $\mu_s$ is a lifting of $\tau_s$, one sees that $\beta_s \in \SAut_G(\bar{M}/Z)$ is $s$-semilinear. From diagram \eqref{eq:YMXT}, we obtain a morphism $\psi = (f,\pi): \bar{Y} \to \bar{X} \times_{\bar{Z}} \bar{M}$ of $\bar{X}$-torsors under $\bar{G}$, which, like any morphism of torsors, is an isomorphism. Let $\pr: \bar{X} \times_{\bar{Z}} \bar{M} \to \bar{M}$ denote the second projection, then $\pr \circ \psi = \pi$. Let
	\begin{equation*}
		\alpha_s := \psi^{-1} \circ (s_{\#} \times_{\bar{Z}} \beta_s) \circ \psi \in \SAut_G(\bar{Y}/X),
	\end{equation*}
	which is $s$-semilinear. For each $a \in \bar{k}[\bar{Y}]^\times_{\bar{G}}$, write it as $a = \pi^\ast b = b \circ \pi$ for a unique $b \in \bar{k}[M]^\times_{M}$ and let $\chi_b = [a] \in \wh{M} = \bar{k}[\bar{Y}]^\times_{\bar{G}}/\bar{k}^\times$ be the character of $M$ associated with $b$ (using the notations from diagram \eqref{eq:DefinitionOfM}). We have
	\begin{align*}
		\alpha_s \cdot a & = s_{\#} \circ b \circ \pi \circ \psi^{-1} \circ (s_{\#} \times_{\bar{Z}} \beta_s)^{-1} \circ \psi, & \text{by \eqref{eq:ActionOfSAut}}, \\
		& = s_{\#} \circ b \circ \pr \circ (s_{\#} \times_{\bar{Z}} \beta_s)^{-1} \circ \psi, & \text{since } \pr \circ \psi = \pi, \\
		& = s_{\#} \circ b \circ \beta_s^{-1} \circ \pr \circ \psi, & \text{since } \beta_s \circ \pr = \pr \circ (s_{\#} \times_{\bar{Z}} \beta_s), \\
		& = s_{\#} \circ b \circ s_{\#}^{-1} \circ \rho_{\mu_s} \circ \pi, & \text{since } \pr \circ \psi = \pi \text{ and } \beta_s = \rho_{\mu_s}^{-1} \circ s_{\#},\\
		& = \tensor[^s]{b}{} \circ \rho_{\mu_s} \circ \pi \\
		& = (\tensor[^s]{b}{} \chi_{\tensor[^s]{b}{}}(\mu_s)) \circ \pi, & \text{since } \tensor[^s]{b}{} \in \bar{k}[M]^\times_M, \\
		& = (\pi^\ast(\tensor[^s]{b}{}))\chi_{\tensor[^s]{b}{}}(\mu_s).
	\end{align*}
	It follows that $[\alpha_s \cdot a] = [\pi^\ast(\tensor[^s]{b}{})] = \chi_{\tensor[^s]{b}{}} = \tensor[^s]{\chi}{_b} = \tensor[^s]{[a]}{}$ in $\bar{k}[\bar{Y}]^\times_{\bar{G}}/\bar{k}^\times = \wh{M}$. By Lemma \ref{lemm:CharacterizationofE}, we have $\alpha_s \in E$. To conclude, we have constructed a continuous set-theoretic section $\alpha: \Gamma_k \to E$. Finally, for $s,t \in \Gamma_k$, one has
	\begin{equation*}
		\beta_{st} \circ \beta_t^{-1} \circ \beta_s^{-1} = \rho_{\mu_{st}}^{-1} \circ (st)_{\#} \circ t_{\#}^{-1} \circ \rho_{\mu_t} \circ s_{\#}^{-1} \circ \rho_{\mu_s} = \rho_{\mu_{st}}^{-1} \circ s_{\#} \circ t_{\#} \circ t_{\#}^{-1} \circ s_{\#}^{-1} \circ \rho_{\tensor[^s]{\mu}{_t}} \circ \rho_{\mu_s} = \rho_{g_{s,t}}
	\end{equation*}
	using \eqref{eq:2CocycleMLiftingT}. It follows that $\alpha_{st} \circ \alpha_t^{-1} \circ \alpha_s^{-1} = \rho_{g_{s,t}} = \rho_{g_{s,t}^{-1}}^{-1}$ since $\psi$ is a $\bar{G}$-equivariant $\bar{X}$-isomorphism. From this, we conclude that $[E] = -[g] = -\delta([\tau])$, which achieves the proof.
\end{proof}

To complete the proof of Theorem \ref{thm:Comparison}, it remains to prove Lemma \ref{lemm:CharacterizationofE}.

\begin{proof}[Proof of Lemma \ref{lemm:CharacterizationofE}]
    Let $E' \subseteq \SAut_G(\bar{Y}/X)$ be the subgroup consisting of $G$-semiequivariant automorphisms $\alpha$ for which $\tensor[^s]{[a]}{} = [\alpha \cdot a]$ in $\bar{k}[\bar{Y}]^\times_{\bar{G}}/\bar{k}^\times$ for all $a \in \bar{k}[\bar{Y}]^\times_{\bar{G}}$, where $s \in \Gamma_k$ denotes the image of $\alpha$. We have seen that $E \subseteq E'$. In order to show that $E = E'$, it suffices to check that the sequence $1 \to G(\bar{k}) \to E' \to \Gamma_k \to 1$	is exact. Of course, only exactness at the middle term $E'$ needs verification. Let $\alpha \in E'$ whose image in $\Gamma_k$ is $1$. Since the sequence $1 \to G(\bar{X}) \to \SAut_G(\bar{Y}/X) \to \Gamma_k \to 1$ is exact, $\alpha$ has the form $\alpha(y) = y \cdot \sigma(f(y))^{-1}$ for some $\bar{k}$-morphism $\sigma: \bar{X} \to \bar{G}$. By definition, $[\alpha \cdot a] = [a]$ in $\bar{k}[\bar{Y}]^\times_{\bar{G}}/\bar{k}^\times$ for all $a \in \bar{k}[\bar{Y}]^\times_{\bar{G}}$. Hence, we have a homomorphism
	\begin{equation*}
		c:\bar{k}[\bar{Y}]^\times_{\bar{G}} \to \bar{k}^\times, \quad c(a):=\frac{\alpha \cdot a}{a}.
	\end{equation*}
	Let $x \in X(\bar{k})$ and lift it to a point $y \in \bar{Y}(\bar{k})$. For any $a \in \bar{k}[\bar{Y}]^\times_{\bar{G}}$, one has 
	\begin{equation*}
		c(a) = \frac{(\alpha \cdot a)(y)}{a(y)} = \frac{a(\alpha^{-1}(y))}{a(y)} = \frac{a(y \cdot \sigma(x))}{a(y)} = \chi_a(\sigma(x)).
	\end{equation*}
	We have assumed that the map last arrow in \eqref{eq:Comparison11} takes $\Phi(\lambda)$ to $0$, hence we have an exact sequence 
	\begin{equation*}
		1 \to \bar{k}[X]^\times \to \bar{k}[\bar{Y}]^\times_{\bar{G}} \xrightarrow{a \mapsto \chi_a} \wh{G} \to 0
	\end{equation*}
	of abelian groups. By the above calculation, $c$ factors through a homomorphism $\bar{c}: \wh{G} \to \bar{k}^\times$ defined by $\bar{c}(\chi) = \chi(\sigma(x))$ for {\em any} $x \in X(\bar{k})$. Since the pairing $G(\bar{k}) \times \wh{G} \to \bar{k}^\times$ is perfect, there exists a unique $g \in G(\bar{k})$ such that $\chi(g) = \chi(\sigma(x))$ for all $x \in X(\bar{k})$ and $\chi \in \wh{G}$. Also by perfectness, this implies that $\sigma$ is the constant $\bar{k}$-morphism $x \mapsto g$. In other words, $\alpha = \rho_g^{-1}$, the element of $\Aut_{\bar{G}}(\bar{Y}/\bar{X})$ corresponding to $g \in G(\bar{k})$. The lemma is proved.
\end{proof}

\bibliographystyle{alpha}
\bibliography{ref}
\end{document}